\title[Infinitely supported $\mathcal{D}_\mu$ spaces which are $\mathcal{H}(b)$]{Infinitely supported harmonically weighted Dirichlet spaces which are de Branges Rovnyak spaces}
\author{C. Bellavita} %Carlo Bellavita
\email{carlobellavita@ub.edu}
\address{Departament of Matematica i Informatica, Universitat de Barcelona, Gran Via 585, 08007 Barcelona, Spain.}
\author{E. Dellepiane} %Eugenio Dellepiane
\email{eugenio.dellepiane@polito.it}
\address{Dipartimento di Scienze Matematiche “Giuseppe Luigi Lagrange”, Politecnico di Torino, Corso Duca degli Abruzzi 24, 10129 Torino, Italy}
\author{A. Hartmann} %Andreas Hartmann
\email{andreas.hartmann@math.u-bordeaux.fr}
\address{Université de Bordeaux, CNRS, Bordeaux INP, IMB, UMR 5251, F-33400 Talence, France}
\author{J. Mashreghi} %Javad Mashreghi
\email{javad.mashreghi@mat.ulaval.ca}
\address{D\'epartement de math\'ematiques et de statistique, Universit\'e Laval, Qu\'ebec, QC, Canada G1V 0A4.}
\subjclass[2020]{Primary: 30H45, 30H99; Secondary: 30J05, 28A12}
\keywords{de Branges--Rovnyak spaces, Harmonically weighted Dirichlet spaces, Clark measures, Cauchy transform, Brown-Shields conjecture}
\date{\today}
\newtheorem{theorem}{Theorem}[section]
\newtheorem{corollary}[theorem]{Corollary}
\newtheorem{lemma}[theorem]{Lemma}
\newtheorem{proposition}[theorem]{Proposition}
\newtheorem{open}[theorem]{Open problem}
\numberwithin{equation}{section}
\theoremstyle{remark}
\theoremstyle{remark}
\let\oldequation\equation
\let\endoldequation\endequation
\renewenvironment{equation}
  {\refstepcounter{theorem}\oldequation}
  {\endoldequation}
\pretocmd{\align}{\refstepcounter{theorem}}{}{}
\pretocmd{\eqnarray}{\refstepcounter{theorem}}{}{}
\begin{document}

\begin{abstract}
Harmonically weighted Dirichlet spaces $\mathcal{D}_\mu$ and de Branges--Rovnyak spaces $\mathcal{H}(b)$ are two fundamental structures in analytic function theory exhibiting rich and often complementary properties. The question of when these spaces coincide, first raised and solved in Sarason’s groundbreaking work in 1997 when $\mu$ is a single Dirac mass, is thus of fundamental importance in operator theory and analytic function spaces. In this paper, we focus on spaces $\mathcal{H}(b)$ with symbol $b = (1+u)/2$, where $u$ is a one-component inner function. While previous results extended Sarason's work to finitely supported measures $\mu$, the symbols we consider here give a natural framework to go beyond finiteness of the support. In our setting, we provide a complete characterization of measures $\mu$ for which $\mathcal{H}(b) = \mathcal{D}_\mu$, thereby resolving the long-standing open problem of constructing harmonically weighted Dirichlet spaces $\mathcal{D}_\mu$ associated with measures $\mu$ of infinite support that are also $\mathcal{H}(b)$ spaces.
%, which extends the theory beyond the discrete cases traditionally considered. 
As a central ingredient to prove this result and which is of independent interest, we establish 
%the boundedness of 
a $T(1)$-type result for the Cauchy transform on $L^2(\sigma)$, where $\sigma$ denotes the Clark measure associated with a one-component inner function $u$. Another notable result is a perturbation theorem for one-component inner functions that allows us to present a large class of function spaces satisfying $\mathcal{H}(b)=\mathcal{D}_\mu$. Furthermore, we settle the Brown--Shields conjecture within this setting.

\end{abstract}

\begin{comment}
\begin{abstract}
In analytic function theory, the de Branges--Rovnyak spaces $\mathcal{H}(b)$ and the harmonically weighted Dirichlet spaces $\mathcal{D}_\mu$ are two fundamental analytic structures  exhibiting rich and often complementary behaviors. Hence, the question of when these spaces coincide, first raised in Sarason’s groundbreaking work, has since evolved into a central theme in operator theory and function spaces. In this paper, we focus on the class of $\mathcal{H}(b)$ spaces with symbol $b = (1+u)/2$, where $u$ is a one-component inner function. While previous results extended Sarason’s work to finitely supported measures $\mu$, the symbols we consider here give
a natural framework to go beyond finiteness of the support. In our setting, we provide a complete characterization of measures $\mu$ for which $\mathcal{H}(b) = \mathcal{D}_\mu$, thereby uncovering a new and natural class of such equalities. In doing so, we resolve a long-standing open problem by constructing harmonically weighted Dirichlet spaces $\mathcal{D}_\mu$ associated with measures $\mu$ of infinite support that are also $\mathcal{H}(b)$ spaces, extending the theory beyond the discrete cases traditionally considered. As a central ingredient to prove this result and which is of independent interest, we establish the boundedness of the Cauchy transform on $L^2(\sigma)$, where $\sigma$ denotes the Clark measure associated with a one-component inner function $u$. Furthermore, we settle the Brown--Shields conjecture within this setting.
\end{abstract}
\end{comment}
\maketitle

\section{Introduction}
In this article, we investigate two important classes of Hilbert spaces of analytic functions on the unit disk $\mathbb{D}$: the de Branges--Rovnyak spaces $\mathcal{H}(b)$ and the harmonically weighted Dirichlet spaces $\mathcal{D}_\mu$. The former class, introduced by de Branges and Rovnyak in 1966 \cite{Debrangesrovnyak}, generalizes the orthogonal complement of the range of the multiplication operator $M_b$. The latter was introduced by Richter in 1991 \cite{Richter1991} in the context of his characterization of analytic cyclic 2-isometries. The weighted Dirichlet spaces also play a central role in the study of closed subspaces of the classical Dirichlet space $\mathcal{D}$ that are invariant under the forward shift operator \cite{RichterSundberg1991}.

These two classes of function spaces offer complementary perspectives. The de Branges--Rovnyak spaces provide explicit formulas for their reproducing kernels, but their norms are generally difficult to compute. In contrast, the harmonically weighted Dirichlet spaces often lack explicit kernel representations, yet their norms are described by concrete integral formulas. In 1997, Sarason discovered that the local Dirichlet space $\mathcal{D}_\zeta$ is, in fact, a de Branges--Rovnyak space \cite{Sarason1997LocalDS}. This finding sparked further exploration into the connections between these two classes of spaces. Subsequently, in 2010, Chevrot, Guillot, and Ransford demonstrated that Sarason’s example is the only case where the equality $\mathcal{H}(b) = \mathcal{D}_\mu$ holds with equality of norms \cite{Chevrot2010}. However, in 2013, Costara and Ransford showed that these spaces may still coincide as sets even when their norms are merely equivalent \cite{costara2013}. In particular, they established various sufficient and necessary conditions for the identity $\mathcal{H}(b) = \mathcal{D}_\mu$, with special attention to the case where $b$ is a rational function and $\mu$ is a {\em finitely} supported measure.

Subsequent research has examined several related questions. The case where $\mu$ is finitely supported was further studied in \cite{Eugenionext} and \cite{Lanucha2017}. Connections between higher-order local Dirichlet spaces and de Branges--Rovnyak spaces have been explored in \cite{hartmann2025}, \cite{LGR} and \cite{anucha2023DeBS}. The relationship between de Branges--Rovnyak spaces and subharmonically weighted Dirichlet spaces was analyzed in \cite{ElFallah2015DirichletSW} and \cite{Pouliasis2024WeightedDS}. However, no significant advances in the identification of de Branges--Rovnyak spaces with harmonically weighted Dirichlet spaces have emerged since 2013. In this article, we study the de Branges--Rovnyak spaces $\mathcal{H}(b)$ in the special case where $b = (1+u)/2$ and $u$ is an inner function. This is a natural framework to consider infinitely supported measures $\mu$, since then the Pythagorean mate $a=\gamma (1-u)/2$ (see \eqref{E:def-de-a}) vanishes at all the Clark points, and these points constitute an infinite set in our situation. We provide a complete characterization of those one-component inner functions $u$ for which the space $\mathcal{H}(b)$ coincides with a harmonically weighted Dirichlet space $\mathcal{D}_\mu$, for some appropriate measure $\mu$. A striking and novel feature of our result is that the measure $\mu$ may have {\em infinitely many} atoms, a phenomenon observed here for the first time for this equality of spaces. %In Section \ref{S:Main-resulats}, we present three main results. However, additional novel findings are distributed throughout the subsequent sections.

A central ingredient in our arguments is Bessonov’s characterization of one component inner functions based on the associated Clark measure \cite{bessonov2015duality}. Besides some natural conditions, his description involves a $T(1)$-condition, and another feature of our work is to deduce the boundedness of the Cauchy transform on $L^2(\sigma)$ when $\sigma$ is the Clark measure of a one-component inner function. In view of Bessonov’s result this can be reinterpreted as a $T(1)$-type result.

We conclude this section by clarifying the notations used throughout the paper. For a measurable set $M \subset \mathbb{T}=\partial\mathbb{D}$, we denote by $\chi_M$ its characteristic function and by $|M|$ its Lebesgue measure. In particular, for an arc $I \subset \mathbb{T}$, $|I|$ refers to its arc-length. When integrating with respect to Lebesgue measure, we write $\operatorname{d}\!m$. The norm of a function $f$ in a Banach space $X$ is denoted by $\|f\|_X$. For an operator $T$ on $X$, $\|T\|$ denotes its operator norm. Finally, the notation $f \lesssim g$ (or $g \gtrsim f$) means there exists a constant $C > 0$, independent of $f$ and $g$, such that $f \leq Cg$. If both $f \lesssim g$ and $f \gtrsim g$ hold, we write $f \asymp g$.

\section{Main results} \label{S:Main-resulats}
We now present the main results of this article. To this end, we begin with some preliminary definitions. However, rigorous definitions and additional properties of the objects involved will be given in the subsequent sections. Further novel results appear throughout the paper, either as propositions or as theorems. In this section, we restrict ourselves to highlighting the principal achievements.

Let $u$ be an inner function, i.e., a bounded analytic function in $H^\infty$ that is unimodular almost everywhere on the unit circle $\mathbb{T}$. Its Clark measure $\sigma^\alpha$, associated with the parameter $\alpha \in [0,1)$, is the unique non-negative finite singular Borel measure on $\mathbb{T}$ satisfying
\[
\frac{1 - |u(z)|^2}{|e^{2\pi i \alpha} - u(z)|^2} := \int_{\mathbb{T}} \frac{1 - |z|^2}{|\xi - z|^2} \, \operatorname{d}\!\sigma^\alpha(\xi), \qquad z \in \mathbb{D}.
\]
For simplicity, we write $\sigma = \sigma^0$. If $\sigma^\alpha$ has the form
\[
\sigma^\alpha = \sum_{n} \sigma_{\zeta_n}^\alpha \, \delta_{\zeta_n},
\]
where $\zeta_n \in \mathbb{T}$ and $\sigma_{\zeta_n}^\alpha > 0$, we say that $\sigma^\alpha$ is {\em discrete}. Moreover, in this case, $u$ admits a non-tangential angular derivative in the sense of Carath\'{e}odory at each $\zeta_n$, with boundary value $u(\zeta_n) = e^{2\pi i \alpha}$. The mass at the point $\zeta_n$, which we call an \emph{atom} of $\sigma$, is given by
\[
\sigma_{\zeta_n}^\alpha = \frac{1}{|u'(\zeta_n)|}.
\]
When there is no ambiguity, we simply write $\sigma_n^\alpha = \sigma_{\zeta_n}^\alpha$. For comprehensive introductions to Clark measures, we refer to \cite[Chapter 9]{cima2006cauchy}, \cite{Ross2013LENSLO}, and \cite{saksman2007}. Given a measure $\mu$ on $\mathbb{T}$, we define the associated potential by
\begin{equation} \label{E:potentialdef}
V_\mu(z) := \int_{\mathbb{T}} \frac{1}{|z - \xi|^2} \, \operatorname{d}\!\mu(\xi), \qquad z \in \mathbb{C}.
\end{equation}
Thus, we may consider the potentials associated with Clark measures.

We focus on inner functions $u$ that are \emph{one-component}, meaning that there exists an $\epsilon \in (0,1)$ such that the sublevel set
\[
\Omega(u, \epsilon) := \left\lbrace z \in \mathbb{D} \colon |u(z)| < \epsilon \right\rbrace
\]
is connected. The notion of one-component inner functions was introduced by Cohn \cite{cohn} in relation to Carleson embeddings of model spaces. These functions have since been studied extensively, see, for example, \cite{Aleksandrov2000, BARANOV2005116, Bellavita2022O, bessonov2015duality, cimamortini, nicolau2021}. In this context, the function
\[
b(z) := \frac{1 + u(z)}{2}, \qquad z \in \mathbb{D},
\]
is a non-extreme point of the closed unit ball of $H^\infty$, which is equivalent to $\int_\mathbb{T}\log(1-|b|)>-\infty$. For such a function $b$, there exists a so-called \emph{Pythagorean mate} $a$, i.e., a unique outer function satisfying $a(0) > 0$ and
\[
|b|^2 + |a|^2 = 1 \qquad \text{a.e. on } \mathbb{T}.
\]
In this case, it follows immediately that
\begin{equation} \label{E:def-de-a}
a(z) = \gamma \frac{1 - u(z)}{2}, \qquad z \in \mathbb{D},
\end{equation}
where $\gamma \in \mathbb{T}$ is a constant such that $(1 - u(0)) \gamma > 0$.

Our first main result is the following.

\begin{theorem} \label{T:main1}
Let $u$ be a one-component inner function and let $b=(1+u)/2$ with its Pythagorean mate $a$ given by \eqref{E:def-de-a}. Let $\{\zeta_n\}_n \subset \mathbb{T}$ be the points where $u(\zeta_n)=1$. Then
\[
\mathcal{H}(b)=\mathcal{D}_\mu
\]
if and only if $\mu$ is discrete with explicit expression
\begin{equation}\label{E:main1expressionmu}
 \mu = \sum_{n}\mu_n\delta_{\zeta_n},
\end{equation}
where its masses satisfy
\begin{equation} \label{E:main1atoms}
\frac{1}{C|u'(\zeta_n)|^2}\leq \mu_n \leq \frac{C}{|u'(\zeta_n)|^2}
\end{equation}
for a positive constant $C$, and its potential fulfills
\begin{equation}\label{E:main1potential}
\sup_{z \in \mathbb{D}} |a(z)|^2\, V_\mu(z) <\infty.
\end{equation}
\end{theorem}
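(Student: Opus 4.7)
The plan is to combine the Costara--Ransford framework, which characterizes $\mathcal{H}(b)=\mathcal{D}_\mu$ through boundedness of certain operators adapted to the Sarason function of $b$, with the $T(1)$-type boundedness of the Cauchy transform on $L^2(\sigma)$ that is flagged as a central ingredient of the paper. I would prove the two implications separately and exploit the self-dual nature of $b=(1+u)/2$, where $u$ itself acts as the inner function whose Clark measure $\sigma$ sits at the $\zeta_n$.

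\smallskip

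\textbf{Necessity.} Suppose $\mathcal{H}(b)=\mathcal{D}_\mu$ with equivalent norms. I would first show $\mu$ must be singular: $\mathcal{H}(b)$ contains $H^\infty$ (since $b$ is non-extreme), so any absolutely continuous part of $\mu$ would impose regularity beyond $H^2$ and force a contradiction with $H^\infty\subset\mathcal{D}_\mu$. Next, I would pin down $\operatorname{supp}(\mu)\subset\{\zeta_n\}$: outside the zero set of $a$ on $\mathbb T$, elements of $\mathcal{H}(b)$ can be constructed to violate any local Dirichlet-type constraint at a chosen boundary point, so $\mu$ cannot place mass away from the $\zeta_n$. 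To exclude a continuous singular part supported on $\{\zeta_n\}$'s closure, I would invoke the Clark-model identification of $\mathcal{H}(b)$ with a space built on $L^2(\sigma)$, which, since $\sigma$ is purely atomic on $\{\zeta_n\}$, forces $\mu$ to be atomic as well. The mass comparison \eqref{E:main1atoms} then comes from testing norm equivalence on the reproducing kernels $k_\lambda^b$ and $k_\lambda^\mu$ as $\lambda\to\zeta_n$ non-tangentially: the $\mathcal{H}(b)$-side blows up like $|u'(\zeta_n)|$ while the $\mathcal{D}_\mu$-side blows up like $1/\mu_n$, pinning the product. The potential bound \eqref{E:main1potential} is the global form of the same reproducing-kernel comparison, integrated over all $\lambda\in\mathbb D$.

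\smallskip

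\textbf{Sufficiency.} Given a $\mu$ of the form \eqref{E:main1expressionmu} satisfying \eqref{E:main1atoms} and \eqref{E:main1potential}, I would establish two-sided estimates $\|f\|_{\mathcal{H}(b)}\asymp\|f\|_{\mathcal{D}_\mu}$. Using the Clark unitary, every $f\in\mathcal{H}(b)$ is the Cauchy transform $C_\sigma g$ of some $g\in L^2(\sigma)$ with $\|f\|_{\mathcal{H}(b)}=\|g\|_{L^2(\sigma)}$. Expanding $\|f\|_{\mathcal{D}_\mu}^2=\|f\|_{H^2}^2+\sum_n\mu_n D_{\zeta_n}(f)$ and expressing each local Dirichlet integral in terms of the Cauchy behavior of $g$ at $\zeta_n$ reduces the matter to comparing two $\ell^2$-type sums whose weights, by \eqref{E:main1atoms}, differ from the Clark weights $\sigma_n=1/|u'(\zeta_n)|$ by a bounded factor. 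The boundedness of the Cauchy transform on $L^2(\sigma)$ provides the quantitative transfer, while \eqref{E:main1potential} is exactly the Carleson-type condition needed to aggregate the infinite family of local contributions uniformly in $f$.

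\smallskip

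\textbf{Main obstacle.} The delicate part is sufficiency in the infinite-support regime, specifically the inclusion $\mathcal{H}(b)\hookrightarrow\mathcal{D}_\mu$. For one Clark point this is Sarason's theorem, and for finitely many points one can iterate Costara--Ransford; but passing to infinitely many $\zeta_n$ requires a uniform Carleson-type bound on the sum of local Dirichlet integrals, which cannot be obtained point by point. Bridging the analytic de Branges--Rovnyak side and the geometric potential-theoretic side at infinitely many atoms simultaneously is precisely what the $T(1)$-boundedness of the Cauchy transform on $L^2(\sigma)$, combined with the potential condition \eqref{E:main1potential}, is tailored to do, and calibrating these two ingredients so that the resulting constants are independent of the atoms of $\mu$ is the technical core of the argument.
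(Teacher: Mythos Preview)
Your proposal has the right high-level shape but contains two genuine gaps.

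First, in the sufficiency direction you write that ``every $f\in\mathcal{H}(b)$ is the Cauchy transform $C_\sigma g$ of some $g\in L^2(\sigma)$ with $\|f\|_{\mathcal{H}(b)}=\|g\|_{L^2(\sigma)}$''. This is false: the Clark unitary identifies $L^2(\sigma)$ with the model space $K_u$, not with $\mathcal{H}(b)$. For $b=(1+u)/2$ the space $\mathcal{H}(b)$ is strictly larger; the paper uses the Fricain--Grivaux decomposition $\mathcal{H}(b)=K_u\oplus_b(1-u)H^2$ and handles the two summands separately. The $(1-u)H^2$ piece embeds into $\mathcal{D}_\mu$ by a direct computation using \eqref{E:main1potential}, while the $K_u$ piece requires expanding $f$ in the Clark basis, applying the difference-quotient operator $Q^u_{\zeta_n}$, and then the Cauchy-transform boundedness. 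Your scheme ignores the $(1-u)H^2$ component entirely, so as written it cannot reach all of $\mathcal{H}(b)$.

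Second, you give no mechanism for the reverse embedding $\mathcal{D}_\mu\hookrightarrow\mathcal{H}(b)$. Your sufficiency paragraph only addresses $\mathcal{H}(b)\hookrightarrow\mathcal{D}_\mu$, and ``two-sided estimates'' do not fall out symmetrically from the Cauchy transform argument. In the paper this direction is obtained by an entirely different route: the Malman--Seco duality criterion reduces it to boundedness of $M_{b/a}:H^2\to E(\mu)$ (the Cauchy dual of $\mathcal{D}_\mu$), which is then verified using the Volterra operator $T_{\log(1-u)}$ and the fact that $\log(1-u)\in\mathrm{BMOA}$. The relevant hypothesis here is the \emph{lower} bound $\inf_\mathbb{D}|a|^2V_\mu>0$, which the paper shows follows from the lower estimate in \eqref{E:main1atoms}; your outline never isolates this ingredient.

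A smaller error: $\mathcal{H}(b)$ does not contain $H^\infty$ in general (non-extremeness gives density of polynomials, not containment of $H^\infty$), so your necessity argument for ruling out an absolutely continuous part does not work as stated. The paper instead shows $\operatorname{supp}(\mu)=\overline{\mathcal{Z}(a)}$ directly from the two-sided potential estimate $|a|^2V_\mu\asymp 1$ (Lemma~\ref{L:Costara}), and then uses $\mu(\rho(b))=0$ together with the one-component structure to force $\mu$ to be purely atomic on the Clark points.
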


In the case where $u$ is a finite Blaschke product, the function $b$ is rational, and Theorem \ref{T:main1} reduces to Theorem 4.1 of \cite{costara2013}; see also \cite{Eugenionext} for further details. We note that the upper estimate in \eqref{E:main1atoms} is actually redundant, as it follows directly from \eqref{E:main1potential}. Moreover, Lemma \ref{L:inf1c} implies that under the condition \eqref{E:main1atoms}, a lower bound holds in \eqref{E:main1potential}: specifically, $\inf_{\mathbb{D}}|a|^2 V_\mu > 0$. Nonetheless, we choose to present the theorem in this form to emphasize the constraints imposed on the weight coefficients. A key feature of Theorem \ref{T:main1} is that the support of the measure $\mu$ coincides with the support of the Clark measure $\sigma$, although the respective masses may differ within a specified range, essentially behaving like the squares of the Clark masses. A particularly important instance where Theorem \ref{T:main1} applies is the case of a singular inner function supported at a single point, that will be discussed in Subsection \ref{Subsection-example}. %{S:examples}. 

Additionally, we establish the following perturbation result for one-component inner functions, in the spirit of stability results on orthonormal or Riesz bases for model spaces (see \cite{Baranovstability}). This result appears to be new and of independent interest. The existence of the constants $A_\sigma$ and $B_\sigma$ appearing in the statement below is ensured by Bessonov’s characterization of one-component inner functions (see Theorem \ref{T:Bessonov}).

\begin{theorem}\label{T:perturbation}
Let $u$ be a one-component inner function, let $\{\zeta_n\}_n$ be its Clark atoms, $\sigma$ its Clark measure, and $A_\sigma,B_\sigma$ be constants for which
\[
A_\sigma \max\left(|\zeta_n-\zeta_n^{+}|, |\zeta_n-\zeta_n^{-}|\right) \leq \sigma_n\leq B_\sigma \min\left(|\zeta_n-\zeta_n^{+}|, |\zeta_n-\zeta_n^{-}|\right).
\]
Fix a sequence of positive numbers $\alpha=(\alpha_n)_n\in \ell^\infty$ such that
\begin{enumerate}[(i)]
\item \label{E:pertubation1} $\|\alpha\|_{\infty} \leq\min\{(3B_\sigma)^{-1},A_\sigma/3B_\sigma^2, 1/2\}$,
\item \label{E:pertubation2} and
\[
\sup_{n} \sum_{m\neq n}\frac{\sigma_m\alpha_m}{|\zeta_n-\zeta_m|}<\infty.
\]
\end{enumerate}
Then for every sequence of points $t_n\in\mathbb{T}$ such that $|t_n-\zeta_n|\leq \sigma_n \alpha_n$ and positive numbers $\lambda_n=\sigma_n + \epsilon_n$, with $|\epsilon_n|\leq\sigma_n \alpha_n$, the measure
\[
\lambda=\sum_{n}\lambda_n \delta_{t_n}
\]
is the Clark measure associated to a one-component inner function $\theta$.
\end{theorem}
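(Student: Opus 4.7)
The plan is to verify both parts of Bessonov's characterization (Theorem~\ref{T:Bessonov}) for the perturbed measure $\lambda$: this will force $\lambda$ to be the Clark measure of a one-component inner function $\theta$. Since $\|\alpha\|_\infty \leq 1/2$, the coefficients satisfy $\lambda_n \in [\sigma_n/2,\,3\sigma_n/2]$, so $\lambda$ is a positive, finite, singular Borel measure on $\mathbb{T}$. By the Aleksandrov--Clark correspondence, there is a (unique, normalized) inner function $\theta$ whose Clark measure is $\lambda$, and it remains to show that $\theta$ is one-component.

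For the geometric (mass--distance) condition, the plan is to use $\|\alpha\|_\infty \leq (3B_\sigma)^{-1}$ together with the Bessonov upper bound $\sigma_n \leq B_\sigma \min(|\zeta_n-\zeta_n^{+}|,\,|\zeta_n-\zeta_n^{-}|)$ to deduce $|t_n-\zeta_n| \leq \tfrac{1}{3}\min(|\zeta_n-\zeta_n^{+}|,\,|\zeta_n-\zeta_n^{-}|)$, with the same estimate for the neighbors. This keeps the cyclic ordering of the atoms on $\mathbb{T}$ intact---so that $t_n^{\pm}$ is precisely the perturbation of $\zeta_n^{\pm}$---and yields $|t_n-t_n^{\pm}| \asymp |\zeta_n-\zeta_n^{\pm}|$. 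Combined with $\lambda_n \asymp \sigma_n$, one extracts explicit constants $A_\lambda, B_\lambda$ in terms of $A_\sigma, B_\sigma$ (using the full strength of (i), including $\|\alpha\|_\infty \leq A_\sigma/(3B_\sigma^2)$) satisfying the Bessonov mass--distance inequality for $\lambda$.

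The main difficulty is the analytic ($T(1)$-type) condition, namely boundedness of the Cauchy transform $\mathcal{C}_\lambda$ on $L^2(\lambda)$. The plan is to compare $\mathcal{C}_\lambda$ with $\mathcal{C}_\sigma$, whose boundedness on $L^2(\sigma)$ is the $T(1)$-type result established earlier in the paper for one-component inner functions. The identification $V \colon L^2(\lambda)\to L^2(\sigma)$, $(Vf)(\zeta_n) := f(t_n)$, is a bounded isomorphism (because $\lambda_n \asymp \sigma_n$), so $\mathcal{C}_\lambda$ is bounded on $L^2(\lambda)$ if and only if $V\mathcal{C}_\lambda V^{-1}$ is bounded on $L^2(\sigma)$. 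Setting $\delta_n := \zeta_n - t_n$ and $r_m := \lambda_m/\sigma_m$, an elementary algebraic manipulation splits
\[
V\mathcal{C}_\lambda V^{-1} - \mathcal{C}_\sigma \;=\; \mathcal{C}_\sigma\circ M_{r-1} \;+\; E,
\]
where $M_{r-1}$ is diagonal multiplication by $(r_m-1)$ with $\|r-1\|_\infty \leq \|\alpha\|_\infty$, and $E$ has kernel $\sigma_m r_m(\delta_n-\delta_m)/[(\zeta_n-\zeta_m)(t_n-t_m)]$. The first summand has norm at most $\|\mathcal{C}_\sigma\|\cdot\|\alpha\|_\infty$. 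For $E$, the plan is Schur's test on $L^2(\sigma)$ with the constant weight $w \equiv 1$: the kernel is controlled in absolute value by $\sigma_m(\sigma_n\alpha_n+\sigma_m\alpha_m)/|\zeta_n-\zeta_m|^2$, and the resulting row/column sums split into two families. Sums of the form $\sigma_n\alpha_n \sum_{m\neq n}\sigma_m/|\zeta_n-\zeta_m|^2$ are handled by the geometric estimate $\sum_{m\neq n}\sigma_m/|\zeta_n-\zeta_m|^2 \lesssim 1/\sigma_n$, which is a consequence of Bessonov's mass--distance property for $\sigma$ through a telescoping argument based on $\sigma_m \asymp |\zeta_m-\zeta_m^{+}|$. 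Sums of the form $\sum_{m\neq n}\sigma_m^2\alpha_m/|\zeta_n-\zeta_m|^2$ are dominated, via $\sigma_m \lesssim |\zeta_n-\zeta_m|$ (again Bessonov for $\sigma$), by $\sum_{m\neq n}\sigma_m\alpha_m/|\zeta_n-\zeta_m|$, which is exactly the uniform bound supplied by hypothesis (ii). The main obstacle is this Schur estimate for $E$: the smallness in (i) and the summability in (ii) are precisely calibrated to close it. Once $E$ is bounded, $\mathcal{C}_\lambda$ is bounded on $L^2(\lambda)$, verifying Bessonov's $T(1)$-condition for $\lambda$ and completing the proof.
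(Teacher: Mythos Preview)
Your plan misidentifies Bessonov's analytic condition. In Theorem~\ref{T:Bessonov}, item~(v) asks that the \emph{function} $\mathcal{C}_\lambda 1$ be uniformly bounded on the atoms $a(\lambda)$, i.e.\ $\sup_n |\mathcal{C}_\lambda 1(t_n)|<\infty$; it does \emph{not} ask for $L^2(\lambda)$-boundedness of the operator $\mathcal{C}_\lambda$. Proving $\|\mathcal{C}_\lambda\|_{L^2(\lambda)\to L^2(\lambda)}<\infty$ does not, by itself, yield the required pointwise bound on $\mathcal{C}_\lambda 1$, and whether $L^2$-boundedness of $\mathcal{C}_\sigma$ characterizes one-component inner functions is in fact posed as an open problem in the paper. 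So as written, the final step ``$\mathcal{C}_\lambda$ bounded on $L^2(\lambda)$ $\Rightarrow$ $\theta$ one-component'' does not close.

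The good news is that your estimates are almost exactly the right ones, just aimed at the wrong target. The paper argues directly on $\mathcal{C}_\lambda 1(t_n)$: it writes
\[
\mathcal{C}_\lambda 1(t_n)=\sum_{m\neq n}\frac{\lambda_m}{\zeta_m-\zeta_n}\;+\;\sum_{m\neq n}\Bigl(\frac{\lambda_m}{t_m-t_n}-\frac{\lambda_m}{\zeta_m-\zeta_n}\Bigr),
\]
splits $\lambda_m=\sigma_m+\epsilon_m$ in the first sum to invoke the $L^\infty$-bound on $\mathcal{C}_\sigma 1$ (Bessonov for $\sigma$) plus hypothesis~(ii), and handles the second sum by the kernel difference estimate $\lambda_m(|\zeta_m-t_m|+|\zeta_n-t_n|)/(|t_m-t_n|\,|\zeta_m-\zeta_n|)$ together with $|t_m-t_n|\asymp|\zeta_m-\zeta_n|$, Lemma~\ref{L:integralestimate}, and hypothesis~(ii). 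These are precisely the two families of sums appearing in your Schur test. The fix is therefore to redirect your decomposition toward bounding $\sup_n|\mathcal{C}_\lambda 1(t_n)|$: in your notation, bound $(\mathcal{C}_\sigma\circ M_{r-1})1$ pointwise via $|r_m-1|\leq\alpha_m$ and hypothesis~(ii), rather than by the operator norm $\|\mathcal{C}_\sigma\|\cdot\|\alpha\|_\infty$, and use your Schur row-sum estimates to bound $(E1)(\zeta_n)$ pointwise. Your geometric part matches the paper's.
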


As a consequence of the previous theorem and the example in Subsection \ref{Subsection-example}, we construct a whole family of examples for which the identity $\mathcal{H}(b) = \mathcal{D}_\mu$ holds and, more importantly, the measure $\mu$ possesses infinitely many atoms.

In the proof of Theorem \ref{T:main1}, a central role is played by the (truncated) Cauchy transform defined by
\begin{equation}\label{E:Cauchydef}
\mathcal{C}_\sigma f(\zeta) := \int_{\mathbb{T}\setminus\{\zeta\}}\frac{f(s)}{1-\overline{s}\zeta}\operatorname{d}\!\sigma(s),\qquad \zeta\in\mathbb{T},
\end{equation}
which acts from $L^2(\sigma)$ to itself. For necessary and sufficient conditions regarding the boundedness of $\mathcal{C}_\sigma$, as well as related problems, we refer the reader to \cite{LaceyII, Lacey2014, Ryan2025, Tolsa2001} and the references therein. In our setting, leveraging a result of Tolsa from \cite{Tolsa2001}, we establish that $\mathcal{C}_\sigma$ is indeed bounded when $\sigma$ is the Clark measure associated with a one-component inner function.

\begin{theorem}\label{T:main2}
Let $u$ be a one-component inner function, and let $\sigma$ be its Clark measure. Then the Cauchy transform
\[\mathcal{C}_\sigma f(\zeta)= \int_{\mathbb{T}\setminus\{\zeta\}}\frac{f(s)}{1-\overline{s}\zeta}\operatorname{d}\!\sigma(s), \qquad\zeta\in\mathbb{T},\]
is bounded on $L^2(\sigma)$.
\end{theorem}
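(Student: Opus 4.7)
The plan is to apply the $T(1)$ theorem for non-doubling measures due to Tolsa \cite{Tolsa2001}, which asserts that the Cauchy transform $\mathcal{C}_\mu$ is bounded on $L^2(\mu)$ as soon as $\mu$ has linear growth and a $T(1)$-type local $L^2$ estimate holds on arcs. My task is therefore to verify these two hypotheses for the Clark measure $\sigma$ of a one-component inner function.

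For the linear growth, I will appeal to Bessonov's characterization (Theorem \ref{T:Bessonov}), which provides constants $A_\sigma,B_\sigma$ and the two-sided comparison
\[
A_\sigma \max(|\zeta_n - \zeta_n^+|, |\zeta_n - \zeta_n^-|) \leq \sigma_n \leq B_\sigma \min(|\zeta_n - \zeta_n^+|, |\zeta_n - \zeta_n^-|).
\]
For any arc $I \subset \mathbb{T}$, summing the upper bound over those atoms $\zeta_n \in I$ and using the disjointness of the adjacent gaps $[\zeta_n, \zeta_n^+]$ yields $\sigma(I) \lesssim |I|$. Since $\sigma$ lives on $\mathbb{T}$, this is equivalent to the Euclidean linear growth $\sigma(B(z,r)) \lesssim r$, which is the first hypothesis of Tolsa's theorem.

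The second, and more delicate, hypothesis is the uniform local $L^2$ bound
\[
\int_I |\mathcal{C}_{\sigma,\varepsilon}(\chi_I)|^2 \operatorname{d}\!\sigma \leq C\, \sigma(I), \qquad I \subset \mathbb{T} \text{ an arc}, \ \varepsilon > 0.
\]
Here the key input is the $T(1)$-clause built into Bessonov's characterization of one-component inner functions (see \cite{bessonov2015duality}), which is phrased precisely so that, after unwinding the definitions, it supplies the required estimate --- essentially amounting to $\mathcal{C}_\sigma(1) \in \mathrm{BMO}(\sigma)$, from which the local $L^2$-bound above follows via John--Nirenberg together with the linear growth just established.

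The principal obstacle is this last translation: bridging Bessonov's intrinsic, atom-based formulation to the exact form required by Tolsa's hypothesis, while also handling the principal-value truncation in the definition of $\mathcal{C}_\sigma$ (each atom sits on the integration domain and must be excised uniformly in $\varepsilon$). Once this matching is in place, the conclusion follows directly from Tolsa's theorem applied to the Radon measure $\sigma$ on $\mathbb{C}$.
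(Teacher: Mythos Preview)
Your plan has a genuine gap at precisely the step you flag as ``the principal obstacle'' --- and you never actually overcome it. Bessonov's condition (v) gives only that $\mathcal{C}_\sigma 1$ is uniformly bounded on the atoms $a(\sigma)$; passing from this to the local testing estimate $\int_I |\mathcal{C}_\sigma \chi_I|^2\,\operatorname{d}\!\sigma \lesssim \sigma(I)$ for \emph{all} arcs $I$ (or equivalently to $\mathcal{C}_\sigma 1 \in \mathrm{BMO}(\sigma)$ in an appropriate sense) is not a matter of ``unwinding the definitions'' or invoking John--Nirenberg. The paper itself addresses this point explicitly just before its proof: while $T(1)$-to-local-$L^2$ implications are known for measures with polynomial growth (e.g.\ \cite{TolsaMathAnn2001}), to the authors' knowledge no such result is available for purely atomic measures like $\sigma$. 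Your appeal to the ``$T(1)$-clause built into Bessonov's characterization'' therefore assumes exactly what remains to be proved.

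The paper's argument does not take this shortcut. It verifies Tolsa's testing criterion $\|\mathcal{C}_\sigma\chi_Q\|_{L^2(\sigma)}^2 \lesssim \sigma(Q)$ directly, and in doing so uses substantially more than condition (v): the core step for the ``near'' part $I$ (atoms in $2Q$) introduces auxiliary points $t_j$ lying on the support of a \emph{different} Clark measure $\sigma^{1/N}$, compares $\sigma$ and $\sigma^{1/N}$ via Lemma~\ref{Lemma misura}, and exploits the isometry $W_\sigma\colon L^2(\sigma)\to K_u$ to convert a piece of the sum into $\|W_\sigma\chi_Q\|_{H^2}^2 = \sigma(Q)$. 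Conditions (iii) and (iv) of Theorem~\ref{T:Bessonov} enter repeatedly through Lemmas~\ref{Lemma misura} and~\ref{L:integralestimate}. In short, the proof genuinely uses the full structure of one-component Clark measures, not just the pointwise bound on $\mathcal{C}_\sigma 1$, and this is the content you are missing.
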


This result was initially motivated by Theorem 1 of \cite{bessonov2015duality} (see also Theorem \ref{T:Bessonov} below), where the fifth condition takes the form of a $T(1)$ criterion. To the best of our knowledge, Theorem \ref{T:main2} is new and of independent interest, as it highlights an additional remarkable property of one-component inner functions. Its conclusion does not follow directly from the boundedness of $\mathcal{C}_\sigma 1$ or from previously known results in the extensive literature on the subject.

The organization of the rest of the paper is as follows. Section \ref{S:preliminaries} provides background on de Branges--Rovnyak spaces, harmonically weighted Dirichlet spaces, and one-component inner functions. While most of these results are already established, we include proofs for those lacking readily accessible references. The proof of Theorem \ref{T:main2} is given in Section \ref{S:proofT2}. Sections \ref{S:proofT1measure} through \ref{S:proofT1embDm} are devoted to the proof of Theorem \ref{T:main1}. Specifically, in Section \ref{S:proofT1measure}, we demonstrate the necessity of conditions \eqref{E:main1expressionmu}, \eqref{E:main1atoms}, and \eqref{E:main1potential}. Sections \ref{S:proofT1embHb} and \ref{S:proofT1embDm} then provide necessary and sufficient conditions for the embeddings $\mathcal{H}(b)\hookrightarrow \mathcal{D}_\mu$ and $\mathcal{D}_\mu\hookrightarrow \mathcal{H}(b)$, respectively, which together allow us to complete the proof of Theorem \ref{T:main1} in Section \ref{S:proofT1embDm}. In Section \ref{S:examples}, we present examples illustrating Theorem \ref{T:main1}. We construct an explicit function satisfying its conditions, thereby showing that the associated de Branges--Rovnyak space is also a harmonically weighted Dirichlet space. In view of Theorem \ref{T:main2}, one might suspect that, since the Cauchy transform is automatically bounded in the setting of one-component inner functions, the potential condition \eqref{E:main1potential} is likewise automatically satisfied. However, in Section \ref{S:examples}, we also construct a class of one-component inner functions for which \eqref{E:main1potential} fails. These examples demonstrate the optimality of the condition stated in Theorem \ref{T:main1}. In Section \ref{S:perp-result}, we prove Theorem \ref{T:perturbation} and present a broad class of examples in which the equality $\mathcal{H}(b) = \mathcal{D}_\mu$ holds and the measure $\mu$ possesses infinitely many atoms. In Section \ref{S:brownshields}, as an application of Theorem \ref{T:main1}, we verify a variation of the Brown–Shields conjecture for these de Branges--Rovnyak spaces. We emphasize that our result, Theorem \ref{T:main3}, is a direct consequence of Theorem 1 in \cite{ELFALLAH20163262}.

\section{Preliminaries} \label{S:preliminaries}

\subsection{De Branges--Rovnyak spaces}
Let $b$ be an $H^\infty$-function on the open unit disk $\mathbb{D}$ with $\|b\|_{H^\infty}\leq1$. The de Branges--Rovnyak space $\mathcal{H}(b)$ is the reproducing kernel Hilbert space on $\mathbb{D}$ associated with the positive definite kernel
\[
k^b_\lambda(z) := \frac{1-b(z)\overline{b(\lambda)}}{1-z\overline{\lambda}},
\]
where $z, \lambda \in \mathbb{D}$.

Though $\mathcal{H}(b)$ is contractively contained in the classical Hardy space $H^2$, it is generally not closed in the $H^2$ norm.
It is well-known that $\mathcal{H}(b)$ is closed in $H^2$ if and only if $b=u$ is an inner function.
In this case, $\mathcal{H}(b)=K_u=H^2\ominus uH^2$ (the orthogonal complement of $uH^2$ in $H^2$) is the so-called model space \cite{MR2500010}.

The model spaces can be equivalently introduced as the closed invariant subspaces of the backward shift operator \cite[Chapter 5]{model}, or using the Aleksandrov-Clark transform \cite[Chapter 11]{model}. Indeed, if $\sigma^\alpha$ is a Clark measure associated to $u$, then $K_u=W_{\sigma^\alpha}(L^2(\sigma^\alpha))$, where, for every $f \in L^2(\sigma^\alpha)$,
\[
W_{\sigma^\alpha} f(z) := (1-e^{-2\pi i\alpha}u(z))\int_{\mathbb{T}}\frac{f(\xi)}{1-z\overline{\xi}}\,\operatorname{d}\!\sigma^\alpha(\xi),
\]
or, equivalently,
\[
W_{\sigma^\alpha} f(z)=\int_{\mathbb{T}}\frac{1-u(z)\overline{u(\xi)}}{1-z\overline{\xi}}f(\xi)\,\operatorname{d}\!\sigma^\alpha(\xi).
\]
Moreover, for every $f \in K_u$,
\[
W_{\sigma^\alpha}^{-1}f=f_{\vert_{\operatorname{supp}(\sigma^\alpha)}}, \qquad \sigma^\alpha\text{-a.e. on}\,\,\mathbb{T},
\]
and
\begin{equation}\label{Clark measure}
\|f\|_{K_u} = \|W_{\sigma^\alpha}^{-1}f\|_{L^2(\sigma^\alpha)} = \|f\|_{L^2(\sigma^\alpha)}.
\end{equation}
See \cite{Poltoratski} for precise definitions of $W^{-1}_{\sigma^\alpha}$.

Given an inner function $u$ and $b=(1+u)/2$, the identity
\[
\left|\frac{1+u(z)}{2}\right|^2+\left|\frac{1-u(z)}{2}\right|^2=\frac{1+|u(z)|^2}{2},\qquad z\in\mathbb{D},
\]
shows that the Pythagorean mate of $b$ is
\[
a(z)=\gamma\frac{1-u(z)}{2},
\]
with $\gamma\in\mathbb{T}$ chosen so that $a(0)>0$. In this case $(b,a)$ clearly forms a Corona pair, i.e.,
\[
\inf_{z \in \mathbb{D}}\left(|a(z)|^2+|b(z)|^2\right)>0.
\]
With this choice of $b$, the following result in \cite{FricainGrivaux} explicitly describes the space $\mathcal{H}(b)$. In fact, this result was initiated in \cite{Fricain_Hartmann_Ross_2018}.

\begin{lemma}[Fricain--Grivaux, Lemma 2.2 of \cite{FricainGrivaux}]\label{L:FricainGrivaux}
Let $b=(1+u)/2$ with $u$ an inner function. Then $\mathcal{H}(b)$ decomposes as the orthogonal sum
\[
\mathcal{H}(b) = K_u \oplus_b (1-u)H^2.
\]
Moreover, if $f=g+(1-u)h$, with $g\in K_u$ and $h\in H^2$, we have
\begin{equation} \label{E:normeqHb}
\|f\|_{\mathcal{H}(b)}^2\asymp \|g\|_{H^2}^2+\|h\|_{H^2}^2.
\end{equation}
\end{lemma}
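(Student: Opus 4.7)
My plan is to derive the decomposition directly from a factorization of the reproducing kernel $k^b_\lambda$ of $\mathcal{H}(b)$, followed by an application of Aronszajn's sum theorem. The starting point is the algebraic identity
\[
4\bigl(1 - b(z)\overline{b(\lambda)}\bigr) = 4 - (1+u(z))(1+\overline{u(\lambda)}) = 2\bigl(1-u(z)\overline{u(\lambda)}\bigr) + (1-u(z))\overline{(1-u(\lambda))},
\]
which, after dividing by $1-z\bar\lambda$, yields the kernel splitting
\[
k^b_\lambda(z) = \tfrac{1}{2}\, k^u_\lambda(z) + \frac{(1-u(z))\overline{(1-u(\lambda))}}{4(1-z\bar\lambda)}.
\]

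Next I would identify each summand as the reproducing kernel of an explicit RKHS. The first term is manifestly the kernel of $K_u$ equipped with the rescaled inner product $2\langle\cdot,\cdot\rangle_{H^2}$. For the second, I would consider the map $V:H^2\to\operatorname{Hol}(\mathbb{D})$ defined by $Vh:=(1-u)h/2$, set $\mathcal{M}:=V(H^2)=(1-u)H^2$, and equip $\mathcal{M}$ with the norm $\|(1-u)h\|_{\mathcal{M}}:=2\|h\|_{H^2}$, which makes $V$ a unitary. A short RKHS computation, based on the observation
\[
Vh(\lambda) = \tfrac{1-u(\lambda)}{2}\,h(\lambda) = \bigl\langle h,\,\overline{\tfrac{1-u(\lambda)}{2}}\,k^{H^2}_\lambda\bigr\rangle_{H^2} = \bigl\langle Vh,\,V\bigl(\overline{\tfrac{1-u(\lambda)}{2}}\,k^{H^2}_\lambda\bigr)\bigr\rangle_{\mathcal{M}},
\]
shows that the reproducing kernel of $\mathcal{M}$ is precisely $(1-u(z))\overline{(1-u(\lambda))}/[4(1-z\bar\lambda)]$, matching the second summand.

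Aronszajn's sum theorem then identifies $\mathcal{H}(b)$ as a set with $K_u + (1-u)H^2$ and yields
\[
\|f\|_{\mathcal{H}(b)}^2 = \inf\bigl\{\,2\|g\|_{H^2}^2 + 4\|h\|_{H^2}^2 \,:\, f=g+(1-u)h,\ g\in K_u,\ h\in H^2\,\bigr\}.
\]
To promote this to an \emph{orthogonal} direct sum and remove the infimum, I would verify that $K_u\cap(1-u)H^2=\{0\}$. This is the only nontrivial residual point and is handled as follows: if $v=(1-u)h\in K_u$ then $0=T_{\bar u}v = T_{\bar u}h - h$, forcing $h\in uH^2$, say $h=u\psi$; a second application of $T_{\bar u}$ to $v=u\psi-u^2\psi$ gives $(1-u)\psi=0$, so $\psi=0$ (since $1-u\ne 0$ a.e.\ on $\mathbb{T}$) and therefore $v=0$. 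Uniqueness of the decomposition now converts the infimum into the equality $\|f\|_{\mathcal{H}(b)}^2 = 2\|g\|_{H^2}^2 + 4\|h\|_{H^2}^2$, which simultaneously yields the orthogonal decomposition $\mathcal{H}(b)=K_u\oplus_b(1-u)H^2$ and the norm equivalence $\|f\|_{\mathcal{H}(b)}^2 \asymp \|g\|_{H^2}^2 + \|h\|_{H^2}^2$. The main technical step is the careful identification of the RKHS structure on $(1-u)H^2$ underlying the second summand, since one must keep track of the exact scaling in order for Aronszajn's theorem to apply with the explicit kernel; the algebraic kernel splitting and the triviality of the intersection are then straightforward.
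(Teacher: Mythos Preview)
Your argument is correct. The paper does not give its own proof of this lemma; it is quoted as Lemma~2.2 of Fricain--Grivaux, so there is no ``paper proof'' to compare against. Your route via the kernel identity
\[
k^b_\lambda(z)=\tfrac12\,k^u_\lambda(z)+\frac{(1-u(z))\overline{(1-u(\lambda))}}{4(1-z\bar\lambda)}
\]
together with Aronszajn's sum theorem is exactly the natural way to derive the decomposition, and in fact yields the sharper equality $\|f\|_{\mathcal{H}(b)}^2=2\|g\|_{H^2}^2+4\|h\|_{H^2}^2$ rather than just the equivalence stated.

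One small remark on the verification of $K_u\cap(1-u)H^2=\{0\}$: your step ``$T_{\bar u}h=h$ forces $h\in uH^2$'' is correct (it follows from $\|P_+(\bar u h)\|=\|h\|=\|\bar u h\|$, so $\bar u h\in H^2$), but once you know $h=u\psi$, the original equation $T_{\bar u}h=h$ already reads $\psi=u\psi$, giving $\psi=0$ directly; the ``second application of $T_{\bar u}$ to $v$'' is just a restatement of the same condition. This is a cosmetic point---the argument is sound as written.
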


For more information about the de Branges--Rovnyak spaces, we refer to the monographs \cite{hb2, sarHb}.

\subsection{Harmonically weighted Dirichlet spaces}
Given a finite positive Borel measure $\mu$ on the unit circle $\mathbb{T}$, the associated harmonically weighted Dirichlet space $\mathcal{D}_\mu$ is the
family of all functions in $H^2$ that have finite harmonically weighted Dirichlet
integral, that is,
\[
\mathcal{D}_\mu(f) := \frac{1}{\pi}\int_{\mathbb{D}}|f'(z)|^2 P_\mu(z) \operatorname{d}\!A(z) <\infty,
\]
where 
\[
P_\mu(z)=\int_{\mathbb{T}}\frac{1-|z|^2}{|z-\zeta|^2}\operatorname{d}\!\mu(\zeta), \qquad z\in\mathbb{D},
\]
is the Poisson integral of $\mu$ and $\operatorname{d}\!A$ is the two-dimensional Lebesgue measure.
We notice that the weighted Dirichlet integral annihilates all the constants. Thus, on its own, it does not produce a norm. However, $\mathcal{D}_\mu$ is a Hilbert space with respect to the norm
\[
\|f\|_{\mathcal{D}_\mu}^2 := \|f\|^2_{H^2}+\mathcal{D}_\mu(f).
\]

Choosing as $\mu$ the Lebesgue measure on $\mathbb{T}$, one sees that $\mathcal{D}_\mu$ coincides with the classical Dirichlet space. For $\zeta \in \mathbb{T}$, considering the Dirac mass $\delta_\zeta$, we obtain the so-called local Dirichlet space, which we simply denote by $\mathcal{D}_\zeta$. It is a known fact that polynomials are dense in $\mathcal{D}_\mu$, for every choice of $\mu$. For further information about the harmonically weighted Dirichlet spaces, we refer to \cite[Chapter 7]{primer-fkmr}.

To continue our analysis, we restrict our attention to a special class of measures, more precisely, we assume that $\mu$ has the form
\begin{equation*}
\mu=\sum_{j\in\mathbb{N}} \mu_j \delta_{\zeta_j},
\end{equation*}
where $\mu_j>0$, $\sum_{j\in\mathbb{N}} \mu_j <\infty$, and $\zeta_j\in\mathbb{T}$. With this assumption, the potential $V_\mu$ has the explicit form
\begin{equation*}
V_\mu(z)=\sum_{j\in\mathbb{N}} \frac{\mu_j}{|z-\zeta_j|^2}, \qquad z\in\mathbb{C}.
\end{equation*}
Note that $V_\mu(\zeta_n)=\infty$ for every point $\zeta_n$ and $V_\mu(z)<\infty$ for every $z\notin\overline{\cup_j\zeta_j}$. Also, $V_\mu$ is lower semi-continuous on $\mathbb{C}$ and it is continuous on $\mathbb{C}\setminus\overline{\cup_j\zeta_j}$. Finally, observe that when $\nu=\sum_{j\in\mathbb{N}} \nu_j \delta_{\zeta_j}$ with $\nu_j\asymp\mu_j$, then $\mathcal{D}_\mu=\mathcal{D}_\nu$.

\subsection{One-component inner functions}
In Theorems \ref{T:main1} and \ref{T:main2}, we considered one-component inner functions. The associated model spaces and Clark measures in this setting possess several important properties, which we collect in this section. In the whole article, when $\zeta\in \mathbb{T}$, by $u'(\zeta)$ we mean the angular derivative of $u$ in the sense of Carath\'{e}odory at the boundary point $\zeta$. We also write $|u'(\zeta)|=\infty$ if such a derivative does not exist. By Julia's inequality, we always have $|u'(\zeta)|>0$. The norm of the reproducing kernel $k^u_z$ is equal to
\begin{equation}\label{E:norm-ku}
\|k^u_z\|^2_{H^2}=\frac{1-|u(z)|^2}{1-|z|^2}, \qquad z \in \mathbb{D}.
\end{equation}
If $\zeta \in \mathbb{T}$ with $|u'(\zeta)|<\infty$, then $k^u_\zeta$ is well defined and belongs to $K_u$ with
\[
\|k^u_\zeta\|^2_{H^2}=|u'(\zeta)|.
\]
See \cite[Theorem 21.1]{hb2} and \cite{MR2986324}.

If $\|b\|_{H^\infty}<1$, then  $\mathcal{H}(b)$ coincides with $H^2$ as sets with an equivalent norm, and thus this case is not interesting for the purposes of this work. Hence, from now on we consider only symbols with $\|b\|_{H^\infty}=1$. In this setting, the \emph{boundary spectrum} is
\[
\rho(b) := \{\zeta \in \mathbb{T}\ \colon\ \liminf_{z \to \zeta}|b(z)|<1\}.
\]
When $u$ is an inner function, $\rho(u)$ is a closed subset of $\mathbb{T}$ and can be equivalently described as
\[
\rho(u)=\{\zeta \in \mathbb{T}\ \colon\ \liminf_{z \to \zeta}|u(z)|=0\}.
\]
It is well known that every function $f\in K_u$ admits an analytic extension across any open arc $I \subset \mathbb{T}\setminus \rho(u)$. If $u$ is one-component, then by Theorem 1.11 of \cite{Aleksandrov2000}, $|\rho(u)|=0$ and $\liminf_{r\to 1^-}|u(r\zeta)|<1$ for all $\zeta\in\rho(u)$. In particular, this implies $|u'(\zeta)|=\infty$ for every $\zeta\in\rho(u)$. According to \cite[Theorem 1.2]{Aleksandrov2000}, there also exists a constant $C_u>0$ such that
\begin{equation}\label{E:Relation norms kernels one-component}
\|k^u_z\|_{H^\infty}\leq C_u\|k^u_z\|_{H^2}^2
\end{equation}
for every $z \in \overline{\mathbb{D}}\setminus\rho(u)$.  We may choose $C_u$ to be the smallest number that satisfies the above inequality, and this quantity will appear in several discussions in the rest of the paper. When there is no ambiguity, we will also write $C$ for $C_u$. Moreover, by \eqref{E:norm-ku} and \eqref{E:Relation norms kernels one-component},
\begin{equation} \label{E:aleksandrov3}
\frac{1-|z|^2}{1-|u(z)|^2} \leq C_u \, \inf_{w\in\mathbb{D}}\left|\frac{1-\overline{z}w}{1-\overline{u(z)}u(w)}\right|, \qquad z\in\mathbb{D}.
\end{equation}
To exploit this estimation, fix $\zeta\in\mathbb{T}\setminus\rho(u)$, and let $\eta\in\rho(u)$ be such that $\operatorname{dist}(\zeta,\rho(u))=|\zeta-\eta|$. Then
\[
\inf_{w\in\mathbb{D}}\left|\frac{1-\overline{z}w}{1-\overline{u(z)}u(w)}\right|\leq \liminf_{w\to\eta} \left|\frac{1-\overline{z}w}{1-\overline{u(z)}u(w)}\right|=|z-\eta|, \qquad z\in\mathbb{D}.
\]
Hence, taking the limit in \eqref{E:aleksandrov3} as $z\to\zeta\in\mathbb{T}$, we obtain
\[
\frac{1}{|u'(\zeta)|}\leq C_u \text{ dist}(\zeta,\rho(u)) ,\qquad \zeta \in \mathbb{T}\setminus\rho(u).
\]
An analogous result also follows from the recent paper \cite{bellavita2024spectralanalysisdifferencequotient}. Interpreting $1/\infty = 0$ in the above inequality corresponding to the case $\zeta\in\rho(u)$, we may even write
\begin{equation}\label{dist spectrum}
\frac{1}{|u'(\zeta)|}\leq C_u \text{ dist}(\zeta,\rho(u)) ,\qquad \zeta \in \mathbb{T}.
\end{equation}

The Clark measures associated to one-component inner functions have been completely characterized by Bessonov. For a Borel measure $\mu$ on $\mathbb{T}$, we denote its support by $\operatorname{supp}(\mu)$, and
\[
a(\mu)=\{\zeta\in\mathbb{T}\colon \zeta \,\, \text{is an isolated atom of}\,\,\mu\},
\]
and $\tau(\mu)=\operatorname{supp}(\mu)\setminus a(\mu)$. For all Clark measures $\sigma^\alpha$ of the one-component inner function $u$, regardless of the parameter $\alpha$, we have $\tau(\sigma^\alpha)=\rho(u)$ \cite[Lemma 2.1]{bessonov2015duality}. For the complementary part $a(\sigma^\alpha)$, we have the following complete description.

\begin{theorem}[Bessonov, Theorem 1 in \cite{bessonov2015duality}]\label{T:Bessonov}
Let $\alpha \in [0,1)$. The following conditions are necessary and sufficient for a Borel measure $\mu$ on $\mathbb{T}$ to be the Clark measure $\sigma^\alpha$ of a one-component inner function.
\begin{enumerate}[(i)]
\item $|\!\operatorname{supp} (\mu)|= 0$.
\item $\mu$ is a discrete measure with only isolated atoms.
\item Every atom $\xi$ has two neighbors\footnote{ By ‘‘neighbors", we mean that there are no other atoms between both $\xi$ and $\xi^+$, and between $\xi$ and~$\xi^-$.} $\xi^\pm \in a(\mu)$, and every connected component of $\mathbb{T}\setminus \tau(\mu)$ contains atoms of $\mu$.
\item \label{T:Bessonov(iv)}There exist positive constants $A_\mu$ and $B_\mu$ such that for every $\xi \in a(\mu)$
\[
A_\mu|\xi-\xi^\pm|\leq \mu(\xi)\leq B_\mu|\xi-\xi^\pm|.
\]
\item The Cauchy transform
\[
\mathcal{C}_\mu 1(z)= \int_{\mathbb{T}\setminus\{z\}}\frac{\operatorname{d}\!\mu(s)}{1-\overline{s}z}, \qquad z\in\mathbb{T},
\]
is uniformly bounded on $a(\mu)$.
\end{enumerate}
\end{theorem}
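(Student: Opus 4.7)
The plan is to prove the two directions separately. The Clark correspondence sets up a bijection between inner functions $u$ (normalized suitably) and positive singular Borel measures on $\mathbb{T}$, so for sufficiency I would construct $u$ from $\mu$ via the Herglotz representation
\[
\frac{1-e^{-2\pi i\alpha}u(z)}{1+e^{-2\pi i\alpha}u(z)}=\int_{\mathbb{T}}\frac{\xi+z}{\xi-z}\operatorname{d}\!\mu(\xi),\qquad z\in\mathbb{D},
\]
and then verify that $u$ is one-component, whereas for necessity I would read off each of (i)--(v) from structural properties of one-component inner functions.

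For necessity, (i) is immediate from Clark's theorem since $\sigma^\alpha$ is singular. Bessonov's lemma $\tau(\sigma^\alpha)=\rho(u)$ combined with Aleksandrov's result $|\rho(u)|=0$ for one-component $u$ forces $a(\sigma^\alpha)$ to be dense in $\operatorname{supp}(\sigma^\alpha)$, yielding (ii); and the boundary estimate \eqref{dist spectrum} derived above, $\tfrac{1}{|u'(\zeta)|}\lesssim\operatorname{dist}(\zeta,\rho(u))$, prevents atoms from accumulating away from $\rho(u)$ and shows each atom is isolated, hence (iii). The two-sided bound in (iv) follows by exploiting the same estimate on each connected component of $\mathbb{T}\setminus\tau(\sigma^\alpha)$: the mass $\sigma^\alpha_n=1/|u'(\zeta_n)|$ is comparable to the distance between consecutive atoms, with the upper bound coming from \eqref{dist spectrum} applied to $\zeta_n^\pm$ and the lower bound from a Harnack-type argument on $\Omega(u,\epsilon)$. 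For (v), I would evaluate the Herglotz integral above at non-tangential limits to atoms and compare the resulting quantity with $\mathcal{C}_\mu 1(\zeta_n)$: the connectedness of $\Omega(u,\epsilon)$ uniformly controls how close $u$ comes to the value $e^{2\pi i\alpha}$ away from a single atom, which translates directly into a uniform bound on $\mathcal{C}_{\sigma^\alpha}1(\zeta_n)$.

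For sufficiency I would start from $\mu$ satisfying (i)--(v) and define $u$ by Herglotz as above, so that $\mu=\sigma^\alpha(u)$. The task reduces to showing $\Omega(u,\epsilon)$ is connected for some $\epsilon\in(0,1)$. The strategy is to build an explicit connected domain $R\subset\mathbb{D}$ as a union of Carleson boxes over the arcs between consecutive atoms of $\mu$, thickened away from each atom, and to prove two sandwich inclusions: on the one hand $|u(z)|\geq\epsilon'$ on $\mathbb{D}\setminus R$, ruling out components of $\Omega(u,\epsilon)$ outside $R$, and on the other hand $|u(z)|\leq\epsilon$ on a connected spine inside $R$. The first inclusion is where (v) enters, since on dyadic subarcs away from every atom the real part of the Herglotz integrand is well approximated by $\mathcal{C}_\mu 1$ plus a tail that is summable by (iv). The comparability (iv) also guarantees that the geometry of the Clark atoms is regular enough for a standard Whitney-type decomposition to go through.

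The main obstacle will be sufficiency, and in particular the quantitative passage from the $T(1)$-type hypothesis (v) to the connectedness of $\Omega(u,\epsilon)$ with a single $\epsilon$ valid on all of $\mathbb{D}$. Pointwise bounds on $|u|$ near $\mathbb{T}$ are delicate because the Herglotz integrand oscillates near each atom; one has to split the integral into a principal value piece (controlled by (v)) and a tangential contribution (controlled by (iv)) and then patch these local bounds into a global statement. This is exactly the kind of dyadic Carleson-packing argument that $T(1)$-theorems are designed for, and condition (v) is tailored so that the corresponding Carleson packing constant is finite; the rest of the proof is a careful but largely technical bookkeeping of constants.
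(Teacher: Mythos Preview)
This theorem is not proved in the paper at all: it is quoted verbatim as ``Bessonov, Theorem~1 in \cite{bessonov2015duality}'' and used as a black box throughout (in the proofs of Lemma~\ref{Lemma misura}, Lemma~\ref{L:integralestimate}, Theorem~\ref{T:main2}, Theorem~\ref{T:perturbation}, etc.). There is therefore no ``paper's own proof'' to compare your proposal against.

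Your sketch is a reasonable outline of how Bessonov's original argument proceeds, but since the present paper neither supplies nor requires such a proof, the comparison you were asked for is vacuous here. If your goal is to understand the result, you should consult \cite{bessonov2015duality} directly; if your goal is to follow the present paper, you may take Theorem~\ref{T:Bessonov} as given.
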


Fix $N\in\mathbb{N}$ and let $\ell$ be a natural number in $\{0, \ldots, N-1\}$. We define
\begin{equation}\label{T_ell}
T_{\ell,N}=\{t \in \mathbb{T}\setminus \rho(u)\ \colon\ u(t)=e^{2\pi i\frac{\ell}{N}}\}
\end{equation}
and
\begin{equation}\label{t_m}
T_N=\bigcup_{\ell=0}^{N-1}T_{\ell,N}=\{t_m\}_m.
\end{equation}
Note that $T_{\ell,N}$ consists of all the base points for the atoms of the Clark measure $\sigma^{\ell/N}$. We partition $\mathbb{T}\setminus \rho(u)$ into arcs $J_n$ with mutually disjoint interiors and whose endpoints are in the set $T_N$. We denote $J_n=[t_{n_1},t_{n_2})$ the arc having endpoints $t_{n_1}$ and $t_{n_2}$, the latter being excluded, so that
\[
u(t_{n_1})=e^{2\pi i\frac{k_n}{N}}, \quad u(t_{n_2})=e^{2\pi i\frac{k_n+1}{N}},
\]
for an appropriate $k_n\in \{0,\ldots,N-1\}$, and thus
\[
\frac{1}{2\pi}\int_{J_n}|u'(\xi)|\,|\!\operatorname{d}\!\xi|=\frac{1}{N}.
\]
Note that, under these conditions, the points $t_{n_1},t_{n_2}$ are consecutive, i.e., $(t_{n_1},t_{n_2})\cap T_N=\emptyset.$

\begin{lemma}[Baranov--Dyakonov, Lemma 5.1 of \cite{baranov2011feichtinger}]\label{L:feichtinger}
Let $u$ be a one-component inner function. Let $N\geq 20 \pi C$, with $C$ as in \eqref{dist spectrum}. Then, for every $t \in J_n=[t_{n_1},t_{n_2})$,
\[\frac{1}{C_1}|u'(t_{n_1})|\leq  |u'(t)|\leq C_1 |u'(t_{n_1})|,\]
and
 \[\frac{1}{C_2}\frac{1}{N|u'(t_{n_1})|}\leq |J_n|\leq C_2\frac{1}{N|u'(t_{n_1})|}.\]
 We may take $C_1=100/81$ and $C_2=2\pi C_1$.
\end{lemma}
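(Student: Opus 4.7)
The plan is to bootstrap from the integral normalization $\int_{J_n}|u'(\xi)|\,|d\xi|=2\pi/N$ to pointwise control of $|u'|$ on $J_n$, combining the Aleksandrov-type bound \eqref{dist spectrum} with the explicit angular-derivative formula and the rigidity supplied by the one-component hypothesis.

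First, I would control $|J_n|$. Setting $d_{n_1}:=\operatorname{dist}(t_{n_1},\rho(u))$, the triangle inequality yields $\operatorname{dist}(t,\rho(u))\leq d_{n_1}+|J_n|$ for every $t\in J_n$, so \eqref{dist spectrum} produces $|u'(t)|\geq 1/(C(d_{n_1}+|J_n|))$. Substituting into the normalization and rearranging,
\[
|J_n|\,(1-2\pi C/N)\;\leq\;2\pi C\,d_{n_1}/N.
\]
The hypothesis $N\geq 20\pi C$ makes the factor $2\pi C/N$ at most $1/10$, forcing the key smallness $|J_n|/d_{n_1}\leq 1/9$.

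Next, I would pass to the pointwise oscillation of $|u'|$ on $J_n$. Comparing the explicit representation
\[
|u'(\zeta)|=\sum_k \frac{1-|a_k|^2}{|\zeta-a_k|^2}+2\int_{\mathbb{T}}\frac{d\mu(\eta)}{|\eta-\zeta|^2},\qquad \zeta\in\mathbb{T}\setminus\rho(u),
\]
(with $\{a_k\}$ the Blaschke zeros and $\mu$ the singular measure of $u$) at $t$ and at $t_{n_1}$, I note that $\mu$ is concentrated on $\rho(u)$ and hence at distance $\geq(8/9)d_{n_1}$ from every point of $J_n$. For the zeros, the one-component hypothesis forces $|u|$ to be bounded below by a constant on a ball $B(t_{n_1},c\,d_{n_1})\cap\overline{\mathbb{D}}$ for an explicit $c>0$, because the connected sublevel set $\{|u|<\epsilon\}$ can meet $\mathbb{T}$ only inside $\rho(u)$. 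Consequently $|a_k-t|\geq c'\,d_{n_1}$ uniformly in $t\in J_n$ and $k$. A termwise triangle-inequality estimate of the form $|t'-s|^{-2}\leq(1+|t-t'|/|t-s|)^{2}|t-s|^{-2}$, applied to every term of the sum and the integrand, then yields, after tightening the constants from both steps,
\[
\frac{1}{C_1}\;\leq\;\frac{|u'(t)|}{|u'(t_{n_1})|}\;\leq\;C_1\qquad\text{with}\qquad C_1=\Bigl(\frac{10}{9}\Bigr)^{\!2}=\frac{100}{81},
\]
which is the first assertion. Integrating this comparison against $|d\xi|$ and using $\int_{J_n}|u'|=2\pi/N$ then delivers $|J_n|\cdot|u'(t_{n_1})|\asymp 2\pi/N$, the second assertion with $C_2=2\pi C_1$.

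The main obstacle is the quantitative one-component estimate underlying the second paragraph: one must produce, with an explicit constant, a ball around $t_{n_1}$ of radius comparable to $d_{n_1}$ on which neither zeros of $u$ nor mass of its singular measure can accumulate. This ``spectral repulsion'' is the heart of the one-component hypothesis, and carefully tracking its constant together with the smallness $|J_n|/d_{n_1}\leq 1/9$ from the first paragraph is what produces exactly $C_1=(10/9)^2$.
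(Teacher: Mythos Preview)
The paper does not supply a proof of this lemma; it is quoted verbatim from Baranov--Dyakonov, so there is no ``paper's own proof'' to compare against. I will therefore assess your sketch on its own merits.

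Your first paragraph is correct and clean: combining $\int_{J_n}|u'|=2\pi/N$ with the lower bound $|u'(t)|\geq (C(d_{n_1}+|J_n|))^{-1}$ from \eqref{dist spectrum} gives $|J_n|\leq \frac{2\pi C/N}{1-2\pi C/N}\,d_{n_1}\leq d_{n_1}/9$ when $N\geq 20\pi C$. That is exactly the smallness one needs.

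The second paragraph has two genuine gaps. First, the ``spectral repulsion'' for the Blaschke zeros is the heart of the matter and you have not proved it: you need a \emph{quantitative} statement that every zero $a_k$ satisfies $|a_k-t|\gtrsim d_{n_1}$ (or at least $\gtrsim |J_n|$) with an explicit constant, and the mere connectedness of the sublevel set does not hand you this constant without further work (one route is through the inequality $\|k^u_\zeta\|_{H^\infty}\leq C_u\|k^u_\zeta\|_{H^2}^2$, which bounds how fast $|u|$ can drop near $t_{n_1}$ and hence how close a zero can come). Second, even for the singular part, where the repulsion is automatic since $\operatorname{supp}(\mu)\subset\rho(u)$, your termwise estimate does not give $(10/9)^2$: from $|t_{n_1}-\eta|\geq d_{n_1}\geq 9|J_n|$ you get $|t-\eta|\geq |t_{n_1}-\eta|-|J_n|\geq \tfrac{8}{9}|t_{n_1}-\eta|$, hence only $|u'(t)|\leq (9/8)^2|u'(t_{n_1})|$ in that direction. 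So the specific constant $C_1=100/81$ does not fall out of the argument as written; at best you obtain a slightly larger admissible constant.

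A tighter route, and likely closer to Baranov--Dyakonov, is to bypass the explicit Blaschke/singular decomposition altogether and use the one-component bound $|u''(\zeta)|\lesssim |u'(\zeta)|^2$ on $\mathbb{T}\setminus\rho(u)$ (a known consequence of Aleksandrov's kernel inequality). This gives $\bigl|\tfrac{d}{dt}\log|u'(e^{it})|\bigr|\lesssim |u'(e^{it})|$, and integrating over $J_n$ against $\int_{J_n}|u'|=2\pi/N$ yields a uniform bound on the oscillation of $\log|u'|$ directly, with the constant controlled by $2\pi C/N\leq 1/10$. That mechanism produces the sharp $C_1$ without having to separately locate zeros and singular mass.
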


With the help of Lemma \ref{L:feichtinger}, we are able to estimate $\sigma(Q)$ for any arc $Q \subset \mathbb{T}$.

\begin{lemma}\label{Lemma misura}
Let $u$ be a one-component inner function, let $\alpha \in (0,1)$, and let $Q\subset \mathbb{T}$ be an arc which contains at least one atom of the Clark measure $\sigma$ and one atom of the Clark measure $\sigma^\alpha$. Choose the integer $N\geq 20\pi C$, where $C$ is as in \eqref{dist spectrum}, and put $K=2C_1^{N}$, where $C_1$ is the constant in Lemma \ref{L:feichtinger}. Then
\[\frac{1}{K}\sigma^\alpha(Q)\leq \sigma(Q)\leq K\sigma^\alpha(Q).\]
Moreover, if $Q$ contains at least two atoms of $\sigma$,
\begin{equation} \label{E:ClarkLebesgue}
\frac{1}{K}\sigma(Q)\leq |Q|\leq K\sigma(Q).
\end{equation}
\end{lemma}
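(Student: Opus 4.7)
The strategy is to couple Lemma \ref{L:feichtinger} with the combinatorics of the partition $\{J_n\}$ of $\mathbb{T}\setminus\rho(u)$ so as to pair atoms of $\sigma$ with atoms of $\sigma^\alpha$, and then compare both the masses and the arc-lengths. First, with $N\geq 20\pi C$ fixed, I would invoke Lemma \ref{L:feichtinger} to produce the partition $\{J_n\}$ whose endpoints lie in $T_N$, on each of which $u$ traverses an arc of normalised length $1/N$ and $|u'|$ varies by at most a factor $C_1$. The key geometric observation is that, for a one-component inner function, $u$ winds around $\mathbb{T}$ exactly once along the connected component of $\mathbb{T}\setminus\rho(u)$ between two consecutive atoms of $\sigma$, so these atoms are separated by precisely $N$ partition arcs, and the same block of $N$ arcs contains exactly one atom of $\sigma^\alpha$. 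Iterating the Feichtinger bound across such an $N$-block yields the global control
\[
\frac{1}{C_1^{N}}\le \frac{|u'(t)|}{|u'(t')|}\le C_1^{N}
\]
for any $t,t'$ lying in a common block bounded by two consecutive atoms of $\sigma$.

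To obtain the first comparison, I would enumerate the atoms of $\sigma$ in $Q$ as $\zeta_1,\dots,\zeta_m$ and those of $\sigma^\alpha$ in $Q$ as $\eta_1,\dots,\eta_{m'}$, in counter-clockwise order. By the interleaving property above one has $|m-m'|\le 1$. Pairing $\zeta_k$ with $\eta_k$ (up to a single unpaired atom), each pair sits inside a common $N$-block, so the previous display gives
\[
\frac{1}{C_1^N}\,\sigma^\alpha(\eta_k)\le \sigma(\zeta_k)=\frac{1}{|u'(\zeta_k)|}\le C_1^N\,\sigma^\alpha(\eta_k).
\]
Summing these inequalities over the matched pairs and absorbing the possible leftover atom (whose mass, by the same variation bound, is at most $C_1^N$ times that of a neighbouring matched atom) into a factor $2$ yields
\[
\tfrac{1}{2C_1^N}\sigma^\alpha(Q)\le \sigma(Q)\le 2C_1^N\,\sigma^\alpha(Q),
\]
which is the first claim with $K=2C_1^N$.

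For the Lebesgue comparison \eqref{E:ClarkLebesgue}, assume $Q$ contains at least two atoms of $\sigma$ and let $\zeta_1<\dots<\zeta_m$ list all atoms of $\sigma$ in $Q$. Between consecutive $\zeta_k,\zeta_{k+1}$ there are exactly $N$ partition arcs; Lemma \ref{L:feichtinger} together with the $N$-block variation bound gives
\[
|\zeta_{k+1}-\zeta_k|=\sum_{J_n\subset(\zeta_k,\zeta_{k+1})}|J_n|\ \asymp\ N\cdot \frac{1}{N|u'(\zeta_k)|}\ =\ \sigma(\zeta_k),
\]
with implicit constant $C_1^N$. Summing over $k$ gives $\sum_{k<m}|\zeta_{k+1}-\zeta_k|\asymp \sum_{k<m}\sigma(\zeta_k)$, and the pieces of $Q$ lying outside $[\zeta_1,\zeta_m]$ are each bounded, by the same estimate applied to a partial block, by a constant multiple of $\sigma(\zeta_1)$ or $\sigma(\zeta_m)$. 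Collecting these contributions and paying the factor $2$ for the boundary pieces, one obtains $|Q|\asymp \sigma(Q)$ with the same constant $K=2C_1^N$.

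\textbf{Main obstacle.} The principal technical point is the claim that consecutive atoms of $\sigma$ are separated by exactly $N$ partition arcs, i.e.\ that $u$ winds around $\mathbb{T}$ precisely once along a connected component of $\mathbb{T}\setminus\rho(u)$ between them. This is where the one-component assumption enters in a decisive way (and for a general inner function the winding could be any positive integer, rendering the constant $C_1^N$ insufficient). Once this is in hand, the remaining work is a careful bookkeeping of the leftover atom in the first part and of the boundary arcs in the second part, matched to the factor $2$ in the final constant $K=2C_1^N$.
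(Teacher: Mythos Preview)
Your argument for the comparison $\sigma(Q)\asymp\sigma^\alpha(Q)$ is essentially the paper's: pair each $\sigma$-atom with a neighbouring $\sigma^\alpha$-atom inside a common $N$-block, iterate Lemma~\ref{L:feichtinger}, and absorb the possible unpaired atom into the factor~$2$.

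For \eqref{E:ClarkLebesgue} you take a genuinely different route. The paper does not sum arc-lengths at all; instead it invokes the Aleksandrov--Clark disintegration formula
\[
|Q|=\int_0^1 \sigma^\alpha(Q)\,d\alpha,
\]
observes that the hypothesis ``$Q$ contains two atoms of $\sigma$'' guarantees that $Q$ contains an atom of $\sigma^\alpha$ for every $\alpha$, and then applies the first part inside the integral. This recovers the stated constant $K=2C_1^N$ exactly. Your direct summation works, but each use of the second estimate in Lemma~\ref{L:feichtinger} brings in the factor $C_2=2\pi C_1$, and the boundary arcs force a further iteration across an adjacent $N$-block, so the constant you actually obtain is of order $C_2\,C_1^{2N}$ rather than $2C_1^N$. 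The approach is sound as a qualitative $\asymp$ statement, but it does not deliver the constant in the lemma.

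One correction to your ``main obstacle'': the claim that consecutive $\sigma$-atoms are separated by exactly $N$ partition arcs holds for \emph{every} inner function. On any arc of $\mathbb{T}\setminus\rho(u)$ the function $u$ is analytic with $|u|=1$, so its argument is strictly monotone (Julia's inequality gives $|u'|>0$ there); hence $u$ winds exactly once between consecutive preimages of $1$, and the winding number cannot jump. The one-component hypothesis enters only through Lemma~\ref{L:feichtinger}, which provides the uniform derivative comparison across neighbouring $J_n$ that powers the $C_1^N$ bound.
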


\begin{proof}[Proof of Lemma \ref{Lemma misura}]
The sequences of atoms associated to $\sigma$ and $\sigma^\alpha$ are intertwined, that is, for a suitable numbering,
\[
[\xi_i,\xi_i^+]\cap \operatorname{supp}(\sigma^\alpha)=\xi_i^\alpha \ , \ [\xi_i^\alpha,{\xi_i^\alpha}^+]\cap \operatorname{supp}(\sigma)=\xi_{i}^+.
\]
Let $T_N=\{t_m\}_m$ be defined as in \eqref{t_m}. For the sake of clarity, below we denote $t_n,t_{n+1}$ two consecutive points in $T_N$.

\begin{figure}%[h]
\centering
\includegraphics[width=1\linewidth]{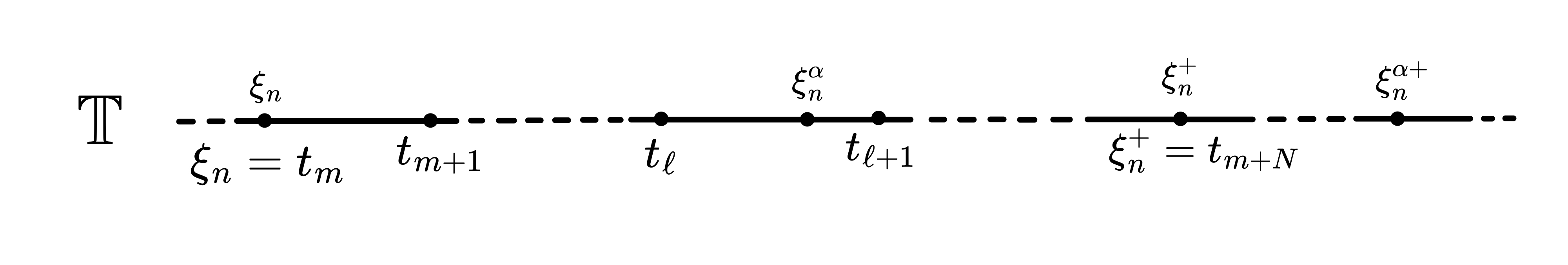}
\caption{Respective locations of $\xi_n$, $\xi_n^+$ and $\xi_n^\alpha,\xi_n^{\alpha+}$.}
\label{Figure1}
\end{figure}

We consider an atom $\xi_n\in Q$. Then $\xi_n=t_{m},\xi_n^+=t_{m+N}$ for some $m$, and the intervals $J_k=[t_{k},t_{k+1})$, $k=m,\ldots,m+N-1$ form a partition of $[\xi_n,\xi_{n}^+)$. In particular, an atom $\xi_n^\alpha$ of the measure $\sigma^\alpha$ must be in one of the intervals $[t_\ell,t_{\ell+1})$, for some $\ell\in\{m,m+1,\ldots,m+N-1\}$. See Figure \ref{Figure1}. Thus, an iterated application of Lemma \ref{L:feichtinger} gives
\begin{align*}
\frac{1}{|u'(\xi_n)|}=\frac{1}{|u'(t_m)|}\leq \frac{C_1}{|u'(t_{m+1})|}\leq \dots \leq  \frac{C_1^{\ell-m}}{|u'(t_{\ell})|}\leq  \frac{C_1^{\ell-m+1}}{|u'(\xi_n^\alpha)|}\leq   \frac{C_1^{N}}{|u'(\xi_n^\alpha)|}.
\end{align*}
Arguing analogously, we obtain
\[
\frac{1}{|u'(\xi_{n}^+)|}\leq C_1^{N}\frac{1}{|u'(\xi^\alpha_{n})|}.
\]
Consequently, since for each $\xi_i \in Q$, there is at least one $\xi_i^\alpha \in Q$ either in $(\xi_i,\xi_i^+)$ or in $(\xi_i^-,\xi_i)$, we conclude that
\[
\sigma(Q)= \sum_{\xi_i \in Q} \sigma_{\xi_i}\leq \sum_{\xi^\alpha_i \in Q} 2C_1^{N} \sigma^\alpha_{\xi^\alpha_i}=2C_1^{N}\sigma^\alpha(Q).
\]
For the reverse implication, we argue analogously.

Finally, using the Aleksandrov-Clark disintegration formula \cite[Chapter 9.3]{cima2006cauchy}, we have
\[
|Q|=\int_{0}^1 \sigma^\alpha(Q)\operatorname{d}\!\alpha \leq 2C_1^{N}\sigma(Q)\int_{0}^1 \operatorname{d}\!\alpha=2C_1^{N}\sigma(Q)
\]
and
\[
|Q|=\int_{0}^1 \sigma^\alpha(Q)\operatorname{d}\!\alpha \geq \frac{1}{2C_1^{N}}\sigma(Q)\int_{0}^1\operatorname{d}\!\alpha=\frac{1}{2C_1^{N}}\sigma(Q).
\]
Notice that the assumption that $Q$ contains two atoms of $\sigma$ ensures that $Q$ contains at least one atom of $\sigma^\alpha$ for every $\alpha\in (0,1)$, a fact which was implicitly used in the above estimations.
\end{proof}

Naively speaking, Lemma \ref{Lemma misura} says that when the arc $Q$ is sufficiently large, the action of $\sigma$ on it is like the Lebesgue measure. In particular, the estimates in \eqref{E:ClarkLebesgue} show that $\sigma$ has the doubling property on arcs $Q$ that contain at least two atoms, that is,
\[
\sigma(2Q)\lesssim\sigma(Q),
\]
where $2Q$ is the arc having the same center as $Q$ but double the arc-length. A relation between the doubling property and one-component inner functions was also observed in \cite[Subsection 2.1]{Marzo2012}.

\begin{lemma}\label{L:integralestimate}
Let $u$ be a one-component inner function, and let $Q\subset \mathbb{T}$ be an arc. Then, for every atom $\xi_i\in Q$,
\[
\int_{\mathbb{T}\setminus Q}\frac{1}{|\xi-\xi_i|^2}\operatorname{d}\!\sigma(\xi)\lesssim \frac{1}{\operatorname{dist}(\xi_i, \mathbb{T}\setminus Q)},
\]
where the constant involved only depends on $u$ and not on either $\xi_i$ or $Q$.
\end{lemma}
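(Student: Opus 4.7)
The plan is to decompose $\mathbb{T}\setminus Q$ into dyadic annuli centered at $\xi_i$ and combine the elementary bound $|\xi-\xi_i|^{-2}\lesssim 2^{-2k}d^{-2}$ on the $k$-th annulus with the doubling estimate for $\sigma$ established in Lemma \ref{Lemma misura}.

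More precisely, set $d:=\operatorname{dist}(\xi_i,\mathbb{T}\setminus Q)$ and, for $k\geq 0$, put
\[
A_k:=\{\xi\in\mathbb{T}\colon 2^k d\leq |\xi-\xi_i|<2^{k+1}d\},
\]
together with the subarc $Q_k\subset \mathbb{T}$ centered at $\xi_i$ of arc-length comparable to $2^{k+1}d$ (truncated by the length of $\mathbb{T}$ when $2^{k+1}d\gtrsim 1$). Since the open arc $\{\xi\in\mathbb{T}:|\xi-\xi_i|<d\}$ is contained in $Q$ by definition of $d$, we have $\mathbb{T}\setminus Q\subset \bigcup_{k\geq 0}A_k\subset\bigcup_{k\geq 0}Q_k$. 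Using $|\xi-\xi_i|\geq 2^k d$ on $A_k$, I would then write
\[
\int_{\mathbb{T}\setminus Q}\frac{1}{|\xi-\xi_i|^2}\operatorname{d}\!\sigma(\xi)\leq \sum_{k\geq 0}\frac{\sigma(A_k)}{(2^k d)^2}\leq \sum_{k\geq 0}\frac{\sigma(Q_k)}{(2^k d)^2}.
\]

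The heart of the argument is to bound $\sigma(Q_k)$. If $A_k$ contains no atom of $\sigma$, that term vanishes; otherwise $Q_k$ contains both $\xi_i$ and some atom belonging to $A_k$, hence at least two atoms of $\sigma$. This is exactly the situation in which the upper bound in \eqref{E:ClarkLebesgue} of Lemma \ref{Lemma misura} applies, giving $\sigma(Q_k)\leq K|Q_k|\asymp 2^{k+1}d$ with $K$ depending only on $u$. Inserting this into the previous display and summing the geometric series yields
\[
\int_{\mathbb{T}\setminus Q}\frac{1}{|\xi-\xi_i|^2}\operatorname{d}\!\sigma(\xi)\lesssim \sum_{k\geq 0}\frac{2^{k+1}d}{(2^k d)^2}=\frac{2}{d}\sum_{k\geq 0}2^{-k}\lesssim \frac{1}{d},
\]
which is the desired estimate.

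I expect no serious obstacle here: the decomposition is standard and the only subtle point is verifying that the hypothesis of Lemma \ref{Lemma misura}, namely the presence of at least two atoms in $Q_k$, is actually met whenever the contribution from $A_k$ is nonzero. This is immediate because $\xi_i\in Q_k$ is already one atom, and the existence of an atom in $A_k$ automatically provides a second one inside $Q_k$. The final constant depends only on the constant $K$ of Lemma \ref{Lemma misura}, hence only on $u$, as required.
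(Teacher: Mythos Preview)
Your proof is correct but follows a genuinely different route from the paper's. The paper argues directly via Bessonov's neighbor estimate (Theorem~\ref{T:Bessonov}\,\eqref{T:Bessonov(iv)}): after rotating so that $\xi_i=1$, it splits the atoms in $\mathbb{T}\setminus Q$ into those in the upper and lower half-circles, bounds each mass by $\sigma_\xi\leq B_\sigma|\xi-\xi^\pm|$, and then reads the resulting sum as a step-function integral dominated by $\int_{\mathbb{T}\setminus Q}|1-\zeta|^{-2}\operatorname{d}\!m(\zeta)$, which is evaluated explicitly. You instead use a dyadic-annulus decomposition together with the linear upper bound $\sigma(Q_k)\lesssim|Q_k|$ from Lemma~\ref{Lemma misura}. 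Your argument is shorter and more modular---it isolates the single property of $\sigma$ that is actually needed (Ahlfors-type upper regularity on arcs containing two atoms)---whereas the paper's computation is more self-contained, appealing to Theorem~\ref{T:Bessonov} directly rather than through the intermediate Lemma~\ref{Lemma misura}. There is no circularity in your route, since Lemma~\ref{Lemma misura} is proved independently of Lemma~\ref{L:integralestimate}. One small omission: you should note, as the paper does, that the case $d=0$ (i.e.\ $\xi_i\in\partial Q$) is trivial since the right-hand side is infinite.
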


\begin{proof}
If $\xi_i\in\partial Q,$ then $\operatorname{dist}(\xi_i, \mathbb{T}\setminus Q)=0$ and the inequality is trivial. Thus, we may assume that $\xi_i$ lies in the interior of $Q$, and then all the considered quantities are finite. The assertion is trivially true when $\mathbb{T}\setminus~\!\!Q$ does not contain any atom (in particular, when $Q=\mathbb{T}$). Considering a suitable rotation $\tilde{u}(z)=u(\xi_i z)$, we may assume that $\xi_i=1$. Also, let $e^{ia}$ and $e^{ib}$ be the endpoints of $Q$, with $0<b\leq a<2\pi$, so that
\[
\overline{Q}=\{e^{i\theta}\colon a\leq\theta\leq 2\pi\}\cup\{e^{i\theta}\colon 0\leq\theta\leq b\}=:[e^{ia},e^{ib}].
\]
See Figure \ref{Figure2}. We divide the Clark atoms of $u$ contained in $\mathbb{T}\setminus Q$ into two subsets:
\begin{align*}
A_{\operatorname{POS}}&=\{\xi_n\in a(\sigma)\setminus Q\colon \Im(\xi_n)\geq 0\}, \\
A_{\operatorname{NEG}}&=\{\xi_n\in a(\sigma)\setminus Q\colon\Im(\xi_n)<0 \},
\end{align*}
so that
\[
\int_{\mathbb{T}\setminus Q}\frac{1}{|\xi-1|^2}\operatorname{d}\!\sigma(\xi)=\sum_{\xi \in A_{\operatorname{POS}}} \frac{1}{|1-\xi|^2}\sigma_{\xi} +\sum_{\xi \in A_{\operatorname{NEG}}} \frac{1}{|1-\xi|^2}\sigma_{\xi}.
\]

If $-1\notin\rho(u)$, then there exist atoms $\xi_{\operatorname{POS}},\xi_{\operatorname{NEG}}$ in $A_{\operatorname{POS}}$ and $A_{\operatorname{NEG}}$, respectively, that are closest to $-1$ (see Figure 2). In this case, $(\xi_{\operatorname{POS}})^+=\xi_{\operatorname{NEG}}$. If however $-1\in\rho(u)$, then such atoms do not exist. We will show that the presence of these two special atoms is inessential, as they can always be disregarded.

\begin{figure}%[h]
\centering
\includegraphics[width=0.6\linewidth]{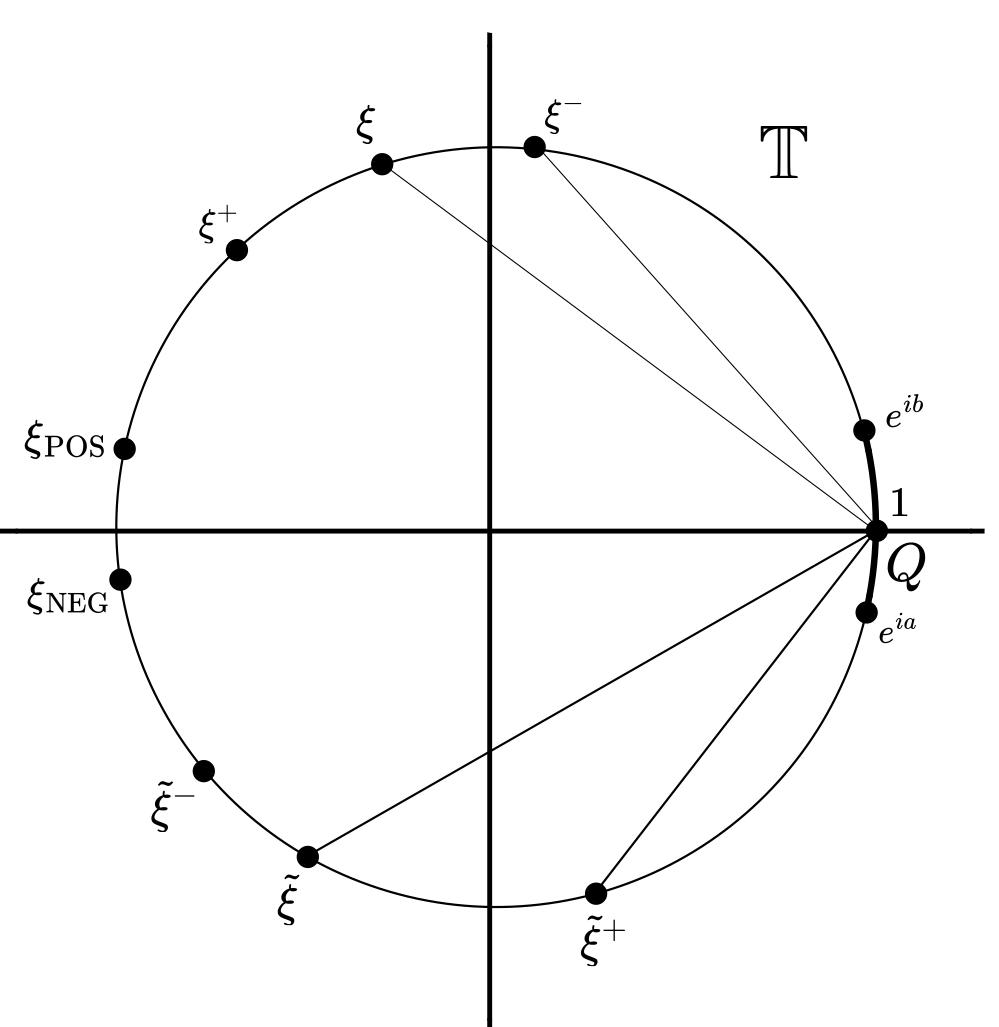}
\caption{Distribution of atoms when $-1\notin\rho(u)$.}
\label{Figure2}
\end{figure}

For every atom $\xi$ in $A_{\operatorname{POS}}$, we have $|1-\xi^-|\leq |1-\xi|$, and
\[
\frac{|1-\xi^+|}{|1-\xi|}\leq 1+ \frac{|\xi-\xi^+|}{|1-\xi|}\leq 1+\frac{B_\sigma}{A_\sigma}\frac{|\xi^- -\xi|}{|1-\xi|}\leq 1+\frac{B_\sigma}{A_\sigma},
\]
where in the second inequality we used the fourth condition of Theorem \ref{T:Bessonov}. Analogously, for every atom $\xi\in A_{\operatorname{NEG}}$, we have
$|1-\xi^+|\leq |1-\xi|$ and
\[
\frac{|1-\xi^-|}{|1-\xi|}\leq  1+\frac{B_\sigma}{A_\sigma}.
\]
We set $K=B_\sigma/A_\sigma$. In the computation below, if the ‘‘extremal” atoms $\xi_{\operatorname{POS}}$ or $\xi_{\operatorname{NEG}}$
do not exist, we treat the corresponding fraction containing
 $\sigma_{\xi_{\operatorname{POS}}}$ or $\sigma_{\xi_{\operatorname{NEG}}}$ as zero, and in any case the following upper estimates are valid. For the part in the upper half circle, we have
\begin{align*}
\sum_{\xi \in A_{\operatorname{POS}}} \frac{1}{|1-\xi|^2}\sigma_{\xi} &=\sum_{\xi \in A_{\operatorname{POS}}\setminus\{\xi_{\operatorname{POS}}\}} \frac{1}{|1-\xi|^2}\sigma_{\xi} + \frac{\sigma_{\xi_{\operatorname{POS}}}}{|1-\xi_{\operatorname{POS}}|^2}\\
&\leq\sum_{\xi \in A_{\operatorname{POS}}\setminus\{\xi_{\operatorname{POS}}\}} \frac{1}{|1-\xi|^2}\sigma_{\xi} + K\frac{\sigma_{(\xi_{\operatorname{POS}})^-}}{|1-(\xi_{\operatorname{POS}})^-|^2}\\
&\leq (1+K)\sum_{\xi \in A_{\operatorname{POS}}\setminus\{\xi_{\operatorname{POS}}\}} \frac{1}{|1-\xi|^2}\sigma_{\xi} \\
&\leq  B_\sigma(1+K) \sum_{\xi \in A_{\operatorname{POS}}\setminus\{\xi_{\operatorname{POS}}\}} \frac{(1+K)^2}{|1-\xi^+|^2}|\xi-\xi^+|.
\end{align*}
The first and third inequalities come again from Theorem \ref{T:Bessonov} \eqref{T:Bessonov(iv)} and the above estimates. Analogously, for $A_{\operatorname{NEG}}$, we have that
\[
\sum_{\xi \in A_{\operatorname{NEG}}} \frac{1}{|1-\xi|^2}\sigma_{\xi}\leq  B_\sigma \sum_{\xi \in A_{\operatorname{NEG}}\setminus\{\xi_{\operatorname{NEG}}\}} \frac{(1+K)^3}{|1-\xi^-|^2}|\xi-\xi^-|.
\]
Let $I_{\xi,\pm}$ be the arcs with extremes $\xi$ and $\xi^\pm$, respectively, and
\[
E_+=\bigcup_{\xi\in A_{\operatorname{POS}}\setminus\{\xi_{\operatorname{POS}}\}}I_{\xi,+},\qquad E_-=\bigcup_{\xi\in A_{\operatorname{NEG}}\setminus\{\xi_{\operatorname{NEG}}\}}I_{\xi,-}.
\]
For every $\xi\in A_{\operatorname{POS}}\setminus\{\xi_{\operatorname{POS}}\}$ and for every $\zeta=e^{it}\in I_{\xi,+}$, the trivial estimation $|1-\zeta|\leq|1-\xi^+|$ holds. Hence, introducing the step function
\[
f_{\operatorname{POS}}(\zeta)=\sum_{\xi\in A_{\operatorname{POS}}}\frac{1}{|1-\xi^+|^2}\chi_{I_{\xi,+}}(\zeta),
\]
we see that
\[
\sum_{\xi\in A_{\operatorname{POS}}\setminus\{\xi_{\operatorname{POS}}\}} \!\frac{1}{|1-\xi^+|^2}|\xi-\xi^+|=\int_{E_+}\!\!f_{\operatorname{POS}}(\zeta)\operatorname{d}\!m(\zeta) \leq\int_{E_+}\frac{1}{|1-\zeta|^2}\operatorname{d}\!m(\zeta),
\]
where now we are integrating with respect to the Lebesgue measure. After analogous considerations for the atoms in $A_{\operatorname{NEG}}$, we conclude that
\[
\int_{\mathbb{T}\setminus Q}\frac{1}{|\xi-1|^2}\operatorname{d}\!\sigma(\xi) \leq B_\sigma(1+K)^3\int_{E_+\cup E_-}\frac{1}{|1-\zeta|^2}\operatorname{d}\!m(\zeta).
\]
Now, by construction, the set $E_+\cup E_-$ is contained in $\overline{\mathbb{T}\setminus Q}$. The closure over the interval $\mathbb{T}\setminus Q$ appears because the endpoints of $Q$, \emph{a priori}, might belong to the spectrum $\rho(u)$, and hence there may exist a sequence of atoms in $A_{\operatorname{POS}}$ and/or $A_{\operatorname{NEG}}$ converging to an endpoint of~$Q$. Recall that the endpoints of $Q$ are $e^{ia},e^{ib}$ with $0<b\leq a<~2\pi$. A direct computation yields that

\begin{align*}
\int_{\mathbb{T}\setminus Q}\frac{1}{|\xi-1|^2}\operatorname{d}\!\sigma(\xi)&\leq B_\sigma(1+K)^3 \int_{\mathbb{T}\setminus Q}\frac{1}{|e^{it}-1|^2}\operatorname{d}\!m(e^{it}) \\
&= B_\sigma(1+K)^3\int_b^a \frac{1}{2(1-\cos t)}\operatorname{d}\!t\\&\leq B_\sigma(1+K)^3\left(\frac{1}{|1-e^{ia}|}+\frac{1}{|1-e^{ib}|}\right)\\
&\leq  2B_\sigma(1+K)^3 \frac{1}{\text{dist}(1, \mathbb{T}\setminus Q)},
\end{align*}
\begin{comment}
    \begin{align*}
\int_{\mathbb{T}\setminus Q}\frac{1}{|\xi-1|^2}\operatorname{d}\!\sigma(\xi)&\leq B_\sigma(1+K)^3 \int_{\mathbb{T}\setminus Q}\frac{1}{|e^{it}-1|^2}\operatorname{d}\!m(e^{it}) \\
%
&= B_\sigma(1+K)^3\int_b^a \frac{1}{2(1-\cos t)}\operatorname{d}\!t\\
%
&= \frac{B_\sigma(1+K)^3}{2}\left[- \frac{1+\cos t}{\sin t}\right]_b^a,
\end{align*}
where the function $\frac{1+\cos t}{\sin t}$ is intended to be $0$ when $t=\pi$. Now, since
\[
\sup_{0<t<2\pi}\frac{1+\cos t}{|\sin t|}|1-e^{it}|=2,
\] 
we see that
\begin{align*}
\int_{\mathbb{T}\setminus Q}\frac{1}{|\xi-\xi_i|^2}\operatorname{d}\!\sigma(\xi)&\leq B_\sigma(1+K)^3\left(\frac{1}{|1-e^{ia}|}+\frac{1}{|1-e^{ib}|}\right)\\
%
&\leq  2B_\sigma(1+K)^3 \frac{1}{\text{dist}(\xi_i, \mathbb{T}\setminus Q)},
\end{align*}
\end{comment}
concluding the proof.
\end{proof}
With similar techniques and under additional conditions (assuming for instance that the arc $(\xi_i^-,\xi_i^+)$ is contained in $Q$), one could also prove a reverse estimate to Lemma \ref{L:integralestimate}. However, we will not need it in this work.

\section{Proof of Theorem \ref{T:main2}} \label{S:proofT2}
Here we present the proof of Theorem \ref{T:main2}, that is essential for establishing Theorem \ref{T:main1}. This proof relies on an application of Tolsa's boundedness criterion to the operator $\mathcal{C}_\sigma$. We recall that our definition of $\mathcal{C}_\sigma$ corresponds to a truncated Cauchy transform; see \eqref{E:Cauchydef}.

\begin{theorem}[Tolsa, Theorem 1 of \cite{Tolsa2001}]\label{Tolsa}
The Cauchy transform $\mathcal{C}_\sigma$ is bounded on $L^2(\sigma)$ if and only if there exists a positive constant $C$ such that, for every arc $Q \subseteq \mathbb{T}$,
\begin{equation}\label{E:Tolsacondition}
\|\mathcal{C}_\sigma\chi_Q\|_{L^2(\sigma)}\leq C \sigma(Q)^{1/2}.
\end{equation}
\end{theorem}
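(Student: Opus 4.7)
The forward implication is immediate: if $\mathcal{C}_\sigma$ is bounded on $L^2(\sigma)$, then $\|\mathcal{C}_\sigma \chi_Q\|_{L^2(\sigma)} \le \|\mathcal{C}_\sigma\|\, \sigma(Q)^{1/2}$, so one may take $C = \|\mathcal{C}_\sigma\|$. All the substance is in the converse, which is a $T(1)$-type criterion that must be proved within the framework of non-homogeneous Calderón--Zygmund theory, since $\sigma$ (being singular) is typically non-doubling on $\mathbb{T}$.

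The plan for the converse is to realize $\mathcal{C}_\sigma$ as the singular integral associated to the kernel $K(z,w) = (1-\bar w z)^{-1}$ on $\mathbb{T}$, and to verify that this kernel satisfies the standard Calderón--Zygmund hypotheses: the size bound $|K(z,w)| \lesssim |z-w|^{-1}$ and the Hölder/Lipschitz smoothness $|K(z,w) - K(z',w)| \lesssim |z-z'|/|z-w|^2$ when $|z-z'| \le |z-w|/2$, together with its antisymmetric (Hermitian-antisymmetric) character. The finite measure $\sigma$ has trivial polynomial growth, but no doubling is assumed; the antisymmetry plays a decisive role because it identifies the formal adjoint $\mathcal{C}_\sigma^*$ with $-\mathcal{C}_\sigma$ up to a kernel modification, so that testing against characteristic functions automatically covers both $T$ and $T^*$.

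With these ingredients in place, the key step is to invoke the $T(1)$ theorem for non-homogeneous Calderón--Zygmund operators of Nazarov--Treil--Volberg, in the form refined by Tolsa to accommodate arbitrary non-doubling measures. This theorem reduces $L^2(\sigma)$-boundedness to the combination of (a) the weak boundedness property $|\langle \mathcal{C}_\sigma \chi_Q, \chi_Q\rangle_\sigma| \lesssim \sigma(Q)$, and (b) a $RBMO(\sigma)$-type control of $\mathcal{C}_\sigma 1$. Item (a) follows from \eqref{E:Tolsacondition} by Cauchy--Schwarz, while (b) follows from \eqref{E:Tolsacondition} by the usual splitting of the target into $2Q$ and $\mathbb{T}\setminus 2Q$ and exploiting the kernel smoothness on the far part.

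The main technical obstacle is controlling the operator on $L^2(\sigma)$ in the absence of a doubling condition: one cannot directly run a standard dyadic decomposition. The resolution is the \emph{random dyadic lattice} method, in which one constructs a Haar-type system on $\mathbb{T}$ adapted to a random partition, decomposes each function into a "good" part localized at cells well separated from boundaries and a "bad" part supported near boundaries, and uses the randomness of the lattice to show that the bad contributions have vanishing expected norm. The diagonal and paraproduct terms are then controlled precisely by \eqref{E:Tolsacondition}, while the off-diagonal terms are handled by the Calderón--Zygmund regularity of $K$. The detailed execution of this averaging scheme is carried out by Tolsa in \cite{Tolsa2001}; for the present statement the full argument is to be quoted rather than reproduced, and only the verification of its hypotheses in our setting is required.
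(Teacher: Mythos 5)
The statement you were asked about is not proved in the paper at all: it is quoted verbatim as Theorem~1 of \cite{Tolsa2001} and used as a black box in Section~\ref{S:proofT2}. Your forward implication is the trivial one ($\|\mathcal{C}_\sigma\chi_Q\|_{L^2(\sigma)}\leq\|\mathcal{C}_\sigma\|\,\sigma(Q)^{1/2}$ since $\|\chi_Q\|_{L^2(\sigma)}=\sigma(Q)^{1/2}$), and your decision to quote Tolsa for the substantive converse is exactly what the paper does, so at that level the two treatments coincide.

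That said, the sketch you wrap around the citation mischaracterizes how the converse is actually obtained, and if it were meant as a genuine argument it would have a gap. The nonhomogeneous $T(1)$ theorems of Nazarov--Treil--Volberg and Tolsa that you invoke (including the $\operatorname{RBMO}(\sigma)$ formulation, cf.\ \cite{TolsaMathAnn2001}) all assume a polynomial -- here linear -- growth condition $\sigma(B(x,r))\lesssim r$. A purely atomic measure, such as the Clark measure of a one-component inner function, violates this at every atom, so one cannot simply ``verify the hypotheses'' of those theorems, reduce to a weak boundedness property plus $\mathcal{C}_\sigma 1\in\operatorname{RBMO}(\sigma)$, and conclude. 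This is precisely the point the authors make in Section~\ref{S:proofT2}: a $T(1)$-type condition on $\mathcal{C}_\sigma 1$ alone is not known to imply \eqref{E:Tolsacondition} for purely atomic measures, and the strength of the quoted result of \cite{Tolsa2001} is exactly that it requires testing on characteristic functions of \emph{all} arcs and holds for arbitrary measures with no growth or doubling assumption. So citing Tolsa is appropriate and matches the paper, but your account of the underlying proof (random-lattice good/bad decomposition feeding into an $\operatorname{RBMO}$-based $T(1)$ theorem) describes the growth-restricted theory rather than the theorem actually being used, and should not be presented as the route by which the converse follows in this setting.
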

We point out that the arcs $Q$ that we are considering are free to be open, closed or semi-open.

In order to prove Theorem \ref{T:main2}, we thus need to establish that the conditions appearing in Theorem \ref{T:Bessonov} for one-component inner functions yield \eqref{E:Tolsacondition}. We emphasize that the fifth condition of that theorem is a $T(1)$-type condition. In certain situations, it is known that a $T(1)$ condition already implies a condition of type \eqref{E:Tolsacondition}; see for instance \cite{TolsaMathAnn2001} where the underlying measure satisfies a polynomial growth condition. However, to the best of our knowledge, such a result is not known for {\em purely atomic measures} that we need in this study. Moreover, our proof relies also on the other two conditions appearing in Theorem \ref{T:Bessonov}.

\begin{proof}[Proof of Theorem \ref{T:main2}]
If $\sigma(Q)=0$, both sides of \eqref{E:Tolsacondition} are zero. 

Let $Q$ be an arc that contains only one atom of $\sigma$, namely $\xi_i$. Thus,
\[
\|\mathcal{C}_\sigma\chi_Q\|_{L^2(\sigma)}^2=\sum_{j} \sigma_{\xi_j}\left|\sum_{n\neq j, \xi_n \in Q}\frac{\sigma_{\xi_n}}{1
-\xi_j\overline{\xi_n}}\right|^2
=\sum_{j\neq i}\sigma_{\xi_j}\left|\frac{\sigma_{\xi_i}}{1-\xi_j\overline{\xi_i}}\right|^2.
\]
Put $T=(\xi^-_i,\xi^+_i)$. Then, by the above calculation, we have
\begin{align*}
\|\mathcal{C}_\sigma\chi_Q\|_{L^2(\sigma)}^2
&= \sigma_{\xi_i}^2 \sum_{j\neq i}\sigma_{\xi_j}\left|\frac{1}{1-\xi_j\overline{\xi_i}}\right|^2\\
&= \sigma_{\xi_i}^2\int_{\mathbb{T}\setminus T} \frac{1}{|\xi-\xi_i|^2}\operatorname{d}\!\sigma(\xi)\\
&\lesssim  \sigma_{\xi_i}^2\text{max}\left(\frac{1}{|\xi_i-\xi^+_i|},\frac{1}{|\xi_i-\xi^-_i|}\right)\\
&\lesssim  \sigma_{\xi_i}=\sigma(Q),
\end{align*}
where in the first inequality we have used Lemma \ref{L:integralestimate} on the arc $T$ and in the second one we used the fourth condition of Theorem \ref{T:Bessonov}.

From now on we consider arcs $Q=[\xi_{Q,l},\xi_{Q,r}]$, which contain at least two atoms of $\sigma$. Notice that a priori the endpoints $\xi_{Q,l},\xi_{Q,r}$ might not be Clark atoms, but since we are estimating $\sigma(Q)$, we may assume without loss of generality that they are.

We split the exterior sum in two parts
\begin{align*}
&\sum_{j} \sigma_{\xi_j}\left| \sum_{i \neq j, \xi_i \in Q}\frac{1}{1-\xi_j\overline{\xi_i}}\sigma_{\xi_i}\right|^2=I+I\!I,
\end{align*}
where
\[
I =\sum_{\xi_j \in 2Q} \sigma_{\xi_j}\left| \sum_{i \neq j, \xi_i \in Q}\frac{1}{1-\xi_j\overline{\xi_i}}\sigma_{\xi_i}\right|^2,
\quad
I\!I=\sum_{\xi_j \in \mathbb{T}\setminus 2 Q} \sigma_{\xi_j}\left| \sum_{\xi_i \in Q}\frac{1}{1-\xi_j\overline{\xi_i}}\sigma_{\xi_i}\right|^2.
\]

For the summand $I\!I$, by the Cauchy-Schwarz inequality and Tonelli's Theorem, we have
\begin{align*}
\sum_{\xi_j \in \mathbb{T}\setminus 2 Q} \sigma_{\xi_j}\left| \sum_{ \xi_i \in Q}\frac{1}{1-\xi_j\overline{\xi_i}}\sigma_{\xi_i}\right|^2&\leq \sum_{\xi_j \in \mathbb{T}\setminus 2 Q} \sigma_{\xi_j}\sum_{ \xi_i \in Q}\frac{\sigma_{\xi_i}}{|\xi_i-\xi_j|^2}\sigma(Q)\\
& =\sigma(Q) \sum_{ \xi_i \in Q} \left(\sum_{\xi_j \in \mathbb{T}\setminus 2 Q} \frac{\sigma_{\xi_j}}{|\xi_i-\xi_j|^2}\right)\sigma_{\xi_i}.
\end{align*}

By Lemma \ref{L:integralestimate}, we have that for every $\xi_i\in Q$,
\[\sum_{\xi_j \in \mathbb{T}\setminus 2 Q} \frac{\sigma_{\xi_j}}{|\xi_i-\xi_j|^2}=\int_{\mathbb{T}\setminus 2Q}\frac{1}{|\xi_i-\xi|^2}\operatorname{d}\!\sigma(\xi)\lesssim \frac{1}{\operatorname{dist}(\xi_i,\mathbb{T}\setminus2Q)}\leq \frac{1}{|Q|/2},\]
where the constants involved only depend on $u$, and not on $\xi_i$ or $Q$.

It follows that
\begin{align*}
I\!I & \leq \sigma(Q) \sum_{ \xi_i \in Q} \left(\sum_{\xi_j \in \mathbb{T}\setminus 2Q} \frac{\sigma_{\xi_j}}{|\xi_i-\xi_j|^2}\right)\sigma_{\xi_i} \\
&\lesssim \sigma(Q) \frac{2}{|Q|}\sum_{\xi_i\in Q}\sigma_{\xi_i}\\
&\lesssim \sigma(Q),
\end{align*}
where in the third inequality Lemma \ref{Lemma misura} is used. 

For the summand $I$, we write
\begin{align*}
&\sum_{\xi_j \in 2Q} \sigma_{\xi_j}\left| \sum_{i \neq j, \xi_i \in Q}\frac{1}{1-\xi_j\overline{\xi_i}}\sigma_{\xi_i}\right|^2\leq I_a+I_b,
\end{align*}
where
\begin{align*}
I_a = 2\sum_{\xi_j \in 2Q} \sigma_{\xi_j}\left| \sum_{i \neq j, \xi_i \in Q}\left( \frac{1}{1-\xi_j\overline{\xi_i}}-\frac{1}{1-t_j\overline{\xi_i}}\right) \sigma_{\xi_i}\right|^2
\end{align*}
and
\begin{align*}
I_b=2\sum_{\xi_j \in 2Q} \sigma_{\xi_j}\left| \sum_{i \neq j, \xi_i \in Q}\frac{1}{1-t_j\overline{\xi_i}}\sigma_{\xi_i}\right|^2,
\end{align*}
with $u(t_j)=e^{2\pi i \frac{1}{N}}$ and $t_j\in(\xi_j,\xi_j^+)$. Hence
\begin{equation}\label{TO use}
|\xi_j-t_j|\asymp \frac{1}{|u'(\xi_j)|}\asymp \frac{1}{|u'(t_j)|},
\end{equation}
according to Lemma \ref{L:feichtinger}. Then, since $u(\xi_j)=1$, we have that
\begin{align*}
\frac{I_b}{2}&=\frac{1}{| 1-e^{2\pi i \frac{1}{N}}|^2} \sum_{\xi_j \in 2Q} \sigma_{\xi_j}| 1-u(t_j)|^2 \left| \sum_{i \neq j, \xi_i \in Q}\frac{1}{1-t_j\overline{\xi_i}}\sigma_{\xi_i}\right|^2\\
& \lesssim   \sum_{\xi_j \in 2Q} \sigma_{\xi_j}\left\vert 1-u(t_j)\overline{u(\xi_j)}\right\vert^2 \left| \sum_{\xi_i \in Q}\frac{1}{1-t_j\overline{\xi_i}}\sigma_{\xi_i}\right|^2 + \sum_{\xi_j \in 2Q} \sigma_{\xi_j} \left| \frac{1}{1-t_j\overline{\xi_j}}\sigma_{\xi_j}\right|^2\\
&= \sum_{\xi_j \in 2Q} \sigma_{\xi_j}|W_\sigma\chi_Q(t_j)|^2+ \sum_{\xi_j \in 2Q} \sigma_{\xi_j} \left| \frac{1}{1-t_j\overline{\xi_j}}\sigma_{\xi_j}\right|^2.
\end{align*}
In the first inequality we used the fact that $|1-e^{2\pi i \frac{1}{N}}|$ is a fixed constant. Due to Lemma \ref{Lemma misura}, we have the uniform estimate for the atoms of the two different Clark measures $\sigma_{\xi_j}\asymp\sigma_{t_j}^\frac{1}{N}$. Then
\[
\sum_{\xi_j \in 2Q} \sigma_{\xi_j}|W_\sigma\chi_Q(t_j)|^2\lesssim\int_\mathbb{T} \left\vert W_\sigma\chi_Q(t)\right\vert^2 \operatorname{d}\!\sigma^{\frac{1}{N}}(t) = \|W_\sigma \chi_Q\|_{H^2}^2=\sigma(Q).
\]
Also, by \eqref{TO use},
\[
\sum_{\xi_j \in 2Q} \sigma_{\xi_j} \left| \frac{1}{1-t_j\overline{\xi_j}}\sigma_{\xi_j}\right|^2
\lesssim \sum_{\xi_j \in 2Q}  \sigma_{\xi_j}^3|u'(\xi_j)|^2 = \sigma(2Q).
\]
It follows that
\begin{align*}
\frac{I_b}{2}  \lesssim\sigma(Q)+ \sigma(2Q) \lesssim \sigma(Q).
\end{align*}

Finally, we observe that
\begin{align*}
\frac{I_a}{2}&=\sum_{\xi_j \in 2Q} \sigma_{\xi_j}\left| \sum_{i \neq j, \xi_i \in Q}\left( \frac{1}{1-\xi_j\overline{\xi_i}}-\frac{1}{1-t_j\overline{\xi_i}}\right)\sigma_{\xi_i}\right|^2\\
&\leq \sum_{\xi_j \in 2Q} \sigma_{\xi_j}|\xi_j-t_j|^2\left( \sum_{i \neq j, \xi_i \in Q} \frac{1}{|1-\xi_j\overline{\xi_i}||1-t_j\overline{\xi_i}|}\sigma_{\xi_i}\right)^2\\
& \leq \sum_{\xi_j \in 2Q} \sigma_{\xi_j}|\xi_j-t_j|^2 \bigg(\sup_{i \neq j, \xi_i \in Q}\left\vert \frac{1-t_j\overline{\xi_i}}{1-\xi_j\overline{\xi_i}}\right\vert\bigg)^2\left( \sum_{i \neq j, \xi_i \in Q} \frac{1}{|1-t_j\overline{\xi_i}|^2}\sigma_{\xi_i}\right)^2.
\end{align*}
Again, using the fact that $|1-u(t_j)\overline{u(\xi_i)}|$ is a fixed constant, so that
\begin{align*}
\sum_{i \neq j, \xi_i \in Q} \frac{1}{|1-t_j\overline{\xi_i}|^2}\sigma_{\xi_i}
&= c\sum_{i \neq j, \xi_i \in Q} \frac{|1-u(t_j)\overline{u(\xi_i)}|^2}{|1-t_j\overline{\xi_i}|^2}\sigma_{\xi_i}\\
&\le c \|W_\sigma^{-1} k_{t_j}^u\|^2_{L^2(\sigma)}=c\|k_{t_j}^u\|_{H^2}^2,
\end{align*}
we get
\begin{align*}
\frac{I_a}{2}&\lesssim \sum_{\xi_j \in 2Q} \sigma_{\xi_j}|\xi_j-t_j|^2 \|k^{u}_{t_j}\|_{H^2}^4 \sup_{i \neq j, \xi_i \in Q}\left\vert \frac{1-t_j\overline{\xi_i}}{1-\xi_j\overline{\xi_i}}\right\vert^2\\
&\lesssim \left( \sup_{\xi_j \in 2Q} |\xi_j-t_j|^2 |u'(t_j)|^2 \right) \left( \sup_{\xi_j \in 2Q} \sup_{i \neq j, \xi_i \in Q}\left\vert \frac{1-t_j\overline{\xi_i}}{1-\xi_j\overline{\xi_i}}\right\vert^2 \right)\sigma(2Q).
\end{align*}
Now, on the one hand, by \eqref{TO use}
\[\left( \sup_{\xi_j \in 2Q} |\xi_j-t_j|^2 |u'(t_j)|^2\right)\lesssim 1.\]
On the other hand, for every $\xi_j$ and every $\xi_i$ such that $|\xi_i-t_j|\geq |\xi_i-\xi_j|$, we have that
\begin{align*}
\sup_{i \neq j, \xi_i \in Q}\left\vert \frac{\xi_i-t_j}{\xi_i-\xi_j}\right\vert&\leq 1+ \sup_{i \neq j, \xi_i \in Q}\left\vert \frac{\xi_j-t_j}{\xi_i-\xi_j}\right\vert\leq 1+ \sup_{i \neq j, \xi_i \in Q}\left\vert \frac{\xi_j-\xi_{j}^+}{\xi_i-\xi_j}\right\vert\\
   &\leq 1+ \frac{B_\sigma}{A_\sigma} \sup_{i \neq j, \xi_i \in Q}\left\vert \frac{\xi_j-\xi_{j}^-}{\xi_i-\xi_j}\right\vert\lesssim 1,
\end{align*}
uniformly in $j$, using the definition of neighboring atoms. We conclude that $I_a\lesssim \sigma(2Q)\lesssim\sigma(Q)$. Hence, we get \eqref{Tolsa} for every arc $Q$. By applying Theorem \ref{Tolsa}, we conclude the proof.
\end{proof}

We emphasize that the relationship between the inner function $u$ and the Cauchy transform $\mathcal{C}_\sigma$ is far from being fully understood. In this regard, we conclude this section with an open problem.

\begin{open} Let $\sigma$ be the Clark measure associated with $u$. Is it true that if $\mathcal{C}_\sigma$ is bounded on $L^2(\sigma)$, then $u$ is a one-component inner function?
\end{open}

If $\sigma$ is not singular, we refer the reader to \cite[Theorem 1.7]{Bellavita2022O} for the analogue of Theorem \ref{T:Bessonov}.

\section{Proof of Theorem \ref{T:main1}: Necessary conditions} \label{S:proofT1measure}
In this section we start the proof of Theorem \ref{T:main1}: we show that, under its hypotheses, if we have the set identity $\mathcal{H}(b)= \mathcal{D_\mu}$, then $\mu$ is necessarily defined as in \eqref{E:main1expressionmu} and it satisfies \eqref{E:main1atoms}, \eqref{E:main1potential}. We highlight that whenever $\mathcal{H}(b)= \mathcal{D_\mu}$, by the closed graph theorem, we also have an equivalence of the norms.

The interplay between the Pythagorean mate $a$ and the potential $V_\mu$ defined in \eqref{E:potentialdef} is crucial in this study. We will use the following obvious estimates for $V_\mu$:
\begin{equation} \label{E:Properties of Vmu}
\frac{\mu(\mathbb{T})}{(1+|w|)^2}\leq V_\mu(w)\leq \frac{\mu(\mathbb{T})}{\text{dist}(w, \operatorname{supp}\mu)^2}, \qquad w\in\mathbb{D}.
\end{equation}

The following lemma is an adaptation of results that first appeared in \cite{costara2013}. We present them here in a form suited to our purposes and include a proof for the reader’s convenience. Recall that $X \hookrightarrow Y$ means that $X$ is boundedly embedded in $Y$.

\begin{lemma}\label{L:Costara}
Let $(b,a)$ be a Pythagorean pair and $\mu$ a measure on $\mathbb{T}$. Then the following statements hold.
\begin{enumerate}[(i)]
\item If $\mathcal{H}(b)\hookrightarrow\mathcal{D}_\mu$, then
\[
|a(w)|^2 V_\mu(w) \lesssim |a(w)|^2+|b(w)|^2,
\]
uniformly for $w \in \mathbb{D}$.
\item If $\mathcal{D}_\mu \hookrightarrow \mathcal{H}(b)$, then
\[
|a(w)|^2+|b(w)|^2 \lesssim |a(w)|^2 V_\mu(w)
\]
uniformly for $w \in \mathbb{D}$.
\end{enumerate}
\end{lemma}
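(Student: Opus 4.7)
The plan is to apply each embedding hypothesis to a trial function whose $\mathcal{H}(b)$- (respectively $\mathcal{D}_\mu$-) norm can be computed explicitly, and then extract a pointwise estimate via the reproducing-kernel inequality in the target space. Norms in $\mathcal{H}(b)$ are controlled through Lemma~\ref{L:FricainGrivaux}, while the basic lower bound
\[
\mathcal{D}_\zeta(f)=\Bigl\|\frac{f(z)-f(\zeta)}{z-\zeta}\Bigr\|_{H^2}^{2}\;\geq\;(1-|w|^2)\frac{|f(w)-f(\zeta)|^2}{|w-\zeta|^2},\qquad w\in\mathbb{D},
\]
obtained by applying the $H^2$-pointwise estimate to the Dirichlet difference quotient, transfers $\mathcal{D}_\mu$-norm information into pointwise information at $w$.

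For part~(i), fix $w\in\mathbb{D}$ and take $f_w(z):=a(z)/(1-z\bar w)$. Writing $a=\gamma(1-u)/2$ shows $f_w\in(1-u)H^2\subseteq\mathcal{H}(b)$, and Lemma~\ref{L:FricainGrivaux} gives $\|f_w\|_{\mathcal{H}(b)}^{2}\asymp(1-|w|^2)^{-1}$. The embedding then forces $\mathcal{D}_\mu(f_w)\lesssim(1-|w|^2)^{-1}$. In the setting of this paper the measure $\mu$ is supported on Clark atoms $\zeta_n$ with $u(\zeta_n)=1$, so $a(\zeta_n)=f_w(\zeta_n)=0$, and the displayed auxiliary bound yields
\[
\mathcal{D}_\mu(f_w)\;\geq\;\sum_{n}\mu_n(1-|w|^2)\frac{|f_w(w)|^{2}}{|w-\zeta_n|^{2}}\;=\;\frac{|a(w)|^2V_\mu(w)}{1-|w|^2}.
\]
Combining the two inequalities produces $|a(w)|^2V_\mu(w)\lesssim 1\asymp|a(w)|^2+|b(w)|^2$, the last comparison being the Corona property of the pair $(b,a)$. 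For a general $\mu$ the same trial function, tested along the radius approaching any $\zeta_0\in\operatorname{supp}\mu$ with $a(\zeta_0)\neq 0$, gives via the displayed bound a lower estimate $\mathcal{D}_{\zeta_0}(f_w)\gtrsim|a(\zeta_0)|^2/(1-|w|)^3$, which is incompatible with the upper bound $\mathcal{D}_\mu(f_w)\lesssim(1-|w|^2)^{-1}$; thus the embedding already forces $\operatorname{supp}\mu\subseteq\{a=0\}\cap\mathbb{T}$, and we are reduced to the previous case.

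For part~(ii), take $g_w(z):=1/(1-z\bar w)\in\mathcal{D}_\mu$. The elementary identity $(K_w^{H^2}(z)-K_w^{H^2}(\zeta))/(z-\zeta)=\bar w/((1-z\bar w)(1-\zeta\bar w))$ yields $\mathcal{D}_\mu(g_w)=|w|^2V_\mu(w)/(1-|w|^2)$, hence $\|g_w\|_{\mathcal{D}_\mu}^{2}\asymp(1+V_\mu(w))/(1-|w|^2)$. Applying the embedding together with the $\mathcal{H}(b)$-pointwise bound $|g_w(w)|^2\leq\|g_w\|_{\mathcal{H}(b)}^{2}\,K_w^b(w)$ and $K_w^b(w)=(1-|b(w)|^2)/(1-|w|^2)$ produces $1\lesssim(1+V_\mu(w))(1-|b(w)|^2)$. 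The final step, passing from $1-|b|^2$ to $|a|^2$ on the right-hand side, uses the identity $1-|b(w)|^2=|a(w)|^2+(1-|u(w)|^2)/2$, valid inside $\mathbb{D}$ for $b=(1+u)/2$, together with the Corona condition $|a|^2+|b|^2\gtrsim 1$, to conclude $|a(w)|^2V_\mu(w)\gtrsim 1\asymp|a(w)|^2+|b(w)|^2$.

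The principal obstacle is this last upgrade: although $1-|b|^2=|a|^2$ on $\mathbb{T}$, inside $\mathbb{D}$ the two quantities can differ noticeably -- especially in the non-tangential approach to Clark atoms -- so some care is required to absorb the correction $(1-|u|^2)/2$ and match the Corona lower bound on $|a|^2+|b|^2$.
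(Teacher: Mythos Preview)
Your argument has two genuine gaps. First, the lemma is stated for an arbitrary Pythagorean pair $(b,a)$ and an arbitrary finite measure $\mu$, but you immediately specialize to $b=(1+u)/2$, invoke Lemma~\ref{L:FricainGrivaux}, and use the Corona property---none of which is available in the general setting. Your reduction of a general $\mu$ to one supported on $\{a=0\}$ is also not valid as written: the radial blow-up estimate $\mathcal{D}_{\zeta_0}(f_w)\gtrsim |a(\zeta_0)|^2/(1-|w|)^3$ only kills \emph{atoms} of $\mu$ at points where the radial limit $a(\zeta_0)$ exists and is nonzero; it says nothing about the continuous part of $\mu$, nor about points in $\rho(b)$.

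Second, and more seriously, your proof of~(ii) is incomplete even in the special case, as you yourself flag. The reproducing-kernel inequality only gives a lower bound on $\|g_w\|_{\mathcal H(b)}^2$ in terms of $1-|b(w)|^2$, and inside $\mathbb D$ one has $1-|b(w)|^2=|a(w)|^2+(1-|u(w)|^2)/2\ge |a(w)|^2$, so your inequality $1\lesssim(1+V_\mu(w))(1-|b(w)|^2)$ is strictly \emph{weaker} than the required $1\lesssim |a(w)|^2 V_\mu(w)$; there is no way to absorb the term $(1-|u|^2)V_\mu$ without additional input. The missing idea is the exact identity (due to Costara--Ransford)
\[
\|c_w\|_{\mathcal H(b)}^2=\frac{1+|b(w)/a(w)|^2}{1-|w|^2}=\frac{|a(w)|^2+|b(w)|^2}{|a(w)|^2(1-|w|^2)},
\]
valid for any Pythagorean pair. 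Using this together with the companion formula $\|c_w\|_{\mathcal D_\mu}^2=(1+|w|^2V_\mu(w))/(1-|w|^2)$, both directions follow in one line by comparing norms of the single test function $c_w$, with no need for Corona, for the decomposition of $\mathcal H(b)$, or for any hypothesis on the support of $\mu$.
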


\begin{proof}
In \cite{costara2013}, the authors computed the $\mathcal{H}(b)$ and $\mathcal{D}_\mu$ norms of the Cauchy-Szeg\"o kernels
\[
c_w(z)= \frac{1}{1-\overline{w}z}, \qquad w,z\in\mathbb{D}.
\]
In particular, they showed that
\begin{equation}
\|c_w\|_{\mathcal{H}(b)}^2 = \frac{1+|b(w)/a(w)|^2}{1-|w|^2},\quad
\|c_w\|_{\mathcal{D}_\mu}^2 =\frac{1+|w|^2V_\mu(w)}{1-|w|^2}.
\end{equation}
Then, whenever $\mathcal{H}(b)\hookrightarrow\mathcal{D}_\mu$, it follows that
\[
|a(w)|^2(1+|w|^2 V_\mu(w)) \lesssim |a(w)|^2+|b(w)|^2,
\]
and when $\mathcal{D}_\mu\hookrightarrow \mathcal{H}(b),$
\[
|a(w)|^2+|b(w)|^2 \lesssim |a(w)|^2(1+|w|^2 V_\mu(w)).
\]

To conclude, it is enough to show that $V_\mu(w)\asymp (1+|w|^2V_\mu(w))$ uniformly in $\mathbb{D}$. By \eqref{E:Properties of Vmu},
\begin{equation*}
V_\mu(w)\geq \frac{\mu(\mathbb{T})}{4}, \qquad w\in\mathbb{C},
\end{equation*}
so that
\[
1+|w|^2V_\mu(w)\leq V_\mu(w)\left(\frac{4}{\mu(\mathbb{T})}+1\right).
\]
Splitting the disk into two subsets $|w|\leq1/2$ and $|w|>1/2$, and with the second inequality of \eqref{E:Properties of Vmu} in mind for the case $|w|\leq 1/2$, we see that
\[
V_\mu(w)\leq \max\left(4\mu(\mathbb{T}),4|w|^2V_\mu(w) \right)\leq \max\left(4\mu(\mathbb{T}),4 \right)\left( 1+|w|^2V_\mu(w)\right).
\]
\end{proof}

Notice that Lemma \ref{L:Costara} directly implies the necessity of condition \eqref{E:main1potential}. In the rest of this section we prove that $\mu$ is necessarily of the form \eqref{E:main1expressionmu} and satisfies \eqref{E:main1atoms}. For this, we recall first that in \cite{bellavita2023embedding}, the authors established some relationship between the boundary spectrum of $b$ and the measure $\mu$. This result was further generalized in \cite[Theorem 3.3]{Eugenionext}.

\begin{lemma}[Dellepiane--Peloso--Tabacco, \cite{Eugenionext}]\label{Spectrum}
Let $\mu$ be a finite positive Borel measure on $\mathbb{T}$ and let $b$ be a bounded analytic function with $\|b\|_{H^\infty}=1$.
If the embedding $\mathcal{H}(b) \hookrightarrow \mathcal{D}_\mu$ holds, then $\mu(\rho(b))=0$.
\end{lemma}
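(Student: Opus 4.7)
The argument is by contradiction. Suppose $\mathcal{H}(b)\hookrightarrow\mathcal{D}_\mu$ with constant $C$, yet $\mu(\rho(b))>0$. By inner regularity of the finite Borel measure $\mu$ and the closedness of $\rho(b)$ in $\mathbb{T}$, fix a compact set $K\subseteq\rho(b)$ with $\mu(K)>0$. The plan is then to probe the embedding on the reproducing kernels of $\mathcal{H}(b)$ at base points approaching $K$, which will produce an incompatibility between the $\mathcal{H}(b)$-norm growth and a lower bound on the $\mathcal{D}_\mu$-norm coming from Carleson--Richter disintegration.

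Starting point: the identity
\[
\|k^b_\lambda\|_{\mathcal{H}(b)}^2=k^b_\lambda(\lambda)=\frac{1-|b(\lambda)|^2}{1-|\lambda|^2}.
\]
For every $\zeta\in K\subseteq\rho(b)$, the condition $\liminf_{z\to\zeta}|b(z)|<1$ produces a non-tangential sequence $\lambda_n(\zeta)\to\zeta$ along which $|b(\lambda_n(\zeta))|\leq 1-\delta_\zeta$. A standard compactness/measurable-selection argument on $K$ yields a uniform $\delta>0$ and a family of base points $\lambda_n\to K$ for which $\|k^b_{\lambda_n}\|_{\mathcal{H}(b)}^2\asymp (1-|\lambda_n|^2)^{-1}$. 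The embedding hypothesis then forces
\[
\|k^b_{\lambda_n}\|_{\mathcal{D}_\mu}^2\leq C^2\,\|k^b_{\lambda_n}\|_{\mathcal{H}(b)}^2\lesssim (1-|\lambda_n|^2)^{-1}.
\]

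The heart of the proof is a matching lower bound on $\mathcal{D}_\mu(k^b_{\lambda_n})$ that overshoots this rate. Via the Carleson--Richter disintegration
\[
\mathcal{D}_\mu(f)=\int_{\mathbb{T}}\mathcal{D}_\zeta(f)\,d\mu(\zeta),\qquad \mathcal{D}_\zeta(f)=\int_{\mathbb{T}}\frac{|f(\eta)-f(\zeta)|^2}{|\eta-\zeta|^2}\,dm(\eta),
\]
combined with the boundary expression $k^b_{\lambda_n}(\eta)=(1-b(\eta)\,\overline{b(\lambda_n)})/(1-\eta\overline{\lambda_n})$, I estimate $\mathcal{D}_\zeta(k^b_{\lambda_n})$ from below for $\zeta\in K$ lying in a Carleson-type window around $\lambda_n/|\lambda_n|$. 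Since $|b(\lambda_n)|\leq 1-\delta$ and $|b(\eta)|\leq 1$ a.e. on $\mathbb{T}$, the numerator $|1-b(\eta)\overline{b(\lambda_n)}|$ stays above $\delta$, while the pole of $(1-\eta\overline{\lambda_n})^{-1}$ produces a boundary-integrable bulk contribution near $\eta\simeq\lambda_n/|\lambda_n|$. Arranging the sequence so that the corresponding boundary arcs capture a definite fraction of the $\mu$-mass of $K$ (for instance by exhausting $K$ with Whitney arcs and passing to $\mu$-positive-measure subsequences), the resulting integrated lower bound will be of strictly higher order than $(1-|\lambda_n|^2)^{-1}$, yielding the sought contradiction.

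The main obstacle is calibrating the Carleson scale so that the pointwise lower bound on $\mathcal{D}_\zeta(k^b_{\lambda_n})$ genuinely beats the $\mathcal{H}(b)$-rate while still controlling the subtraction of $k^b_{\lambda_n}(\zeta)$, whose modulus is itself large when $\zeta$ is close to $\lambda_n/|\lambda_n|$. If this direct reproducing-kernel route proves too delicate for a general $b$ (note that $b$ may fail to have any useful non-tangential limit on $K$), an alternative is to replace $k^b_{\lambda_n}$ by localized test functions of the form $(1-b\,\overline{b(\lambda_n)})\,c_{\lambda_n}$ truncated by Szeg\H{o}-type cutoffs supported near $K$, as in the approach of \cite{bellavita2023embedding,Eugenionext}; this separates the analytic and boundary estimates and makes the concentration phenomenon more transparent.
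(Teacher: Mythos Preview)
The paper does not give its own proof of this lemma; it is quoted from \cite{Eugenionext} and used as a black box. So there is no ``paper's proof'' to compare against, and your proposal must be judged on its own.

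As written, it is not a proof but a strategy with an explicitly flagged hole. You correctly observe that along a sequence $\lambda_n\to\zeta_0\in\rho(b)$ with $|b(\lambda_n)|\le 1-\delta$ one has $\|k^b_{\lambda_n}\|_{\mathcal{H}(b)}^2\asymp(1-|\lambda_n|^2)^{-1}$, and that the embedding forces $\|k^b_{\lambda_n}\|_{\mathcal{D}_\mu}^2\lesssim(1-|\lambda_n|^2)^{-1}$. But the crucial step---producing a lower bound on $\mathcal{D}_\mu(k^b_{\lambda_n})$ of \emph{strictly higher} order---is never carried out. You assert that the boundary pole of $(1-\overline\lambda_n\eta)^{-1}$ will dominate, yet in the local Dirichlet integrand the term $k^b_{\lambda_n}(\zeta)$, which you must subtract, is itself of size $\asymp|\zeta-\lambda_n|^{-1}$ and can cancel the bulk contribution; you acknowledge this as ``the main obstacle'' without resolving it. The measurable-selection step yielding a \emph{uniform} $\delta>0$ over a $\mu$-positive subset of $K$ is also only gestured at, and your ``alternative'' in the last paragraph is a reference, not an argument.

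There is a much more direct route that avoids these difficulties. For each $\zeta_0\in\rho(b)$ one has $\|k^b_{z}\|_{\mathcal{H}(b)}^2=(1-|b(z)|^2)/(1-|z|^2)\to\infty$ along the sequence with $|b(z_n)|\le 1-\delta$, so point evaluation at $\zeta_0$ is \emph{unbounded} on $\mathcal{H}(b)$. On the other hand, for $\mu$-a.e.\ $\zeta$ one has $\mathcal{D}_\zeta(f)<\infty$ for every $f\in\mathcal{D}_\mu$, and on the local Dirichlet space $\mathcal{D}_\zeta$ the evaluation $f\mapsto f(\zeta)$ is bounded. Composing with the embedding $\mathcal{H}(b)\hookrightarrow\mathcal{D}_\mu\hookrightarrow\mathcal{D}_\zeta$ would make evaluation at such $\zeta$ bounded on $\mathcal{H}(b)$, so $\mu$-a.e.\ point of $\mathbb{T}$ lies outside $\rho(b)$; that is, $\mu(\rho(b))=0$. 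This is the kind of argument used in \cite{bellavita2023embedding,Eugenionext}, and it sidesteps the delicate quantitative lower bound you were unable to establish.
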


Studying the equality $\mathcal{H}(b)=\mathcal{D}_\mu$ requires the introduction of the boundary zero set
\[
\mathcal{Z}_\mathbb{T}(a)=\{\lambda \in \mathbb{T} \ \colon \ \lim_{r \to 1}a(r\lambda)=0\}.
\] 
In what follows, we will just write $\mathcal{Z}(a)$, omitting $\mathbb{T}$ from the notation.

\begin{theorem}\label{Tza}
Let $(b,a)$ be a Corona pair, and let $\mu$ be a finite positive Borel measure on $\mathbb{T}$. If $\mathcal{H}(b)= \mathcal{D_\mu}$, then \[\operatorname{supp} (\mu) = \overline{\mathcal{Z}(a)}.\]
\end{theorem}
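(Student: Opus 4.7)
The plan is to combine Lemma \ref{L:Costara}, the Corona lower bound, and Lemma \ref{Spectrum} to prove the two inclusions $\overline{\mathcal{Z}(a)} \subseteq \operatorname{supp}(\mu)$ and $\operatorname{supp}(\mu) \subseteq \overline{\mathcal{Z}(a)}$. Since $\mathcal{H}(b) = \mathcal{D}_\mu$ as sets, the closed graph theorem yields equivalence of norms, so both embeddings in Lemma \ref{L:Costara} apply simultaneously and give
\[
|a(w)|^2\, V_\mu(w) \asymp |a(w)|^2 + |b(w)|^2, \qquad w \in \mathbb{D}.
\]
The upper bound $|a|^2 + |b|^2 \leq 1$ on $\mathbb{D}$ follows from subharmonicity together with boundary values equal to $1$ almost everywhere, while the Corona hypothesis gives $|a|^2 + |b|^2 \geq \delta > 0$. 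Together these pin down $V_\mu(w) \asymp 1/|a(w)|^2$ throughout $\mathbb{D}$.

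For the inclusion $\overline{\mathcal{Z}(a)} \subseteq \operatorname{supp}(\mu)$, I would fix $\lambda \in \mathcal{Z}(a)$ so that $a(r\lambda) \to 0$ as $r \to 1^{-}$, and conclude $V_\mu(r\lambda) \to \infty$ from the equivalence above. If $\lambda$ possessed an arc-neighborhood disjoint from $\operatorname{supp}(\mu)$, then the integrand $|r\lambda - \xi|^{-2}$ in \eqref{E:potentialdef} would be uniformly bounded on $\operatorname{supp}(\mu)$ as $r \to 1^-$, keeping $V_\mu(r\lambda)$ bounded — a contradiction. Hence $\mathcal{Z}(a) \subseteq \operatorname{supp}(\mu)$, and closedness of the support transfers this to $\overline{\mathcal{Z}(a)}$.

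The reverse inclusion needs a different tactic, since $V_\mu$ need not blow up at an arbitrary support point of $\mu$. Here I would invoke Lemma \ref{Spectrum} to get $\mu(\rho(b)) = 0$. A short verification — based on the upper semicontinuity of $\zeta \mapsto \liminf_{z \to \zeta}|b(z)|$ on $\mathbb{T}$ — shows that $\rho(b)$ is relatively open, so an open set of $\mu$-measure zero cannot meet the support: $\operatorname{supp}(\mu) \cap \rho(b) = \emptyset$. Consequently every $\zeta \in \operatorname{supp}(\mu)$ satisfies $\liminf_{z \to \zeta}|b(z)| = 1$; combined with $|a|^2 + |b|^2 \leq 1$ in $\mathbb{D}$, this forces $|a(z)| \to 0$ as $z \to \zeta$, in particular the radial limit of $a$ at $\zeta$ vanishes, giving $\zeta \in \mathcal{Z}(a)$.

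The main obstacle I anticipate lies in this second inclusion: the equivalence $V_\mu \asymp 1/|a|^2$ is not by itself sufficient, and one genuinely needs the $H^\infty$-constraint $|a|^2 + |b|^2 \leq 1$ in the disk, the openness of $\rho(b)$, and the spectral input from Lemma \ref{Spectrum} in order to convert the $\liminf$-condition on $|b|$ into full radial vanishing of $a$ at every support point. The first inclusion, by contrast, is a fairly direct consequence of the potential asymptotics together with the Corona hypothesis.
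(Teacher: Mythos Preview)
Your argument for $\overline{\mathcal{Z}(a)} \subseteq \operatorname{supp}(\mu)$ is correct and is essentially the paper's (phrased there as the contrapositive via $\operatorname{dist}(w,\operatorname{supp}\mu)^2\lesssim|a(w)|^2$). The gap is in the reverse inclusion. The map $g(\zeta)=\liminf_{z\to\zeta}|b(z)|$ is \emph{lower} semicontinuous, not upper: if $g(\zeta_0)>c$ then $|b|>c$ on some $B(\zeta_0,\delta)\cap\mathbb{D}$, and hence $g>c$ on a neighborhood of $\zeta_0$. So $\rho(b)=\{g<1\}$ need not be open, and the step ``$\mu(\rho(b))=0\Rightarrow\operatorname{supp}(\mu)\cap\rho(b)=\emptyset$'' fails. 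Concretely, for $b=(1+u)/2$ with $u(z)=\exp\!\big((z+1)/(z-1)\big)$ one has $\rho(b)=\mathbb{T}\setminus\{\zeta_n\}_n$ (the Clark atoms); the accumulation point $1$ lies in both $\rho(b)$ and $\operatorname{supp}(\mu)$, yet $a(r)\to\gamma/2\neq 0$, so $1\notin\mathcal{Z}(a)$. Your chain would force $\operatorname{supp}(\mu)\subseteq\mathcal{Z}(a)$ without the closure, which this example shows is false.

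Your route through Lemma~\ref{Spectrum} can be repaired without the openness claim: from your correct observation $\mathbb{T}\setminus\rho(b)\subseteq\mathcal{Z}(a)$ one gets $\mathbb{T}\setminus\mathcal{Z}(a)\subseteq\rho(b)$, so $\mu(\rho(b))=0$ already yields $\mu(\mathbb{T}\setminus\mathcal{Z}(a))=0$ and hence $\operatorname{supp}(\mu)\subseteq\overline{\mathcal{Z}(a)}$. The paper takes a different and more self-contained path for this inclusion: from Lemma~\ref{L:Costara}(i) and Fatou's lemma it shows $V_\mu(\zeta)<\infty$ for every $\zeta\notin\mathcal{Z}(a)$, then invokes the standard fact that $V_\mu=\infty$ $\mu$-a.e.\ to conclude $\mu(\mathbb{T}\setminus\mathcal{Z}(a))=0$. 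That argument avoids Lemma~\ref{Spectrum} entirely and, as the paper notes, does not use the Corona hypothesis for this half.
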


\begin{proof}
Let $\zeta$ be in $\mathbb{T}\setminus\mathcal{Z}(a)$. Then, since $\limsup_{r\to 1} |a(r\zeta)|>0$, there exist $\epsilon>0$ and a sequence $(r_n)_n$ converging to $1$ such that $\lim_{n}|a(r_n\zeta)|>\epsilon$. From Lemma \ref{L:Costara}(i) and Fatou's Lemma, we note that, if $\mathcal{H}(b)=\mathcal{D}_\mu$, then
\[
V_\mu(\zeta) \leq \liminf_{n} V_\mu(r_n\zeta )\lesssim \lim_{n}\frac{1}{|a(r_n\zeta)|^2}<\infty.
\]
Consequently,
\[
\mathbb{T}\setminus \mathcal{Z}(a)\subseteq \{\zeta \in \mathbb{T}\ \colon V_\mu(\zeta)<\infty\}.
\]
Since $V_\mu = \infty$ $\mu$-a.e., then $\mu(\mathbb{T}\setminus \mathcal{Z}(a)) = 0$, implying that $\mathcal{Z}(a)$ is a carrier for $\mu$.
Therefore, by the definition of support, we have also that $\operatorname{supp}(\mu)\subseteq \overline{\mathcal{Z}(a)}$. Notice that for this set inclusion, it was not needed that $(b,a)$ forms a Corona pair.

For the other inclusion, we consider $\zeta \in \mathbb{T}\setminus \operatorname{supp}(\mu)$. By \eqref{E:Properties of Vmu}, Lemma \ref{L:Costara}(ii) and the assumption that $(b,a)$ forms a Corona pair,
\begin{equation}\label{Zeq1}
\operatorname{dist}(w, \operatorname{supp}(\mu))^2 \leq \frac{\mu(\mathbb{T})}{V_\mu(w)} \asymp \frac{\mu(\mathbb{T}) |a(w)|^2}{|a(w)|^2+|b(w)|^2} \lesssim |a(w)|^2
\end{equation}
for every $w\in\mathbb{D}$. Then
\[
0< \liminf_{r\to1}\operatorname{dist}(r\zeta, \operatorname{supp}(\mu))\lesssim \liminf_{r\to1} |a(r\zeta)|,
\]
proving that $\zeta \notin \mathcal{Z}(a)$. Hence, $ \mathbb{T}\setminus \operatorname{supp}(\mu) \subset \mathbb{T}\setminus \mathcal{Z}(a)$, which is equivalent to $\mathcal{Z}(a) \subset \operatorname{supp}(\mu)$.
\end{proof}

We are now in a position to prove the necessity of \eqref{E:main1expressionmu} and \eqref{E:main1atoms}. Since $b=(1+u)/2$ and $a=\gamma(1-u)/2$, due to Theorem \ref{Tza}, if $\mathcal{H}(b)=\mathcal{D}_\mu$, then
\[
\operatorname{supp}(\mu) = \overline{\mathcal{Z}(a)} = \overline{\{\zeta\in\mathbb{T} \colon \lim_{r\to 1} u(r\zeta)=1\}}.
\]
Using the one-component condition and Theorem 1.11 of \cite{Aleksandrov2000}, all the points that satisfy $\lim_{r\to 1} u(r\zeta)=1$ lie outside the spectrum $\rho(u)$. Then, the function $u$ is analytic in every such point, and in particular they are all atoms for $\sigma=\sigma^0$, the Clark measure associated to $u$. It follows that
\[
\operatorname{supp}(\mu) = \overline{\mathcal{Z}(a)} = \overline{\{\zeta\in\mathbb{T}\setminus\rho(u) \colon \lim_{r\to 1} u(r\zeta)=1\}}=\operatorname{supp}(\sigma).
\]
Since $\tau(\sigma)=\rho(u)\subset \rho(b)$ and $\mu(\rho(b))=0$, we have also that $\mu(\tau(\sigma))=0$. Consequently, $\mu$ is a purely atomic measure of the form
\begin{equation}\label{E:expressionmugeneralmasses}
  \mu=\sum_{n}\mu_n \delta_{\zeta_n},
\end{equation}
which yields \eqref{E:main1expressionmu}. Notice that $\{\zeta_n\}_n$ consists of isolated points and $\sum_n \mu_n = \mu(\mathbb{T})<\infty$. The following lemma is needed in order to estimate the masses $\mu_n$.

\begin{lemma}\label{L:Lemma5.1(1)}
Let $u$ be a one-component inner function and $\mu$ a measure having the same support as the zero Clark measure $\sigma$ of $u$, as in \eqref{E:expressionmugeneralmasses}. Then, for every atom $\zeta_k\in a(\mu)$,
\[
\lim_{z\to\zeta_k}|1-u(z)|^2V_\mu(z)=|u'(\zeta_k)|^2\mu_k.
\]
In particular, the function $|1-u|^2V_\mu$ admits a continuous extension at every atom, and hence to $\mathbb{T}\setminus\rho(u)$.
\end{lemma}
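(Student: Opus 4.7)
The plan is to split the potential into the contribution from the single atom at $\zeta_k$ and a remainder, then exploit the Taylor expansion of $u$ at $\zeta_k$ for the first piece and a crude bound plus the vanishing of $|1-u(z)|^2$ for the second. The key preliminary observations are: (a) since $\zeta_k \in a(\mu)=a(\sigma)$ and $\tau(\sigma)=\rho(u)$, we have $\zeta_k \in \mathbb{T}\setminus\rho(u)$; because $u$ is one-component, $u$ extends analytically across every point of $\mathbb{T}\setminus\rho(u)$, so in particular $u$ is analytic in a full disk around $\zeta_k$, with $u(\zeta_k)=1$ (by the definition of the support of $\sigma=\sigma^0$) and $u'(\zeta_k)\neq 0$; (b) by Bessonov's theorem (condition (iii) of Theorem \ref{T:Bessonov}), $\zeta_k$ is an isolated point of $\operatorname{supp}(\mu)=\operatorname{supp}(\sigma)$, so there exists $r>0$ such that $|\zeta_k-\zeta_n|\geq r$ for every $n\neq k$.

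With these in hand, I would write
\[
|1-u(z)|^2 V_\mu(z) = \frac{|1-u(z)|^2}{|z-\zeta_k|^2}\,\mu_k \;+\; |1-u(z)|^2 \sum_{n\neq k}\frac{\mu_n}{|z-\zeta_n|^2}.
\]
For the first term, analyticity of $u$ at $\zeta_k$ together with $u(\zeta_k)=1$ gives the Taylor expansion $u(z)-1=u'(\zeta_k)(z-\zeta_k)+O((z-\zeta_k)^2)$, so $|1-u(z)|^2/|z-\zeta_k|^2 \to |u'(\zeta_k)|^2$ as $z\to\zeta_k$, yielding the prefactor $|u'(\zeta_k)|^2\mu_k$. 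For the tail, I would restrict to $|z-\zeta_k|<r/2$, where $|z-\zeta_n|\geq |\zeta_k-\zeta_n|-|z-\zeta_k|\geq r/2$ for all $n\neq k$; hence
\[
\sum_{n\neq k}\frac{\mu_n}{|z-\zeta_n|^2}\;\leq\;\frac{4}{r^2}\,\mu(\mathbb{T}\setminus\{\zeta_k\})\;<\;\infty,
\]
and since $|1-u(z)|^2\to 0$, this second term vanishes in the limit.

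The two estimates combine to give the claimed value of the limit. For the last assertion, I would argue that $|1-u|^2 V_\mu$ is already continuous on $(\mathbb{T}\setminus\rho(u))\setminus a(\mu)$ (there $u$ is analytic and $V_\mu$ is continuous as a locally uniformly convergent sum since $\operatorname{supp}(\mu)$ avoids a neighborhood), and the argument above shows that at every atom $\zeta_k$ the limit exists and equals the finite value $|u'(\zeta_k)|^2 \mu_k$, so the obvious definition at atoms produces a continuous extension to $\mathbb{T}\setminus\rho(u)$. There is no real obstacle here; the only subtle point is ensuring that $\zeta_k$ really is isolated in the full support of $\mu$, not just among the atoms, which is exactly what conditions (ii)–(iii) of Bessonov's theorem guarantee.
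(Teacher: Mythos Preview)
Your proposal is correct and follows essentially the same approach as the paper: split off the atom at $\zeta_k$, use the differentiability of $u$ at $\zeta_k$ for that term, and use the isolation of $\zeta_k$ in $\operatorname{supp}(\mu)$ to bound the tail. The only cosmetic difference is that the paper packages the tail estimate via the Dominated Convergence Theorem (bounding each summand by $(16/\delta^2)\mu_n$), whereas you bound the whole sum uniformly by $(4/r^2)\mu(\mathbb{T}\setminus\{\zeta_k\})$ and multiply by $|1-u(z)|^2\to 0$; these are equivalent.
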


\begin{proof}
We write
\[
|1-u(z)|^2V_\mu(z) =   \mu_k\frac{|1-u(z)|^2}{|z-\zeta_k|^2} + \sum_{n\neq k}\mu_n\frac{|1-u(z)|^2}{|z-\zeta_n|^2}.
\]
Since $\zeta_k$ is an isolated atom, $\delta=\operatorname{dist}(\zeta_k,\operatorname{supp}(\sigma)\setminus\{\zeta_k\})>0.$ If $|z-\zeta_k|<\delta/2$, for every $n\neq k$ we have that
\[
\mu_n\frac{|1-u(z)|^2}{|z-\zeta_n|^2} \leq \mu_n\frac{|1-u(z)|^2}{(|z-\zeta_k|-|\zeta_k-\zeta_n|)^2}\leq \frac{16}{\delta^2} \mu_n,
\]
which is a summable sequence. Thus, by the Lebesgue Dominated Convergence Theorem, it follows that
\[
\lim_{z\to\zeta_k}|1-u(z)|^2V_\mu(z)=\mu_k |u'(\zeta_k)|^2 +\sum_{n\neq k}\lim_{z\to\zeta} \mu_n\frac{|1-u(z)|^2}{|z-\zeta_n|^2}=\mu_k |u'(\zeta_k)|^2.
\]
\end{proof}

To finish, note that by Lemma \ref{L:Costara}, since $(b,a)$ is a Corona pair and $\mathcal{H}(b)=\mathcal{D}_\mu$, we have that $|a|^2V_\mu \asymp 1$ and so
\[
\mu_n\asymp \frac{1}{|u'(\zeta_n)|^2}
\]
uniformly in $n$, which establishes \eqref{E:main1atoms}.

\section{Proof of Theorem \ref{T:main1}: Necessary and sufficient conditions for $\mathcal{H}(b)\hookrightarrow \mathcal{D_\mu}$} \label{S:proofT1embHb}
In this section, we focus on the inclusion $\mathcal{H}(b)\hookrightarrow \mathcal{D_\mu}$, exploring the conditions under which this embedding holds. We will suppose that $\mu$ is given by
\begin{equation} \label{E:defmu0}
\mu=\sum_{n}\mu_n\delta_{\zeta_n}.
\end{equation}
The following lemma will be used several times, when comparing $|a|^2=|1-u|^2/4$ and $V_\mu$.

\begin{lemma} \label{L:Lemmaequiv}
Let $u$ be a one-component inner function, let $\{\zeta_n\}_n$ its Clark points such that $u(\zeta_n)=1$, and let $\mu$ be defined as in \eqref{E:defmu0}. Then we have that
\[
\sup_{\xi \in \mathbb{T}\setminus \rho(u)} \limsup_{z\to\xi}|1-u(z)|^2V_\mu(z) <\infty \iff \sup_m \sum_{n\neq m} \frac{\mu_n}{|\zeta_n-\zeta_m|^2}<\infty.
\]
\end{lemma}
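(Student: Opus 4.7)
The plan is to analyze, near each Clark atom $\zeta_m$, the decomposition
$|1-u(z)|^2V_\mu(z)=|1-u(z)|^2\mu_m/|z-\zeta_m|^2+|1-u(z)|^2\sum_{n\neq m}\mu_n/|z-\zeta_n|^2$,
combining Bessonov's neighbor condition $|\zeta_m-\zeta_m^\pm|\asymp\sigma_m=1/|u'(\zeta_m)|$ from Theorem~\ref{T:Bessonov}(iv) and the Feichtinger Lipschitz estimate $|u'(t)|\asymp|u'(\zeta_m)|$ from Lemma~\ref{L:feichtinger} on arcs between consecutive Clark atoms. The two implications proceed via dual choices of test points.

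\textbf{The implication $(\Rightarrow)$.} For each index $m$ I take the test point $\xi_m$ to be the unique point in the open arc $(\zeta_m,\zeta_m^+)$ with $u(\xi_m)=-1$, i.e.\ the atom of the Clark measure $\sigma^{1/2}$ lying between $\zeta_m$ and $\zeta_m^+$. Since $\xi_m\in\mathbb{T}\setminus\rho(u)$ and $\xi_m\notin\operatorname{supp}(\mu)$, both $|1-u|^2$ and $V_\mu$ are continuous at $\xi_m$, so $\limsup_{z\to\xi_m}|1-u(z)|^2V_\mu(z)=4V_\mu(\xi_m)$, and the hypothesis forces $V_\mu(\xi_m)$ to be bounded uniformly in $m$. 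The geometric crux is the comparison $|\xi_m-\zeta_n|\lesssim|\zeta_m-\zeta_n|$ for every $n\neq m$: when $n=m^+$ this is trivial, and for other indices it follows from $|\xi_m-\zeta_m|\leq|\zeta_m-\zeta_m^+|\asymp\sigma_m\lesssim|\zeta_m-\zeta_n|$ via Theorem~\ref{T:Bessonov}(iv). Substituting this comparison into $V_\mu(\xi_m)$ yields $\sum_{n\neq m}\mu_n/|\zeta_n-\zeta_m|^2\lesssim V_\mu(\xi_m)\lesssim 1$.

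\textbf{The implication $(\Leftarrow)$.} I first extract from the hypothesis the pointwise bound $\mu_n\lesssim\sigma_n^2$: applying the supremum with $m=n\pm 1$ isolates the single term $\mu_n/|\zeta_n-\zeta_m|^2\leq M$, and the Bessonov estimate $|\zeta_n-\zeta_n^\pm|\asymp\sigma_n$ closes the argument. With this in hand, the case $\xi=\zeta_m$ follows from Lemma~\ref{L:Lemma5.1(1)}, which gives $\lim_{z\to\zeta_m}|1-u(z)|^2V_\mu(z)=|u'(\zeta_m)|^2\mu_m=\mu_m/\sigma_m^2\lesssim 1$. For $\xi\in\mathbb{T}\setminus(\rho(u)\cup\{\zeta_n\}_n)$, continuity at $\xi$ reduces the limsup to $|1-u(\xi)|^2V_\mu(\xi)$, and I let $\zeta_m$ denote the $\sigma^0$-atom closest to $\xi$. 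The diagonal term is controlled by combining $\mu_m\lesssim\sigma_m^2$ with the Lipschitz bound $|1-u(\xi)|=|u(\zeta_m)-u(\xi)|\lesssim|\xi-\zeta_m|/\sigma_m$, obtained by integrating the Feichtinger estimate along the arc from $\zeta_m$ to $\xi$. For the tail, the minimality of $|\xi-\zeta_m|$ together with the triangle inequality yields $|\xi-\zeta_n|\geq|\zeta_m-\zeta_n|/2$ for $n\neq m$, and combining with $|1-u(\xi)|^2\leq 4$ bounds the tail by a multiple of the constant $M$ from the hypothesis.

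\textbf{Main obstacle.} The delicate point is that the diagonal summand $\mu_m/|\xi-\zeta_m|^2$ diverges as $\xi\to\zeta_m$, and only the one-component Lipschitz behavior of $u$ supplies the compensating decay $|1-u(\xi)|^2\lesssim|\xi-\zeta_m|^2/\sigma_m^2$. The entire equivalence hinges on matching the scales $\mu_m\lesssim\sigma_m^2$ against $|1-u(\xi)|^2/|\xi-\zeta_m|^2\lesssim 1/\sigma_m^2$; Bessonov's neighbor condition and Feichtinger's estimate are precisely the one-component ingredients that make this cancellation uniform in $\xi$ and $m$, and the hard part is ensuring that the choice of the closest atom $\zeta_m$ to $\xi$ really does control every other index $n$ through the single geometric comparison above.
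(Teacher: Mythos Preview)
Your proof is correct and follows essentially the same route as the paper: split $V_\mu$ into a diagonal term and a tail, control the tail via the hypothesis, and handle the diagonal through the one-component structure. The only differences are cosmetic: in $(\Rightarrow)$ you test at a single $\sigma^{1/2}$-atom adjacent to $\zeta_m$ whereas the paper uses the two $\sigma^{1/2}$-atoms surrounding $\zeta_m$; and in $(\Leftarrow)$ you bound the diagonal via the iterated Feichtinger Lipschitz estimate $|1-u(\xi)|\lesssim|\xi-\zeta_m|/\sigma_m$, while the paper invokes the one-component kernel bound $|k^u_{\zeta_m}(\xi)|\leq C_u|u'(\zeta_m)|$ from \eqref{E:Relation norms kernels one-component} (which amounts to the same thing) and splits over both neighboring atoms rather than the single closest one.
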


\begin{proof}
We begin with the implication ‘‘$\Longleftarrow$". First, we show that the assumption implies that $\sup_n \mu_n |u'(\zeta_n)|^2<\infty$. Indeed, by Theorem \ref{T:Bessonov},
\[
\mu_n |u'(\zeta_n)|^2 \lesssim \frac{\mu_n}{|\zeta_n-\zeta_n^+|^2}\leq \sup_m \sum_{n\neq m} \frac{\mu_n}{|\zeta_n-\zeta_m|^2}.
\]
Thus, in light of Lemma \ref{L:Lemma5.1(1)}, the assertion holds when $\xi$ is a Clark atom.

Now, suppose that $\xi$ is not an atom, and let $\zeta_{N_1},\zeta_{N_2}$ be the two atoms such that $\xi$ belongs to the arc with endpoints $\zeta_{N_1},\zeta_{N_2}$, and such that $\zeta_{N_1},\zeta_{N_2}$ are neighbors. Notice that $\xi\notin a(\sigma)\cup\tau(\sigma)=\operatorname{supp}(\sigma)$, and the function $|a|^2V_\mu$ extends continuously to $\xi$. Then
\begin{align*}
|1-u(\xi)|^2 V_\mu(\xi) &= \sum_{n} \mu_n\frac{|1-u(\xi)|^2}{|\zeta_n-\xi|^2} \\
&\lesssim\sum_{n\neq N_1,N_2} \mu_n\frac{|1-u(\xi)|^2}{|\zeta_n-\xi|^2} + \frac{|k_{\zeta_{N_1}}(\xi)|^2}{|u'(\zeta_{N_1})|^2}+\frac{|k_{\zeta_{N_2}}(\xi)|^2}{|u'(\zeta_{N_2})|^2}  \\
&\leq 2C_u+4\sum_{n\neq N_1,N_2} \frac{\mu_n}{|\zeta_n-\xi|^2},
\end{align*}
where the constant $C_u$ introduced in \eqref{E:Relation norms kernels one-component}.  By the definition of $\zeta_{N_1},\zeta_{N_2}$, we have that, for every $n\neq N_1,N_2$,
\[
|\xi-\zeta_n|\geq \min\{|\zeta_{N_1}-\zeta_n|,|\zeta_{N_2}-\zeta_n|\}.
\]
It follows that
\begin{align*}
|1-u(\xi)|^2  V_\mu(\xi) &\lesssim 1+\sum_{n\neq N_1,N_2} \frac{\mu_n}{\min\{|\zeta_{N_1}-\zeta_n|,|\zeta_{N_2}-\zeta_n|\}^2} \\
&\leq 1+2 \sup_m \sum_{n\neq m} \frac{\mu_n}{|\zeta_m-\zeta_n|^2},
\end{align*}
where the last inequality follows from splitting the sum into two parts, separating the atoms $\zeta_n$ such that $|\zeta_{N_1}-\zeta_n|\geq |\zeta_{N_2}-\zeta_n|$ from those which satisfy the reverse inequality. Consequently,
\[
\sup_{\xi \in \mathbb{T}\setminus \rho(u)} \limsup_{z\to\xi}|1-u(z)|^2V_\mu(z) <\infty.
\]

We now establish the impplication ‘‘$\implies$". For every $m$ fixed, we choose the two atoms $\eta_1,\eta_2$ of the Clark measure $\sigma^{1/2}$ of $u$ such that $\zeta_m$ belongs to the open arc with endpoints $\eta_1,\eta_2$ and such that $\eta_1,\eta_2$ are neighbors. Observe that $u(\eta_1)=u(\eta_2)=-1$. Since obviously the atoms of $\sigma$ and $\sigma^{1/2}$ are intertwined, arguing as in the previous case, for every $n\neq m$ we have that
\[
|\zeta_n-\zeta_m|\geq \min\{|\zeta_n-\eta_1|,|\zeta_n-\eta_2|\}.
\]
It follows that
\begin{align*}
\sum_{n\neq m} \frac{\mu_n}{|\zeta_n-\zeta_m|^2} &\leq \sum_{n\neq m}\frac{\mu_n}{\min\{|\zeta_n-\eta_1|,|\zeta_n-\eta_2|\}^2} \\
&\leq \sum_{n\neq m} \frac{\mu_n}{|\zeta_n-\eta_1|^2} + \sum_{n\neq m} \frac{\mu_n}{|\zeta_n-\eta_2|^2} \\
&=\sum_{n\neq m}\mu_n\frac{|1-u(\eta_1)|^2}{4|\zeta_n-\eta_1|^2} + \sum_{n\neq m}\mu_n\frac{|1-u(\eta_2)|^2}{4|\zeta_n-\eta_2|^2} \\
&\leq \frac{1}{2} \sup_{\mathbb{T}\setminus\rho(u)} |1-u|^2 V_\mu,
\end{align*}
since the atoms of the Clark measure $\sigma^{1/2}$ do not belong to $\rho(u)$.
\end{proof}

We will now discuss necessary and sufficient conditions for the embedding $\mathcal{H}(b)\hookrightarrow\mathcal{D}_\mu$, which is a first step to the sufficiency part of Theorem \ref{T:main1}. We will need the difference quotient operator that was studied in \cite{bellavita2024spectralanalysisdifferencequotient}. For a point $\zeta\in\mathbb{T}\setminus\rho(u)$, the mapping $Q_\zeta^u\colon K_u\to K_u$ that assigns to each $f\in K_u$ the function
\[Q_\zeta^uf(z)=\frac{f(z)-f(\zeta)}{z-\zeta}, \qquad z\in\mathbb{D},\]
defines a bounded operator on $K_u$. It plays a crucial role in our analysis since $\mathcal{D}_\zeta(f)=\|Q_\zeta^u f\|_{H^2}^2.$

\begin{theorem} \label{T:mainTemb}
Let $u$ be a one-component inner function, $b=(1+u)/2$, and $\mu = \sum_{n}\mu_n\delta_{\zeta_n}$ having the same support as the Clark measure of $u$. Then      $\mathcal{H}(b)\hookrightarrow \mathcal{D}_\mu$ if and only if
\begin{equation}\label{thmitem1}
    \sup_{z \in \mathbb{D}} |a(z)|^2 V_\mu(z) <\infty.
\end{equation}
\end{theorem}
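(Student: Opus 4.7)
The necessity direction is immediate from Lemma \ref{L:Costara}(i): an embedding $\mathcal{H}(b)\hookrightarrow\mathcal{D}_\mu$ forces $|a(w)|^2V_\mu(w)\lesssim |a(w)|^2+|b(w)|^2\le 2$ uniformly on $\mathbb{D}$, since $|a|,|b|\le 1$ for a Pythagorean pair. The bulk of the proof is therefore the converse. Assume $\sup_{\mathbb{D}}|a|^2V_\mu\le M<\infty$ and fix $f\in\mathcal{H}(b)$. The Fricain--Grivaux decomposition (Lemma \ref{L:FricainGrivaux}) writes $f=g+(1-u)h$ with $g\in K_u$, $h\in H^2$, and $\|g\|_{H^2}^2+\|h\|_{H^2}^2\asymp \|f\|_{\mathcal{H}(b)}^2$. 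Since $\mathcal{D}_\mu(\cdot)^{1/2}$ is a seminorm, the task reduces to proving the two separate estimates $\mathcal{D}_\mu((1-u)h)\lesssim \|h\|_{H^2}^2$ and $\mathcal{D}_\mu(g)\lesssim \|g\|_{H^2}^2$.

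The first estimate is the softer one. Because $u(\zeta_n)=1$, the difference quotient simplifies to $Q_{\zeta_n}((1-u)h)(z)=h(z)(1-u(z))/(z-\zeta_n)$, and the identity $|1-\bar\zeta_nz|=|z-\zeta_n|$ on $\mathbb{T}$ yields $|(1-u(z))/(z-\zeta_n)|=|k^u_{\zeta_n}(z)|$ there. Thus $\mathcal{D}_{\zeta_n}((1-u)h)=\int_{\mathbb{T}}|k^u_{\zeta_n}|^2|h|^2\operatorname{d}\!m$. Summing against $\mu$ and invoking the pointwise identity $\sum_n\mu_n|k^u_{\zeta_n}(\xi)|^2=4|a(\xi)|^2V_\mu(\xi)$ for $\xi\in\mathbb{T}$, one obtains $\mathcal{D}_\mu((1-u)h)=4\int_{\mathbb{T}}|a|^2V_\mu\,|h|^2\operatorname{d}\!m$. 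The disk bound transfers to the boundary since $|a|^2$ is continuous and $V_\mu$ is lower semicontinuous (the atoms forming an $m$-null set), giving $\mathcal{D}_\mu((1-u)h)\le 4M\|h\|_{H^2}^2$.

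The second estimate is the delicate one, and it is precisely where Theorem \ref{T:main2} enters. Write $g=W_\sigma F$ with $\|F\|_{L^2(\sigma)}=\|g\|_{H^2}$. Since $Q_{\zeta_n}^u g\in K_u$, Clark's isometry provides
\[
\mathcal{D}_{\zeta_n}(g)=\sigma_n|g'(\zeta_n)|^2+\sum_{m\ne n}\sigma_m\frac{|F(\zeta_m)-F(\zeta_n)|^2}{|\zeta_m-\zeta_n|^2}.
\]
Summing the off-diagonal part against $\mu_n$ and using $|F(\zeta_m)-F(\zeta_n)|^2\le 2(|F(\zeta_m)|^2+|F(\zeta_n)|^2)$ produces two pieces: one is controlled by Lemma \ref{L:Lemmaequiv}, which converts the hypothesis into $\sup_m\sum_{n\ne m}\mu_n/|\zeta_m-\zeta_n|^2\lesssim M$; the other, after applying Lemma \ref{L:integralestimate} to a short arc around $\zeta_n$ (giving $\sum_{m\ne n}\sigma_m/|\zeta_m-\zeta_n|^2\lesssim|u'(\zeta_n)|$), combines with the bound $\mu_n|u'(\zeta_n)|^2\le 4M$ from Lemma \ref{L:Lemma5.1(1)} to yield $\mu_n|u'(\zeta_n)|\lesssim M\sigma_n$. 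Both pieces sum to $O(M\|g\|_{H^2}^2)$. The diagonal part is handled by differentiating the Clark formula $g(z)=\sum_k\sigma_kF(\zeta_k)k^u_{\zeta_k}(z)$ at $\zeta_n$, which produces the key identity
\[
g'(\zeta_n)=\tfrac{1}{2}\sigma_nF(\zeta_n)\zeta_n u''(\zeta_n)-u'(\zeta_n)\,\mathcal{C}_\sigma F(\zeta_n).
\]
The first term is absorbed using the regularity estimate $|u''(\zeta)|\lesssim|u'(\zeta)|^2$ available for one-component inner functions, while the second contributes $\sum_n\mu_n\sigma_n|u'(\zeta_n)|^2|\mathcal{C}_\sigma F(\zeta_n)|^2\le M\|\mathcal{C}_\sigma F\|_{L^2(\sigma)}^2\le M\|\mathcal{C}_\sigma\|^2\|g\|_{H^2}^2$, where the final inequality is exactly Theorem \ref{T:main2}.

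The main obstacle is the diagonal contribution $\sum_n\mu_n\sigma_n|g'(\zeta_n)|^2$: whereas the off-diagonal terms reduce to combinatorial estimates on the Clark atoms, controlling $g'(\zeta_n)$ requires the precise Clark expansion of $g$, whose differentiation naturally produces the truncated Cauchy transform $\mathcal{C}_\sigma F(\zeta_n)$. It is the boundedness of $\mathcal{C}_\sigma$ on $L^2(\sigma)$ for Clark measures of one-component inner functions, established in Theorem \ref{T:main2}, that is the key new ingredient making the argument close.
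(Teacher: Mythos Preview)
Your proof is correct and follows the same overall architecture as the paper: the Fricain--Grivaux decomposition, the potential bound for the $(1-u)H^2$ piece, and the reduction of the $K_u$ piece to the boundedness of $\mathcal{C}_\sigma$ on $L^2(\sigma)$ (Theorem~\ref{T:main2}). The difference lies in how the $K_u$ estimate is organized. The paper expands $g=\sum_j\gamma_j\widetilde{k_j}$, splits off the $n$-th term, and invokes an external norm identity \cite[Eq.~6.6]{bellavita2024spectralanalysisdifferencequotient} for $\|Q_{\zeta_n}^u(\sum_{j\ne n}\cdots)\|_{H^2}^2$, together with the operator bound $\|Q_{\zeta_n}^u\|\lesssim|u'(\zeta_n)|$ from the same reference. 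You instead evaluate $\mathcal{D}_{\zeta_n}(g)=\|Q_{\zeta_n}^u g\|_{L^2(\sigma)}^2$ directly via Clark isometry, getting the diagonal term $\sigma_n|g'(\zeta_n)|^2$ and an off-diagonal difference-quotient sum; you then compute $g'(\zeta_n)$ explicitly from the Clark expansion, which produces $\mathcal{C}_\sigma F(\zeta_n)$ plus a term in $u''(\zeta_n)$. Your route is arguably more self-contained, trading the cited identity for the regularity bound $|u''(\zeta_n)|\lesssim|u'(\zeta_n)|^2$; note that this last estimate, while indeed valid for one-component inner functions, is essentially equivalent to the operator bound $\|Q_{\zeta_n}^u\|\lesssim|u'(\zeta_n)|$ the paper uses (combine it with the bounded point evaluation $|h(\zeta_n)|\le|u'(\zeta_n)|^{1/2}\|h\|_{H^2}$ applied to $h=Q_{\zeta_n}^u k_{\zeta_n}^u$), so you should either cite a source for it or derive it along these lines. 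The termwise differentiation of the Clark series at $\zeta_n$ is justified because $Q_{\zeta_n}^u$ and point evaluation at $\zeta_n$ are both bounded on $K_u$.
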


\begin{proof}
Lemma \ref{L:Costara}(i) shows that the condition on the potential is necessary. Now assume that $\sup_{z \in \mathbb{D}} |a(z)|^2 V_\mu(z) <\infty$. The function $|a|^2 V_\mu$ admits a continuous extension at every point $\lambda\in\mathbb{T}\setminus\rho(u)$; this follows from Lemma \ref{L:Lemma5.1(1)} and the properties of $a$ and $V_\mu$. Thus,
\[
\sup_{ \lambda \in \mathbb{T}\setminus\rho(u)} |a(\lambda)|^2 V_\mu(\lambda) \leq \sup_\mathbb{D}|a|^2 V_\mu <\infty.
\]
In particular, by Lemma \ref{L:Lemma5.1(1)}, we have that $\mu_n\lesssim |u'(\zeta_n)|^{-2}$, uniformly in $n$.

Since, according to Lemma \ref{L:FricainGrivaux}, $\mathcal{H}(b)=aH^2\oplus_b K_u$, we have to show that the two spaces embed separately into $\mathcal{D}_\mu$. We first show that $aH^2 \hookrightarrow \mathcal{D}_\mu$.
Considering $f=ag \in aH^2$,
\[
\mathcal{D}_\mu(ag) =\sum_{j} \mu_j \mathcal{D}_{\zeta_j}(ag) = \sum_j \mu_j\int_{\mathbb{T}}\left\vert \frac{a(\lambda)g(\lambda)-a(\zeta_j)g(\zeta_j)}{\lambda-\zeta_j}\right\vert^2\operatorname{d}\!m(\lambda).
\]
Since $a(\zeta_j)g(\zeta_j)=0$, by \cite[Lemma 2.3]{FricainGrivaux},  we have that
\begin{align*}
\mathcal{D}_\mu(ag)&=\int_{\mathbb{T}}|g(\lambda)|^2 |a(\lambda)|^2\left( \sum_j  \frac{\mu_j}{|\lambda-\zeta_j|^2}\right)\operatorname{d}\!m(\lambda)\\
&= \int_{\mathbb{T}\setminus\rho(u)}|g(\lambda)|^2 |a(\lambda)|^2V_\mu(\lambda)\operatorname{d}\!m(\lambda) \\
&\leq \|g\|_{H^2}^2\sup_{\lambda \in \mathbb{T}\setminus \rho(u)}|a(\lambda)|^2V_\mu(\lambda),
\end{align*}
where we have used the fact that $|\rho(u)|=0$. Hence, according to Lemma \ref{L:FricainGrivaux},
\begin{align*}
\|f\|_{\mathcal{D}_\mu}^2&=\|ag\|_{H^2}^2+\mathcal{D}_\mu(ag)\\
&\leq \Big( \|a\|_{H^\infty}^2+ \sup_{ \lambda \in \mathbb{T}\setminus\rho(u)} |a(\lambda)|^2 V_\mu(\lambda) \Big) \|g\|_{H^2}^2\asymp\|f\|_{\mathcal{H}(b)}^2.
\end{align*}

We next show that $K_u \hookrightarrow \mathcal{D}_\mu$. We write $\widetilde{k_j}$ to denote the normalized reproducing kernel of $K_u$ associated to $\zeta_j$. Since $\{\widetilde{k_j}\}_j$ forms an orthonormal basis of $K_u$, for every $f\in K_u$, we have that
\[
f(\lambda) = \sum_j \gamma_j \widetilde{k_j}(\lambda)= \sum_j \frac{f(\zeta_j)}{|u'(\zeta_j)|}k_{\zeta_j}^u(\lambda), \quad \text{for}\,\,m\text{-a.e.}\,\,\lambda\in\mathbb{T},
\]
where $\gamma_j = \langle f,\widetilde{k_j}\rangle_{H^2}$. Observe that, for each $n$,
\begin{align*}
\mathcal{D}_{\zeta_n}(f) =  \int_{\mathbb{T}} |Q_{\zeta_n}^uf(\lambda)|^2 \operatorname{d}\!m(\lambda).
\end{align*}
Thus,
\begin{align*}
\mathcal{D}_{\zeta_n}(f)
&= \int_{\mathbb{T}} \Big| Q^u_{\zeta_n}\left(\sum_{j\neq n} \frac{f(\zeta_j)}{|u'(\zeta_j)|} k_{\zeta_j}^u\right)(\lambda)+\frac{f(\zeta_n)}{|u'(\zeta_n)|} Q^u_{\zeta_n}k_{\zeta_n}^u(\lambda) \Big|^2 \operatorname{d}\!m(\lambda) \\
&\leq 2\left\|Q^u_{\zeta_n}\left(\sum_{j\neq n} \frac{f(\zeta_j)}{|u'(\zeta_j)|} k_{\zeta_j}^u\right)\right\|_{H^2}^2 + 2\frac{|f(\zeta_n)|^2}{|u'(\zeta_n)|^2} \left\|Q^u_{\zeta_n}k_{\zeta_n}^u  \right\|_{H^2}^2.
\end{align*}
Using the estimate $\|Q_{\zeta_n}^u\|\lesssim |u'(\zeta_n)|$ from \cite[Theorem 2.8]{bellavita2024spectralanalysisdifferencequotient}, where the constant involved only depends on $u$, it follows that
\begin{align}
\nonumber         \mathcal{D}_\mu(f) &=  \sum_n \mu_n \mathcal{D}_{\zeta_n}(f)\\ 
\label{E:thm32}
 &\lesssim\sum_n \mu_n \left\|Q^u_{\zeta_n}\left(\sum_{j\neq n} \frac{f(\zeta_j)}{|u'(\zeta_j)|} k_{\zeta_j}^u\right)\right\|_{H^2}^2 + \sum_n \mu_n |u'(\zeta_n)||f(\zeta_n)|^2.
\end{align}
By the Parseval identity,
\[
\sum_n \mu_n |u'(\zeta_n)||f(\zeta_n)|^2 \lesssim \sum_n\frac{|f(\zeta_n)|^2}{|u'(\zeta_n)|} = \sum_n |\gamma_n|^2 = \|f\|_{H^2}^2,
\]
so that we only have to deal with the first summand in \eqref{E:thm32}. Since for each $j$ we have the relation
\[
\frac{f(\zeta_j)}{|u'(\zeta_j)|} k_{\zeta_j}^u = \gamma_j \widetilde{k_j},
\]
by \cite[Equation 6.6]{bellavita2024spectralanalysisdifferencequotient}, the delicate norm identity
\begin{eqnarray}
&& \left\|Q^u_{\zeta_n}\!\!\left(\sum_{j\neq n} \frac{f(\zeta_j)}{|u'(\zeta_j)|} k_{\zeta_j}^u\right)\right\|_{H^2}^2 \!\!\!\! \notag\\
&=& \sum_{j\neq n} \frac{|\gamma_j|^2}{|\zeta_j-\zeta_n|^2}+\frac{1}{\|k_{\zeta_n}^u\|_{H^2}^2} \left|  \sum_{j\neq n} \gamma_j\frac{(k_{\zeta_j}^u)'(\zeta_n)}{\|k_{\zeta_j}^u\|_{H^2}}   \right|^2\!\! \label{E:mainTembeq1}
\end{eqnarray}
holds. Hence, by \eqref{thmitem1} and by Lemma \ref{L:Lemmaequiv},
\[
\sum_n \mu_n\sum_{j\neq n} \frac{|\gamma_j|^2}{|\zeta_j-\zeta_n|^2} =\sum_j |\gamma_j|^2\sum_{n\neq j} \frac{\mu_n}{|\zeta_j-\zeta_n|^2} \lesssim \|f\|_{H^2}^2.
\]

We deal with the last term in \eqref{E:mainTembeq1}. Since $u(\zeta_m)=1$ for every atom $\zeta_m$, we have that
\begin{equation*}\label{derzero}
(k_{\zeta_j}^u)'(\zeta_n)=\frac{-\overline{u(\zeta_j)}u'(\zeta_n)(1-\overline{\zeta_j}\zeta_n)+\overline{\zeta_j}(1-\overline{u(\zeta_j)}u(\zeta_n))}{(1-\overline{\zeta_j}\zeta_n)^2}=-\frac{u'(\zeta_n)}{1-\overline{\zeta_j}\zeta_n}.
\end{equation*}
It follows that
\begin{align*}
\mathcal{D}_\mu(f)&\lesssim \|f\|_{H^2}^2 + \sum_n \frac{\mu_n}{|u'(\zeta_n)|}  \left|  \sum_{j\neq n} \gamma_j\frac{u'(\zeta_n)}{\|k_{\zeta_j}^u\|_{H^2}(1-\overline{\zeta_j}\zeta_n)}   \right|^2 \\
&\lesssim\|f\|_{H^2}^2 + \sum_n \frac{1}{|u'(\zeta_n)|} \left|  \sum_{j\neq n} \frac{f(\zeta_j)}{1-\overline{\zeta_j}\zeta_n} \frac{1}{|u'(\zeta_j)|}  \right|^2 \\
&=\|f\|_{H^2}^2 + \|\mathcal{C}_\sigma f\|_{L^2(\sigma)}^2\lesssim \|f\|_{H^2}^2,
\end{align*}
where in the last line we have used Theorem \ref{T:main2} which ensures that $\mathcal{C}_\sigma$  is bounded since $u$ is one-component.
\end{proof}

\section{Proof of Theorem \ref{T:main1}: Necessary and sufficient conditions for $\mathcal{D}_\mu \hookrightarrow \mathcal{H}(b)$} \label{S:proofT1embDm}
In order to take care of the embedding $\mathcal{D}_\mu\hookrightarrow \mathcal{H}(b)$, we use the following result by Malman and Seco, that applies to Banach spaces that admit a Cauchy dual.

\begin{theorem}[Malman--Seco, Theorem C in \cite{malmanseco}] \label{T:Malmanseco}
Let $X$ be a Banach space of analytic functions on $\mathbb{D}$ in which the analytic polynomials are dense, let $(b,a)$ be a Pythagorean pair and $\phi = b/a$. The following two statements are equivalent:
\begin{enumerate}[(i)]
\item The multiplication operator $M_\phi \colon H^2 \to X^*, f\mapsto \phi f$ is bounded, where $X^*$ is the Cauchy dual of $X$.
\item We have the embedding $X\hookrightarrow \mathcal{H}(b)$.
\end{enumerate}
\end{theorem}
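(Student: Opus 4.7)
The plan is to prove the equivalence by deriving a variational formula for $\|f\|_{\mathcal{H}(b)}$ and then matching it against the definition of the Cauchy dual. First, I would invoke Sarason's description of non-extreme $\mathcal{H}(b)$ spaces: $f\in\mathcal{H}(b)$ iff the Toeplitz equation $T_{\bar b}f=T_{\bar a}g$ admits a solution $g\in H^{2}$, in which case $\|f\|_{\mathcal{H}(b)}^{2}=\|f\|_{H^{2}}^{2}+\|g\|_{H^{2}}^{2}$ for $g$ of minimal norm. Testing against a polynomial $p$ and using $T_{\bar a}^{*}=T_{a}$ gives $\langle g,ap\rangle_{H^{2}}=\langle f,bp\rangle_{H^{2}}$. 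Since $a$ is outer, $\{ap:p\text{ polynomial}\}$ is dense in $H^{2}$, so substituting $h=ap$ and $bp=\phi h$ yields
\[
\|g\|_{H^{2}}=\sup_{h\in H^{2}\setminus\{0\}}\frac{|\langle f,\phi h\rangle|}{\|h\|_{H^{2}}}.
\]
Combining with the trivial identity $\|f\|_{H^{2}}=\sup_{h'}|\langle f,h'\rangle|/\|h'\|_{H^{2}}$ via the Cauchy--Schwarz identification in $H^{2}\oplus H^{2}$ produces the unified variational formula
\[
\|f\|_{\mathcal{H}(b)}^{2}=\sup_{(h',h)\neq 0}\frac{|\langle f,h'+\phi h\rangle|^{2}}{\|h'\|_{H^{2}}^{2}+\|h\|_{H^{2}}^{2}}.
\]

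Next, I would match this against the Cauchy dual of $X$. By definition, $\|G\|_{X^{*}}$ is the supremum of $|\langle p,G\rangle|$ taken over polynomials $p$ with $\|p\|_{X}\leq 1$. Because polynomials are dense in $X$, the embedding $X\hookrightarrow\mathcal{H}(b)$ is equivalent to $\|f\|_{\mathcal{H}(b)}\lesssim\|f\|_{X}$ for every polynomial $f$. By the formula just derived, this is in turn equivalent to $\|h'+\phi h\|_{X^{*}}\lesssim(\|h'\|_{H^{2}}^{2}+\|h\|_{H^{2}}^{2})^{1/2}$ uniformly in $h',h\in H^{2}$, which, by testing $h=0$ and $h'=0$ separately, decouples into $\|h'\|_{X^{*}}\lesssim\|h'\|_{H^{2}}$ and $\|\phi h\|_{X^{*}}\lesssim\|h\|_{H^{2}}$. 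The first of these is automatic from the standing inclusion $X\hookrightarrow H^{2}$ (valid in the intended applications, e.g., $X=\mathcal{D}_\mu$) via Cauchy--Schwarz on the Fourier side, so the whole condition reduces to the second, which is exactly the boundedness of $M_{\phi}:H^{2}\to X^{*}$.

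The main obstacle I anticipate is technical rather than conceptual: one must interpret the Cauchy pairings $\langle f,\phi h\rangle$ with care, since $\phi h=bh/a$ is only holomorphic on $\mathbb{D}$ and need not lie in $H^{2}$. This is handled by viewing the pairing as a formal Taylor-coefficient pairing against the polynomial $f$, or equivalently as a limit of radial $L^{2}$-pairings on circles of radius $r<1$, using the Smirnov-class character of $\phi$. A further delicate point is the density of $\{ap:p\text{ polynomial}\}$ in $H^{2}$, which rests on the cyclicity of outer functions for the shift. Once these points are secured, the rest of the argument is a clean unfolding of the dualities outlined above.
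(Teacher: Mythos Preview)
The paper does not prove this statement: Theorem~\ref{T:Malmanseco} is quoted verbatim from Malman--Seco (Theorem~C in \cite{malmanseco}) and used as a black box in the proof of Theorem~\ref{T:EmbDminHb}. There is therefore no in-paper argument to compare your proposal against.

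As for the proposal itself, the skeleton via Sarason's $T_{\bar b}f=T_{\bar a}g$ description and the duality identity $\langle f,\phi h\rangle=\langle g,h\rangle$ (for $h=ap$, $p$ polynomial) is the standard route and is essentially how Malman and Seco argue. One point to be careful about: your decoupling step reduces the embedding to \emph{two} conditions, $\|\phi h\|_{X^*}\lesssim\|h\|_{H^2}$ and $\|h'\|_{X^*}\lesssim\|h'\|_{H^2}$, and you dispose of the second by invoking $X\hookrightarrow H^2$. That hypothesis is not part of the theorem as stated here; in the Malman--Seco framework it is built into the standing assumption that $X$ ``admits a Cauchy dual'' (the paper refers to \cite[Section~2.3]{malmanseco} for the precise setup), which in particular forces $H^2\hookrightarrow X^*$. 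So your argument is correct once that convention is made explicit, but as a self-contained proof of the statement exactly as written above it has a missing hypothesis that you should flag rather than just call ``automatic.''
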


For the precise definition of Cauchy dual, we refer to \cite[Section 2.3]{malmanseco}. An explicit characterization of the Cauchy dual of $\mathcal{D}_\mu$ was provided by Luo \cite{luocauchy}\footnote{ We point out that our notation for $V_\mu$ differs from the one in \cite{luocauchy}.}. Given a measure $\mu$ on $\mathbb{T}$, we consider the space
\[
L^2_{a,\mu}=\{f\in \text{Hol}(\mathbb{D})\colon \int_{\mathbb{D}}|f'(z)|^2 \frac{1-|z|^2}{V_\mu(z)}\,\operatorname{d}\!A(z) <\infty\},
\]
with norm
\[
\|f\|_{L^2_{a,\mu}}^2=|f(0)|^2+\int_{\mathbb{D}}|f'(z)|^2 \frac{1-|z|^2}{V_\mu(z)}\,\operatorname{d}\!A(z).
\]
Then let $E(\mu)$ be the closure of the analytic polynomials in $L^2_{a,\mu}$. Lemma 2.2 in \cite{luocauchy} states that $E(\mu)$ is the Cauchy dual of the space~$\mathcal{D}_\mu$.

We need the following well-known result. It is indeed immediate since outer functions are cyclic in the Hardy space $H^2$, and the latter is dense in the weighted Bergman space
\[
A_1^2 =\{f\in\operatorname{Hol}(\mathbb{D}) \colon \int_{\mathbb{D}} |f(z)|^2 (1-|z|^2) \, \operatorname{d}\!A(z) <\infty\}.
\]
For a reference, see Exercise 1 of Section 7.6 in \cite{hedenmalm2012theory}.

\begin{lemma}\label{L:cyclicity}
Let $a$ be an outer function in $H^\infty$. Then $a$ is cyclic in~$A^2_1$.
\end{lemma}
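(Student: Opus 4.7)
The plan is to combine two classical ingredients: the cyclicity of outer functions in the Hardy space $H^2$ and the fact that $H^2$ sits densely inside the weighted Bergman space $A^2_1$. Recall that $a\in\mathcal{H}(b)$ is cyclic in $A^2_1$ precisely when the closed linear span in $A^2_1$ of the polynomial multiples $\{ap : p\in\mathbb{C}[z]\}$ coincides with $A^2_1$. Since the polynomials are dense in $A^2_1$ by definition (or by a standard Cesàro-mean/dilation argument), it suffices to approximate an arbitrary polynomial in the $A^2_1$ norm by polynomial multiples of $a$.

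First I would verify the continuous inclusion $H^2\hookrightarrow A^2_1$. For $f\in H^2$, integration in polar coordinates gives
\begin{equation*}
\int_{\mathbb{D}}|f(z)|^2(1-|z|^2)\,\operatorname{d}\!A(z)\;\leq\;\int_{\mathbb{D}}|f(z)|^2\,\operatorname{d}\!A(z)\;\leq\;\pi\|f\|_{H^2}^2,
\end{equation*}
so $\|f\|_{A^2_1}\lesssim \|f\|_{H^2}$. In particular, $H^2$-convergence implies $A^2_1$-convergence.

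Next, I would invoke the classical fact that any outer function $a\in H^2$ (in particular $a\in H^\infty$) is cyclic in $H^2$: the closure of $a\cdot\mathbb{C}[z]$ in the $H^2$ norm equals $H^2$ (this is Beurling's theorem, since outer functions have trivial inner factor). Fix a polynomial $q$; since $q\in H^2$, choose polynomials $p_n$ with $ap_n\to q$ in $H^2$. By the continuous embedding above, $ap_n\to q$ in $A^2_1$ as well. Therefore every polynomial lies in the $A^2_1$-closure of $a\cdot\mathbb{C}[z]$, and by density of polynomials in $A^2_1$ we conclude that $a\cdot\mathbb{C}[z]$ is dense in $A^2_1$, i.e.\ $a$ is cyclic in $A^2_1$.

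There is no serious obstacle here: the only subtlety is that one must invoke density of polynomials in $A^2_1$, which is standard for the weighted Bergman spaces $A^2_\alpha$ with $\alpha>-1$ and can be proved by the usual dilation trick $f_r(z)=f(rz)\to f$ in $A^2_1$ as $r\to 1^-$, combined with the Taylor expansion of the entire function $f_r$.
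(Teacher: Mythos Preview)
Your argument is correct and follows exactly the route sketched in the paper: outer functions are cyclic in $H^2$ by Beurling's theorem, $H^2$ embeds continuously and densely in $A^2_1$, hence polynomial multiples of $a$ are dense in $A^2_1$. The only slip is the phrase ``$a\in\mathcal{H}(b)$ is cyclic in $A^2_1$'', which should simply read ``$a$ is cyclic in $A^2_1$''; otherwise the proof is complete.
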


Given an analytic function $g$, the Volterra-type integral operator $T_g$ is defined as the path integral
\begin{equation} \label{D:Tg}
T_g f(z) = \int_0^z g'(\zeta) f(\zeta) \operatorname{d}\!\zeta, \qquad z\in\mathbb{D},
\end{equation}
where $f\in \operatorname{Hol}(\mathbb{D})$. Hence, $T_g f$ is a primitive of $g'f$, that is, $(T_g f)'=g'f$. It is a well-known result that $T_g$ defines a bounded operator on $H^2$ if and only if $g\in \operatorname{BMOA}$; see  \cite{Aleman1995} and \cite[Section 6]{garnett}.

\begin{lemma} \label{L:phiinEm}
Let $b=(1+u)/2$ with $u$ inner, $a$ its Pythagorean mate, $\phi = b/a$, and let $\mu$ be any finite positive measure on $\mathbb{T}$. If
\[
\inf_{\mathbb{D}}|a|^2 V_\mu>0,
\]
then for every polynomial $P$ the function $P\phi$ belongs to the space $E(\mu)$.
\end{lemma}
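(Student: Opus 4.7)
A direct computation with $b=(1+u)/2$ and $a=\gamma(1-u)/2$ yields the clean identity
\[
\frac{1}{a(z)} - \phi(z) \;=\; \frac{2-(1+u(z))}{\gamma(1-u(z))} \;=\; \frac{1}{\gamma},
\]
so $\phi = 1/a - 1/\gamma$, and hence $P\phi = P/a - P/\gamma$. Since $P/\gamma$ is already a polynomial (hence trivially in $E(\mu)$), the statement reduces to proving $P/a \in E(\mu)$.

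\paragraph{Cyclic approximation.}
By Lemma \ref{L:cyclicity}, the outer function $a \in H^\infty$ is cyclic in $A_1^2$, so I pick polynomials $(p_n)_n$ with $\psi_n := a p_n - 1 \to 0$ in $A_1^2$. The natural candidate approximants are the polynomials $q_n := P p_n \in \mathcal{P} \subset E(\mu)$, and the goal becomes to show $q_n \to P/a$ in $L^2_{a,\mu}$; closedness of $E(\mu)$ then yields $P/a \in E(\mu)$. The difference $q_n - P/a = P\psi_n/a$ is analytic on $\mathbb{D}$ since $a$ is outer, and the boundary term $|q_n(0)-P(0)/a(0)|^2 = |P(0)\psi_n(0)/a(0)|^2 \to 0$ by continuity of point-evaluation on $A_1^2$.

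\paragraph{Norm estimate.}
For the integral part of the $L^2_{a,\mu}$-norm, I use the hypothesis $|a|^2 V_\mu \ge c_0 > 0$ to convert the weight $1/V_\mu$ into $|a|^2/c_0$:
\[
\int_{\mathbb{D}} |(P\psi_n/a)'|^2 \frac{1-|z|^2}{V_\mu}\,\operatorname{d}\!A \;\le\; \frac{1}{c_0}\int_{\mathbb{D}} \bigl|\,a(P\psi_n/a)'\,\bigr|^2 (1-|z|^2)\,\operatorname{d}\!A.
\]
The identity $a(P\psi_n/a)' = (P\psi_n)' - (a'/a) P\psi_n$ splits this into two Bergman-type integrals: one of $|(P\psi_n)'|^2(1-|z|^2)$, comparable to $\|P\psi_n\|_{H^2}^2$ via Littlewood--Paley; and one involving $|a'/a|^2 |P\psi_n|^2 (1-|z|^2)$, a Carleson-type integral against the derivative measure of $\log a$.

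\paragraph{Main obstacle.}
The first integral tends to $0$ provided $\psi_n \to 0$ in $H^2$, which I arrange by strengthening the choice of $p_n$: since $a$ is outer in $H^\infty \subset H^2$, it is cyclic in $H^2$ as well. The second piece is the real technical crux. It involves $|(\log a)'|^2(1-|z|^2)\operatorname{d}\!A$, which is automatically Carleson for $H^2$ when $\log a \in \operatorname{BMOA}$ (for instance when $u$ is a finite Blaschke product) but need not be so for arbitrary outer $a \in H^\infty$. Closing this estimate in the general case requires a refined use of the hypothesis $\inf |a|^2 V_\mu > 0$, which constrains $\mu$ to concentrate on the boundary zero set of $a$ (that is, the set $\{u = 1\}$) and thereby compensates for the growth of $a'/a$ when paired with the $H^2$-convergence of $\psi_n$.
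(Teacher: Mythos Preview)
Your algebraic reduction $\phi = 1/a - 1/\gamma$ is correct and elegant, and your overall strategy---approximate $P/a$ by $Pp_n$ with $ap_n\to 1$, then control the $L^2_{a,\mu}$-norm via the hypothesis $|a|^2V_\mu\ge c_0$---is sound and close to the paper's approach. However, the proof is genuinely incomplete at the second integral
\[
\int_{\mathbb{D}} \Bigl|\frac{a'}{a}\Bigr|^2 |P\psi_n|^2 (1-|z|^2)\,\operatorname{d}\!A,
\]
and your final paragraph does not close it. Once you have replaced $1/V_\mu$ by $|a|^2/c_0$, the measure $\mu$ has entirely disappeared from the estimate; there is no ``refined use'' of $\inf|a|^2V_\mu>0$ left to invoke, and the sentence about $\mu$ concentrating on $\{u=1\}$ and ``compensating for the growth of $a'/a$'' is not an argument.

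The missing ingredient is exactly the one you dismissed as special: $\log a\in\operatorname{BMOA}$ holds for \emph{every} inner $u$, not just finite Blaschke products. Indeed $h(z)=\log\bigl((1-z)/2\bigr)\in\operatorname{BMOA}$, and composition with an inner function preserves BMOA, so $\log a=h\circ u + \text{const}\in\operatorname{BMOA}$. Then $|a'/a|^2(1-|z|^2)\,\operatorname{d}\!A$ is a Carleson measure for $H^2$, and together with $P\psi_n\to 0$ in $H^2$ (which you already arranged) the second integral tends to $0$. This is precisely the mechanism the paper uses: it phrases the same estimate through the Volterra operator $T_{\log a}$ acting boundedly on $H^2$, which is equivalent to $\log a\in\operatorname{BMOA}$. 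With this one observation your argument is complete; without it, it is not.
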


\begin{proof}
We need to show that, given any polynomial $P$ and any $\varepsilon>0$, there exists a polynomial $R$ such that $\|P\phi - R\|_{L^2_{a,\mu}} <\varepsilon$. Set
\[
L_u = \inf_\mathbb{D}|a|^2 V_\mu >0.
\]
Using the formula \cite[Pag. 4]{hedenmalm2012theory}
\begin{equation}\label{E:formulataylorBergman}
\|f\|_{A_1^2}^2 =\sum_n  \frac{|a_n|^2}{(n+2)(n+1)},
\end{equation}
since $Pb\in H^2,$ we have that $(Pb)'\in A^2_1$. Therefore, by Lemma \ref{L:cyclicity}, since $a$ is outer, there exists a polynomial $Q$ such that 
\[
\|(Pb)'-aQ\|_{A^2_1}^2<\frac{L_u\varepsilon}{4}.
\]
We choose the polynomial $R_1$ such that $R_1'=Q$ and $R_1(0)=0$. Subsequently, we notice that
\[
P\phi a'=\frac{a'}{a}Pb=\big(T_{\log(a)} (Pb)\big)',
\]
where $T_{\log(a)}$ is the operator defined in \eqref{D:Tg}. Here, the symbol $\log(a)$ is well defined, since the function
\[
h(z)=\log\left(\frac{1-z}{2}\right),\qquad z\in\mathbb{D},
\]
belongs to $\operatorname{BMOA},$  and then so does the composition $\log(a)=h\circ u$; see \cite[Section 2]{bourdon} and \cite{MR3431581}. It follows that $T_{\log(a)} (Pb)\in H^2$. Thus, again by the formula \eqref{E:formulataylorBergman}, $P\phi a'=\big(T_{\log(a)} (Pb)\big)'\in A^2_1$. We choose another polynomial $R_2$ such that $R_2(0)=0$ and 
\[
\|P\phi a'-aR_2'\|_{A^2_1}^2<\frac{L_u\varepsilon}{4}.
\]
Now, $R=P(0)\phi(0)+R_1-R_2$ is the desired polynomial. We have that
\begin{align*}
\|P\phi-R\|_{L^2_{a,\mu}}^2 &= \int_{\mathbb{D}} |(P\phi)'(z)-R'(z)|^2 \frac{1-|z|^2}{V_\mu(z)}\,\operatorname{d}\!A(z) \\
&= \int_{\mathbb{D}} \left\vert (Pb)'(z)-\frac{P(z)b(z)a'(z)}{a(z)}-a(z)\big(Q(z)-R_2'(z)\big)\right\vert^2 \\
&\phantom{aaa} \frac{1-|z|^2}{|a(z)|^2V_\mu(z)}\,\operatorname{d}\!A(z).
\end{align*}
It follows that
\begin{align*}
\|P\phi-R\|_{L^2_{a,\mu}}^2  &\leq \frac{2}{L_u}\|(Pb)'-aQ\|_{A^2_1}^2 +\frac{2}{L_u}\|P\phi a'-aR_2'\|_{A^2_1}^2<\varepsilon.
\end{align*}
\end{proof}

Note that in the above proof we do not use the specific form $b=(1+u)/2$. We only needed that $\log(a)\in \operatorname{BMOA}$. We are now ready to prove the result on the embedding $\mathcal{D}_\mu \hookrightarrow \mathcal{H}(b)$.
We remark that the function $u$ does not need to be one-component.

\begin{theorem} \label{T:EmbDminHb}
Let $u$ be an inner function, $b=(1+u)/2$, and let $\mu$ be a measure having the same support as the Clark measure of $u$. Then the embedding $\mathcal{D}_\mu \hookrightarrow \mathcal{H}(b)$ holds if and only if
\begin{equation}\label{E:inf-pos-ymp}
\inf_{z \in \mathbb{D}}|a(z)|^2 V_\mu(z)>0.
\end{equation}
\end{theorem}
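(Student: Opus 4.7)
The plan is to handle the two directions separately. For necessity, I would invoke the Corona pair property of $(b,a)$, noted in the preliminaries, which gives $\inf_{\mathbb{D}}(|a|^2+|b|^2) \geq \delta > 0$, and then apply Lemma \ref{L:Costara}(ii), which yields $|a(w)|^2+|b(w)|^2 \lesssim |a(w)|^2 V_\mu(w)$ for every $w\in\mathbb{D}$. Combining these two facts gives $|a|^2 V_\mu \gtrsim \delta > 0$ uniformly on $\mathbb{D}$, which is exactly \eqref{E:inf-pos-ymp}.

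For sufficiency, I plan to exploit Theorem \ref{T:Malmanseco} (Malman--Seco), which reduces the embedding $\mathcal{D}_\mu \hookrightarrow \mathcal{H}(b)$ to boundedness of the multiplication operator $M_\phi \colon H^2 \to E(\mu)$, where $\phi = b/a$ and $E(\mu)$ is the Cauchy dual of $\mathcal{D}_\mu$. Since Lemma \ref{L:phiinEm} (whose hypothesis $\inf_\mathbb{D}|a|^2 V_\mu > 0$ is exactly our assumption) already guarantees $P\phi \in E(\mu)$ for every polynomial $P$, and analytic polynomials are dense both in $H^2$ and in $E(\mu)$, it will suffice to establish the uniform estimate $\|P\phi\|_{E(\mu)} \lesssim \|P\|_{H^2}$ for every polynomial $P$. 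The extension to all of $H^2$ then follows by density, with pointwise convergence in the Banach space of analytic functions $E(\mu)$ identifying the extended operator with $f\mapsto \phi f$.

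The core estimate proceeds as follows. Setting $L = \inf_\mathbb{D}|a|^2 V_\mu > 0$, I would write
\[
\int_\mathbb{D}|(P\phi)'(z)|^2\frac{1-|z|^2}{V_\mu(z)}\,\operatorname{d}\!A(z) \leq \frac{1}{L}\int_\mathbb{D}|a(z)(P\phi)'(z)|^2(1-|z|^2)\,\operatorname{d}\!A(z) = \frac{1}{L}\,\|a\,(P\phi)'\|_{A^2_1}^2,
\]
and then use the algebraic identity
\[
a\,(P\phi)' \;=\; (Pb)' - P\phi\,a' \;=\; (Pb)' - Pb\,(\log a)' \;=\; (Pb)' - \bigl(T_{\log a}(Pb)\bigr)',
\]
where $T_{\log a}$ is the Volterra-type operator defined in \eqref{D:Tg}. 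Since $h(z) = \log((1-z)/2)$ lies in $\operatorname{BMOA}$ and $u$ is inner, the composition yields $\log a \in \operatorname{BMOA}$ (as already observed in the proof of Lemma \ref{L:phiinEm}), so $T_{\log a}$ is bounded on $H^2$. Combining this with the elementary Taylor-coefficient bound $\|f'\|_{A^2_1} \lesssim \|f\|_{H^2}$ applied to both $Pb$ and $T_{\log a}(Pb)$, together with $\|b\|_{H^\infty}\leq 1$, gives $\|a(P\phi)'\|_{A^2_1} \lesssim \|P\|_{H^2}$; the zero-order piece $|P(0)\phi(0)|$ is trivially controlled by $|\phi(0)|\,\|P\|_{H^2}$.

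The main obstacle is precisely the algebraic identity that rewrites the dangerous factor $Pb\,a'/a$, whose singular behavior stems from $a$ vanishing densely on parts of $\mathbb{T}$, as the derivative of $T_{\log a}(Pb)$. This is the one place where the structure $b=(1+u)/2$ with $u$ inner is essential --- not for this theorem's argument directly, but for ensuring that $\log a$ is a legitimate $\operatorname{BMOA}$ symbol so that the Volterra operator is tractable. Everything else is either direct computation or a verbatim appeal to the earlier Lemma \ref{L:phiinEm} and Theorem \ref{T:Malmanseco}.
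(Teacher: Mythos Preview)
Your proposal is correct and follows essentially the same route as the paper: necessity via Lemma~\ref{L:Costara}(ii) plus the Corona property, and sufficiency via Theorem~\ref{T:Malmanseco}, Lemma~\ref{L:phiinEm}, and the boundedness of the Volterra operator $T_{\log a}$ with $\log a\in\operatorname{BMOA}$. The only cosmetic difference is that the paper splits $(f\phi)'=f'\phi+f\phi'$ and estimates the two pieces separately (using $T_{-\log(1-u)}$ for the second), whereas you package both terms into the single identity $a(P\phi)'=(Pb)'-\bigl(T_{\log a}(Pb)\bigr)'$; the paper also establishes the $L^2_{a,\mu}$ bound for all $f\in H^2$ first and then argues $\phi f\in E(\mu)$, while you bound on polynomials and extend by density---both orderings are valid.
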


\begin{proof}
If the embedding $\mathcal{D}_\mu\hookrightarrow \mathcal{H}(b)$ holds, then, by Lemma \ref{L:Costara}(ii),
\[
|a(w)|^2+|b(w)|^2 \lesssim |a(w)|^2V_\mu(w), \qquad w\in\mathbb{D},
\]
and since $(b,a)$ is a Corona pair, we immediately conclude that \eqref{E:inf-pos-ymp} holds.

To prove that the embedding holds under the potential condition \eqref{E:inf-pos-ymp}, we use Theorem \ref{T:Malmanseco} and the characterization of the Cauchy dual of $\mathcal{D}_\mu$. We show that the multiplication operator $M_\phi$ defines a bounded operator from $H^2$ to $E(\mu)$. First, we show that it is bounded from $H^2$ to $L^2_{a,\mu}$, and then that the image $\operatorname{Ran}(M_\phi)\subseteq E(\mu)$. As before, let
\[
L_u =\inf_{\mathbb{D}}|a|^2 V_\mu >0.
\]
Then, for $f\in H^2$,
\begin{align*}
\|M_\phi f\|_{L^2_{a,\mu}}^2 &= |\phi(0)f(0)|^2 + \int_{\mathbb{D}} |\phi'(z)f(z)+f'(z)\phi(z)|^2  \frac{1-|z|^2}{V_\mu(z)}\,\operatorname{d}\!A(z) .
\end{align*}
Clearly, $|\phi(0)f(0)|^2 \leq |\phi(0)|^2 \|f\|_{H^2}^2.$ By the triangular inequality, we reduce to the following two summands.

On the one hand,
\begin{align*}
\int_{\mathbb{D}} |f'(z)\phi(z)|^2  \frac{1-|z|^2}{V_\mu(z)}\,\operatorname{d}\!A(z) &\leq \frac{\|b\|_{H^\infty}^2}{L_u}\int_{\mathbb{D}} |f'(z)|^2  (1-|z|^2) \,\operatorname{d}\!A(z) \\
&\lesssim \|f\|_{H^2}^2.
\end{align*}

On the other hand, using the fact that
\[
\phi'(z)= \frac{2u'(z)}{(1-u(z))^2}=\frac{u'(z)}{a(z)(1-u(z))}, \qquad z\in\mathbb{D},
\]
we have that
\begin{align*}
\int_{\mathbb{D}} |\phi'(z)f(z)|^2  \frac{1-|z|^2}{V_\mu(z)}\,\operatorname{d}\!A(z) &=  \int_{\mathbb{D}} \frac{|u'(z)f(z)|^2}{|1-u(z)|^2}  \frac{1-|z|^2}{|a(z)|^2V_\mu(z)}\,\operatorname{d}\!A(z) \\
&\leq \frac{1}{L_u} \int_{\mathbb{D}} \frac{|u'(z)f(z)|^2}{|1-u(z)|^2} (1-|z|^2)\,\operatorname{d}\!A(z).
\end{align*}
As we did in the proof of Lemma \ref{L:phiinEm}, we invoke the operator $T_g$ defined in \eqref{D:Tg}. We have
\[
\frac{u'}{1-u}f = (T_g f)',
\]
for $g=-\log(1-u)$. Again, since $u$ is inner, we have that $g\in \operatorname{BMOA}$, and, therefore,
\begin{align*}
\int_{\mathbb{D}} |\phi'(z)f(z)|^2  \frac{1-|z|^2}{V_\mu(z)}\,\operatorname{d}\!A(z) &\leq \frac{1}{L_u} \| (T_g f)'\|_{A^2_1}^2 \lesssim \| T_g f\|_{H^2}^2 \lesssim \|f\|_{H^2}^2.
\end{align*}
We deduce that
\[
\|M_\phi f\|_{L^2_{a,\mu}}^2\lesssim \|f\|_{H^2}^2.
\]

To conclude the proof, we have to show that $M_\phi f\in E(\mu)$ for every $f\in H^2$. What follows involves some technical details, but it is a fairly standard argument that relies on the density of polynomials in $H^2$ and the fact that $\phi\in E(\mu)$.

We set $\|\phi\|_M=\sup \{\|M_\phi(f)\|_{L^2_{a,\mu}}\colon \|f\|_{H^2}\leq 1\}$. In the first part of proof, we showed that $\|\phi\|_M<\infty$.
We fix $f\in H^2$ and $\varepsilon>0$. Let $P_1$ be a polynomial such that $\|f-P_1\|_{H^2}\leq (2\|\phi\|_M)^{-1}\varepsilon$. Using Lemma \ref{L:phiinEm}, we fix another polynomial $P_2$ such that $\|P_1\phi-P_2\|_{L^2_{a,\mu}}<\varepsilon/2$.  We conclude that
\begin{align*}
\|\phi f-P_2\|_{L^2_{a,\mu}} &\leq  \|\phi f-\phi P_1\|_{L^2_{a,\mu}} +  \|\phi P_1-P_2\|_{L^2_{a,\mu}} \\
&\leq \|\phi\|_M \|f-P_1\|_{H^2} + \|P_1\phi-P_2\|_{L^2_{a,\mu}} <\varepsilon,
\end{align*}
showing that $\phi f\in E(\mu)$ and then $M_\phi$ is bounded from $H^2$ to $E(\mu)$.
\end{proof}

Now, we show that for one-component inner functions, the assumption on the potential condition in Theorem \ref{T:EmbDminHb} follows from an easier condition on the Clark points.

\begin{lemma} \label{L:inf1c}
Let $u$ be a one-component inner function, and let $\mu$ be a measure of the form
\[\mu=\sum_{n}\mu_n\delta_{\zeta_n},\]
where $\{\zeta_n\}_n$ are the Clark atoms of $u$. Then
\[
\inf_{\mathbb{D}}|a|^2 V_\mu>0
\]
if and only if
\[
\mu_n \gtrsim \frac{1}{|u'(\zeta_n)|^2}
\]
uniformly in the atoms $\zeta_n$.
\end{lemma}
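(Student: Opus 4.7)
My plan is to prove the two directions separately.

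\textbf{$(\Rightarrow)$:} Set $c_0:=\inf_{\mathbb{D}}|a|^2V_\mu>0$. For each Clark atom $\zeta_n$, Lemma~\ref{L:Lemma5.1(1)} combined with $|1-u|^2=4|a|^2$ yields
\[
|u'(\zeta_n)|^2\mu_n=\lim_{z\to\zeta_n}|1-u(z)|^2V_\mu(z)=4\lim_{z\to\zeta_n}|a(z)|^2V_\mu(z)\geq 4c_0,
\]
so $\mu_n\geq 4c_0/|u'(\zeta_n)|^2$, as required.

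\textbf{$(\Leftarrow)$:} Assume $\mu_n\geq c/|u'(\zeta_n)|^2$. I argue by contradiction: take $z_k\in\mathbb{D}$ with $|a(z_k)|^2V_\mu(z_k)\to 0$, and by compactness extract $z_k\to z_\infty\in\overline{\mathbb{D}}$. Three positions of $z_\infty$ are handled immediately. If $z_\infty\in\mathbb{D}$, continuity of $|a|^2V_\mu$ there, combined with $a(z_\infty)\neq 0$ (maximum principle for non-constant inner $u$) and $V_\mu(z_\infty)>0$ (since $\mu\neq 0$), contradicts the vanishing. If $z_\infty=\zeta_{n_0}$ is a Clark atom, Lemma~\ref{L:Lemma5.1(1)} and the hypothesis give $\lim_k|a(z_k)|^2V_\mu(z_k)=|u'(\zeta_{n_0})|^2\mu_{n_0}/4\geq c/4>0$. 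If $z_\infty\in\mathbb{T}\setminus(\rho(u)\cup\{\zeta_n\}_n)$, analyticity of $u$ near $z_\infty$ with $a(z_\infty)\neq 0$ and continuity of $V_\mu$ (positive) there force a positive continuous limit.

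The substantive case is $z_\infty=\xi\in\rho(u)$, which I treat by a dichotomy on $u(z_k)$. In the first branch, $\liminf_k|1-u(z_k)|>0$: since $\xi\in\rho(u)=\tau(\sigma)$ is disjoint from the isolated atoms $\{\zeta_n\}_n$, any fixed Clark atom $\zeta_{n^*}$ gives $V_\mu(z_k)\geq \mu_{n^*}/|z_k-\zeta_{n^*}|^2\to \mu_{n^*}/|\xi-\zeta_{n^*}|^2>0$, and with $|a(z_k)|^2$ bounded below we reach a contradiction. In the second branch, $|1-u(z_k)|\to 0$; the one-component structure, through Lemma~\ref{L:feichtinger} and the distance estimate~\eqref{dist spectrum}, forces $z_k$ (for $k$ large) into a Stolz angle of bounded aperture at some Clark atom $\zeta_{m(k)}\to\xi$, and the uniform Julia--Carath\'eodory comparability $|z_k-\zeta_{m(k)}|\asymp \sigma_{m(k)}|1-u(z_k)|$ applies, yielding
\[
V_\mu(z_k)\geq \frac{c\,\sigma_{m(k)}^2}{|z_k-\zeta_{m(k)}|^2}\asymp \frac{c}{|1-u(z_k)|^2},
\]
whence $|a(z_k)|^2V_\mu(z_k)\gtrsim c$, again a contradiction.

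\textbf{Main obstacle.} The principal technical hurdle is the second branch of the dichotomy: showing that $u(z_k)\to 1$ automatically places $z_k$ non-tangentially, at bounded aperture, to some Clark atom $\zeta_{m(k)}\to\xi$, so that the Julia--Carath\'eodory comparability holds uniformly. This pseudo-hyperbolic concentration of the level set $\{u\approx 1\}$ around Clark atoms, which follows from the Baranov--Dyakonov preimage-arc description combined with Aleksandrov's reverse inequality~\eqref{E:Relation norms kernels one-component}, is the decisive manifestation of the one-component hypothesis in the argument.
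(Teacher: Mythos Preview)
Your $(\Rightarrow)$ direction is correct and matches the paper's argument via Lemma~\ref{L:Lemma5.1(1)}.

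For $(\Leftarrow)$, your contradiction/compactness scheme is structurally sound, and the three easy cases ($z_\infty\in\mathbb{D}$, $z_\infty$ a Clark atom, $z_\infty\in\mathbb{T}\setminus(\rho(u)\cup\{\zeta_n\}_n)$) are handled correctly. The genuine gap lies in the second branch of your dichotomy when $z_\infty\in\rho(u)$. You assert that $u(z_k)\to 1$ forces $z_k$ into a Stolz region at some $\zeta_{m(k)}$ with the uniform two-sided comparability $|z_k-\zeta_{m(k)}|\asymp\sigma_{m(k)}|1-u(z_k)|$, and you attribute this to Lemma~\ref{L:feichtinger}, the distance estimate~\eqref{dist spectrum}, and Aleksandrov's inequality~\eqref{E:Relation norms kernels one-component}. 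But these are all \emph{boundary} statements about points of $\mathbb{T}$ (or give a one-sided bound at best); none of them gives the two-sided estimate for $z_k\in\mathbb{D}$ that you need, nor do they show that every $z\in\mathbb{D}$ with $|1-u(z)|$ small lies near an atom at the correct scale. Julia--Carath\'eodory gives the comparability only non-tangentially at a \emph{single} atom and with no uniformity, so the phrase ``uniform Julia--Carath\'eodory comparability'' hides precisely the unproved step.

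The paper bypasses this entirely with a direct (non-contradiction) argument using two lemmas of Bessonov: there is $\kappa>0$ such that $u$ extends analytically to each disk $D_n(\kappa)=\{|z-\zeta_n|<\kappa\sigma_n\}$ with $\tfrac12|u'(\zeta_n)|\le |1-u(z)|/|z-\zeta_n|\le 2|u'(\zeta_n)|$ there, and $|1-u(z)|\ge\varepsilon>0$ for all $z\in\mathbb{D}\setminus\bigcup_n D_n(\kappa)$. This decomposition of $\mathbb{D}$ is exactly the ``pseudo-hyperbolic concentration'' you describe in your Main Obstacle paragraph, and it makes the lower bound a two-line computation: on $D_n(\kappa)$ one gets $|a|^2V_\mu\ge C/16$ from the single term $\mu_n/|z-\zeta_n|^2$, and off the disks one gets $|a|^2V_\mu\ge \varepsilon^2\mu(\mathbb{T})/16$ from~\eqref{E:Properties of Vmu}. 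If you want to rescue your argument, the cleanest fix is to invoke these two Bessonov lemmas directly rather than the boundary tools you cite.
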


\begin{proof}
If $\inf_{\mathbb{D}}|a|^2 V_\mu>0$, the uniform estimation $\mu_n \gtrsim |u'(\zeta_n)|^{-2}$ follows from Lemma \ref{L:Lemma5.1(1)}.

The other implication is more convoluted. We apply Lemma 2.2 in \cite{bessonov2015duality} to the Clark measure $\sigma$. Then there exists $\kappa >0$ such that for every atom $\zeta_n$ the function $u$ is analytic on the open disk
\[
D_{n}(\kappa)=\{z\in\mathbb{C}\colon |z-\zeta_n|<\kappa |u'(\zeta_n)|^{-1}\},
\]
and for every $z\in D_{n}(\kappa)$ we have that
\begin{equation} \label{E:bessonov22}
\frac{|u'(\zeta_n)|}{2} \leq \left|\frac{1-u(z)}{\zeta_n-z}\right|\leq 2|u'(\zeta_n)|.
\end{equation}
Also, denoting $D_\sigma(\kappa)=\bigcup_n D_n(\kappa),$ by Lemma 3.2 in \cite{bessonov2015duality}, there exists $\varepsilon>0$ such that $|1-u(z)|\geq \varepsilon$ for every $z\in\mathbb{D}\setminus D_\sigma(\kappa)$.

Let $C>0$ be a constant such that  \(\mu_n \geq C|u'(\zeta_n)|^{-2}\) uniformly holds. If $z\in D_n(\kappa)$ for some $n$, then by \eqref{E:bessonov22}
\[
|a(z)|^2 V_\mu(z) \geq \frac{|1-u(z)|^2}{4} \frac{C}{|u'(\zeta_n)|^2}\frac{1}{|z-\zeta_n|^2}\geq \frac{C}{16}.
\]
But if $z\in\mathbb{D}\setminus D_\sigma(\kappa)$, then, by \eqref{E:Properties of Vmu},
\[
|a(z)|^2 V_\mu(z) \geq \frac{\varepsilon^2}{4} \frac{\mu(\mathbb{T})}{4}.
\]
We conclude that
\[
\inf_{\mathbb{D}} |a|^2V_\mu \geq \min\left\{\frac{C}{16},\frac{\varepsilon^2\mu(\mathbb{T})}{16}\right\}>0.
\]
\end{proof}

We are now in a position to sum up the proof of Theorem \ref{T:main1}. In Section \ref{S:proofT1measure} we have already established the necessity of \eqref{E:main1expressionmu}, \eqref{E:main1atoms} and \eqref{E:main1potential}. Conversely, if $\mu$ satisfies the conditions \eqref{E:main1expressionmu}-\eqref{E:main1potential}, then Theorem \ref{T:mainTemb} gives the embedding $\mathcal{H}(b)\hookrightarrow\mathcal{D}_\mu$. Also, in view of Lemma \ref{L:inf1c} we obtain the potential condition in Theorem \ref{T:EmbDminHb}, so that $\mathcal{D}_\mu$ embeds in $\mathcal{H}(b)$ and thus, finally, $\mathcal{H}(b)=\mathcal{D}_\mu$.

\section{Examples} \label{S:examples}

In this section, which is divided into two subsections, we provide examples to clarify that the conditions of Theorem \ref{T:main1} are not trivial, more precisely, we provide a one-component inner function satisfying conditions \eqref{E:main1expressionmu}-\eqref{E:main1potential}, and a  class of one-component inner functions for which \eqref{E:main1potential} does not hold.

We assume throughout this section that $\mu$ is the explicit discrete measure
\begin{equation}\label{E:muexamples}
\mu=\sum_{n} \frac{1}{|u'(\zeta_n)|^2}\delta_{\zeta_n}.
\end{equation}
With this assumption, Lemma \ref{L:Lemmaequiv} can be reformulated as follows:
\[
\sup_{\xi \in \mathbb{T}\setminus \rho(u)} \limsup_{z\to\xi}|1-u(z)|^2V_\mu(z) <\infty
\]
if and only if
\[
\sup_m \sum_{n\neq m} \frac{1}{|u'(\zeta_n)|^2}\frac{1}{|\zeta_n-\zeta_m|^2}<\infty.
\]

First, we establish a useful result that applies to every one-component inner function.

\begin{lemma} \label{L:supVm}
Let $u$ be a one-component inner function, and let $\mu$ be as in \eqref{E:muexamples}. Then $\sup_{\mathbb{D}} |1-u|^2V_\mu <\infty$ if and only if
\begin{enumerate}[(i)]
\item \label{L:supVm(i)} $\sup_{\rho(u)} V_\mu<\infty$, and
\item \label{L:supVm(ii)} $\sup_{\xi\in\mathbb{T}\setminus\rho(u)} \limsup_{z\to\xi}|1-u(z)|^2V_\mu(z) <\infty$.
\end{enumerate}
\end{lemma}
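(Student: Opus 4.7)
The forward implication is straightforward: given $M:=\sup_\mathbb{D}|1-u|^2 V_\mu<\infty$, condition (ii) holds trivially since the limsup at every $\xi\in\mathbb{T}\setminus\rho(u)$ is dominated by $M$. For (i), I would fix $\xi\in\rho(u)$ and use that $\rho(u)=\{\liminf_{z\to\xi}|u(z)|=0\}$ for the inner function $u$ to pick $z_n\to\xi$ with $u(z_n)\to 0$. Then $|1-u(z_n)|^2\to 1$ forces $V_\mu(z_n)\leq 2M$ eventually, and Fatou's lemma applied termwise to $V_\mu(\xi)=\sum_m\mu_m/|\xi-\zeta_m|^2$ gives $V_\mu(\xi)\leq \liminf_n V_\mu(z_n)\leq 2M$, uniformly in $\xi$.

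For the reverse implication, suppose (i) and (ii) hold with sup's $M_1$ and $M_2$. By Lemma~\ref{L:Lemmaequiv}, (ii) is equivalent to $S:=\sup_n\sum_{m\neq n}\mu_m/|\zeta_n-\zeta_m|^2<\infty$. I would invoke the Bessonov geometric decomposition already used in the proof of Lemma~\ref{L:inf1c}: there exist $\kappa,\varepsilon>0$ such that on each disjoint disk $D_n(\kappa)=\{|z-\zeta_n|<\kappa/|u'(\zeta_n)|\}$ one has $|1-u(z)|\asymp|u'(\zeta_n)||z-\zeta_n|$ via \eqref{E:bessonov22}, while $|1-u(z)|\geq\varepsilon$ on $\mathbb{D}\setminus D_\sigma(\kappa)$. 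On $D_n(\kappa)$, splitting $V_\mu(z)$ into the atomic term at $\zeta_n$ and the tail, and choosing $\kappa$ small enough that $|z-\zeta_m|\geq|\zeta_n-\zeta_m|/2$ for $m\neq n$ (using Theorem~\ref{T:Bessonov}(iv)), I obtain
\[
|1-u(z)|^2V_\mu(z)\lesssim |u'(\zeta_n)|^2\mu_n+\kappa^2\sum_{m\neq n}\frac{\mu_m}{|z-\zeta_m|^2}\leq 1+4\kappa^2 S.
\]

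The remaining and most delicate step is to bound $|1-u|^2V_\mu$ on $\mathbb{D}\setminus D_\sigma(\kappa)$, where $|1-u|\in[\varepsilon,2]$ and it is enough to control $V_\mu$. The key ingredient I would establish is the \emph{Carleson-type mass estimate}: for $\xi_0\in\rho(u)$ and $r>0$,
\[
\mu(B(\xi_0,r))=\!\!\!\sum_{|\zeta_m-\xi_0|\leq r}\!\!\!\sigma_m^2\leq \sigma(B(\xi_0,r))\!\!\sup_{\zeta_m\in B(\xi_0,r)}\!\!\sigma_m\lesssim r^2,
\]
using Lemma~\ref{Lemma misura} for $\sigma(B(\xi_0,r))\lesssim r$ and the estimation $\sigma_m\leq C_u\operatorname{dist}(\zeta_m,\rho(u))\leq C_u r$ from \eqref{dist spectrum}. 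Writing $\xi:=z/|z|$: if $\operatorname{dist}(\xi,\rho(u))\leq 2(1-|z|)$, pick the closest $\xi_0\in\rho(u)$ (so $|z-\xi_0|\leq 3(1-|z|)$) and split atoms relative to $\xi_0$: the far atoms ($|\zeta_m-\xi_0|>6(1-|z|)$) contribute $\leq 4V_\mu(\xi_0)\leq 4M_1$ by (i), while the close atoms contribute $\leq\mu(B(\xi_0,6(1-|z|)))/(1-|z|)^2\lesssim 1$ by the Carleson estimate together with $|z-\zeta_m|\geq 1-|z|$. In the complementary regime $\operatorname{dist}(\xi,\rho(u))>2(1-|z|)$ one has $\xi\in\mathbb{T}\setminus\rho(u)$, and a compactness argument on the compact sets $K_\eta:=\{\xi:\operatorname{dist}(\xi,\rho(u))\geq\eta\}$ combined with (ii) gives $|1-u|^2V_\mu\leq 2M_2$ in a uniform neighborhood of $K_\eta$ in $\mathbb{D}$. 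The main obstacle is precisely the first regime: infinitely many Clark atoms accumulate near $\xi_0\in\rho(u)$ while $|1-u|$ stays bounded below on $\mathbb{D}\setminus D_\sigma(\kappa)$, and it is the interplay of the Carleson mass estimate $\mu(B(\xi_0,r))\lesssim r^2$ with (i) that tames their total contribution.
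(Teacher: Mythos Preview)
Your forward implication is correct and in fact more direct than the paper's: you derive (i) by an elementary Fatou argument, whereas the paper invokes the full force of Theorem~\ref{T:main1} and an external embedding result to get $\sup_{\rho(u)}V_\mu<\infty$.

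For the reverse implication your strategy differs substantially from the paper's. The paper bypasses all geometric decomposition by applying the maximum principle to the subharmonic partial sums
\[
\Phi_N(z)=\sum_{n=1}^N\frac{1}{|u'(\zeta_n)|^2}\left|\frac{1-u(z)}{z-\zeta_n}\right|^2,
\]
using (i) and (ii) only to control $\limsup_{z\to\xi}\Phi_N(z)$ at boundary points, and then letting $N\to\infty$. Your approach---Bessonov disks plus Case A/Case B---is more hands-on and yields additional structural information (notably the Carleson estimate $\mu(B(\xi_0,r))\lesssim r^2$), but is considerably longer. The treatment of the disks $D_n(\kappa)$ and of Case~A is correct.

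There is, however, a genuine gap in Case~B. The compactness argument on $K_\eta$ gives, for each fixed $\eta>0$, a neighborhood of $K_\eta$ on which $|1-u|^2V_\mu\leq 2M_2$; but the size of that neighborhood depends on $\eta$, and you need the bound at points $z$ whose relevant $\eta$ is $2(1-|z|)$. Nothing prevents $z_k\in\mathbb{D}\setminus D_\sigma(\kappa)$ with $\operatorname{dist}(z_k/|z_k|,\rho(u))>2(1-|z_k|)$ from converging to a point of $\rho(u)$, in which case continuity of $|1-u|^2V_\mu$ on $\overline{\mathbb{D}}\setminus\rho(u)$ gives no control. The fix is to abandon compactness and argue directly from the discrete condition $S=\sup_n\sum_{m\neq n}\mu_m/|\zeta_n-\zeta_m|^2<\infty$ (equivalent to (ii) via Lemma~\ref{L:Lemmaequiv}): for $z\in\mathbb{D}\setminus D_\sigma(\kappa)$ with $\xi=z/|z|\in\mathbb{T}\setminus\rho(u)$, let $\zeta_N,\zeta_{N'}$ be the two neighboring atoms bracketing $\xi$. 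Then $|z-\zeta_m|\geq|\xi-\zeta_m|/2\geq\tfrac12\min(|\zeta_m-\zeta_N|,|\zeta_m-\zeta_{N'}|)$ for $m\neq N,N'$, giving $\sum_{m\neq N,N'}\mu_m/|z-\zeta_m|^2\leq 8S$, while $\mu_N/|z-\zeta_N|^2\leq\sigma_N^2/(\kappa\sigma_N)^2=\kappa^{-2}$ from $z\notin D_N(\kappa)$, and similarly for $N'$. This bounds $V_\mu(z)$ uniformly and completes Case~B; with this repair your overall argument goes through, and in fact condition~(i) is only used in Case~A.
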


\begin{proof}
To start, we assume that $\sup_{\mathbb{D}} |1-u|^2V_\mu <\infty$. Then, by Theorem \ref{T:main1}, we know that $\mathcal{H}(b)=\mathcal{D}_\mu$, for $b=(1+u)/2$. In particular, $K_u\hookrightarrow\mathcal{D}_\mu$ and, as shown in the proof of Theorem 1.4 of \cite{bellavita2023embedding}, $\sup_{\rho(u)} V_\mu<\infty.$ The other condition follows from the fact that, by assumption, $|1-u|^2 V_\mu$ is bounded on $\mathbb{D}$ and extends continuously to every point in $\mathbb{T}\setminus\rho(u)$, by Lemma \ref{L:Lemma5.1(1)}.

We prove the converse implication. Let $M$ be a majorant of both suprema in \eqref{L:supVm(i)} and \eqref{L:supVm(ii)}. For every integer $N\geq 1$, we apply the maximum principle to the subharmonic function given by the partial sum
\[
 \Phi_N(z)=
\sum_{n=1}^N \frac{1}{|u'(\zeta_n)|^2}\left|\frac{1-u(z)}{z-\zeta_n}\right|^2.
\]
In particular, it is enough to show that for every $N\geq 1$
\[\sup_{\xi\in\mathbb{T}} \limsup_{z\to\xi} \Phi_N(z) \leq 4M.\]
The function $\Phi_N$ is continuous on $\overline{\mathbb{D}}\setminus\rho(u)$, by Lemma \ref{L:Lemma5.1(1)}. For $\xi\in\mathbb{T}\setminus\rho(u)$, we have
\[
 \Phi_N(\xi)=\limsup_{z\to \xi}\Phi_N(z)\le \limsup_{z\to\xi}|1-u(z)|^2V_\mu(z) \le M.
\]
When $\xi\in \rho(u)$,  
\begin{align*}
 \limsup_{z\to\xi}\Phi_N(z)&\le 4 \limsup_{z\to \xi}
 \sum_{n=1}^N \frac{1}{|u'(\zeta_n)|^2}\left|\frac{1}{z-\zeta_n}\right|^2\\
 & =4\sum_{n=1}^N \frac{1}{|u'(\zeta_n)|^2}\frac{1}{|\xi-\zeta_n|^2}\\
&\leq 4V_{\mu}(\xi)\le 4M.
\end{align*}

By subharmonicity, $\sup_{z\in\mathbb{D}}\Phi_N(z)\le 4M$, and then for every $z\in \mathbb{D}$,
\[
 |1-u(z)|^2V_{\mu}(z)=\lim_{N\to+\infty}\Phi_N(z)\le 4M.
\]
\end{proof}

\subsection{An explicit example of a one-component inner function satisfying \eqref{E:main1potential}}\label{Subsection-example}
We prove that the singular inner function $u$ associated to $\delta_1$,
\[u(z)=\exp{\left(\frac{z+1}{z-1}\right)}, \qquad z\in\mathbb{D},\]
and the measure
\[\mu=\sum_n \frac{1}{|u'(\zeta_n)|^2}\delta_{\zeta_n},\]
where $\zeta_n$ are the Clark atoms of $u$, satisfy the potential condition $\sup_\mathbb{D}|1-u|^2V_\mu <\infty$. This provides an explicit example of the equality $\mathcal{H}(b)=\mathcal{D}_\mu$, where $\mu$ is a measure with infinitely many atoms.

Notice that $u$ is a one-component inner function with $\rho(u)=\{1\}$ and its first derivative satisfies
\[
|u'(\zeta)|=\frac{2}{|\zeta-1|^2},\qquad \zeta\in\mathbb{T}\setminus\{1\}.
\]
The atoms of the Clark measure are anchored at
\[
\{\zeta_n\}_{n\in\mathbb{Z}} = \left\{\frac{2n\pi i+1}{2n\pi i-1}\colon n\in\mathbb{Z}\right\}.
\]
In particular, since
\[
\zeta_n-1=\frac{2}{2n\pi i-1}, \qquad n \in \mathbb{Z},
\]
we can compute
\[
\frac{1}{|u'(\zeta_n)|}=\frac{|\zeta_n-1|^2}{2}=\frac{2}{4n^2\pi^2+1}\asymp \frac{1}{n^2}, \qquad \mbox{as } n\to\infty,
\]
and
\[
\zeta_k-\zeta_j = \frac{4\pi i(j-k)}{(2k\pi i-1)(2j\pi i -1)}, \qquad k\neq j.
\]

 \begin{theorem}\label{T:mainexample}
 Let $u$ be the one-component inner function
 \[u(z)=\exp{\left(\frac{z+1}{z-1}\right)}, \qquad z\in\mathbb{D}.\]
 If $b=(1+u)/2$ and $\mu$ is as in \eqref{E:main1expressionmu}, then we have the equality
 \[
 \mathcal{H}(b)=\mathcal{D}_\mu.
 \]
 \end{theorem}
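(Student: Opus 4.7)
The plan is to apply Theorem \ref{T:main1}. The measure $\mu$ is already purely atomic with masses $\mu_n=1/|u'(\zeta_n)|^2$, so conditions \eqref{E:main1expressionmu} and \eqref{E:main1atoms} hold automatically. Everything thus reduces to verifying the potential condition \eqref{E:main1potential}, namely
\[
\sup_{z\in\mathbb{D}} |1-u(z)|^2 V_\mu(z)<\infty.
\]
To do this, I would invoke Lemma \ref{L:supVm}, which says this bound is equivalent to (i) $V_\mu$ being bounded on $\rho(u)$ and (ii) $\sup_{\xi\in\mathbb{T}\setminus\rho(u)}\limsup_{z\to\xi}|1-u(z)|^2V_\mu(z)<\infty$. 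Since $\rho(u)=\{1\}$, condition (i) reduces to the single numerical check that $V_\mu(1)<\infty$; for condition (ii), by Lemma \ref{L:Lemmaequiv}, it suffices to show
\[
\sup_{m\in\mathbb{Z}}\sum_{n\neq m}\frac{1}{|u'(\zeta_n)|^{2}\,|\zeta_n-\zeta_m|^2}<\infty.
\]

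Next I would plug in the explicit formulas recorded just before the statement. Using $|\zeta_n-1|^2=4/(4n^2\pi^2+1)$ and $|u'(\zeta_n)|=2/|\zeta_n-1|^2$, a direct computation gives
\[
\frac{\mu_n}{|1-\zeta_n|^2}=\frac{1}{4n^2\pi^2+1},
\]
so $V_\mu(1)=\sum_{n\in\mathbb{Z}}(4n^2\pi^2+1)^{-1}<\infty$, settling condition (i). For (ii), the formula $\zeta_k-\zeta_j=4\pi i(j-k)/[(2k\pi i-1)(2j\pi i-1)]$ yields
\[
\frac{\mu_n}{|\zeta_n-\zeta_m|^2}\asymp \frac{m^2+1}{(n^2+1)(n-m)^2}.
\]

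The core of the argument is then to show the uniform estimate
\[
\sup_{m\in\mathbb{Z}}\sum_{n\neq m}\frac{m^2+1}{(n^2+1)(n-m)^2}<\infty.
\]
I would treat this by splitting the sum, for each fixed $m$, into three regimes: $|n|\le |m|/2$ (where $|n-m|\asymp|m|$, giving a bounded tail controlled by $\sum_n 1/(n^2+1)$), $|n-m|\le |m|/2$ (where $n^2+1\asymp m^2+1$, so the summand is comparable to $1/(n-m)^2$, summable uniformly in $m$), and $|n|\ge 3|m|/2$ (where $|n-m|\asymp |n|$, yielding a summand $\lesssim (m^2+1)/n^4$, whose sum is $O(1/|m|)$). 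No single regime presents a real difficulty; the main thing to watch is that the three ranges cover $\mathbb{Z}\setminus\{m\}$ with uniform implicit constants. Once this is established, Lemma \ref{L:supVm} gives \eqref{E:main1potential}, and Theorem \ref{T:main1} concludes $\mathcal{H}(b)=\mathcal{D}_\mu$.
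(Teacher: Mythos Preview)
Your proposal is correct and follows essentially the same route as the paper: invoke Theorem \ref{T:main1}, reduce \eqref{E:main1potential} via Lemma \ref{L:supVm} to the two checks $V_\mu(1)<\infty$ and the discrete condition of Lemma \ref{L:Lemmaequiv}, and then verify the latter by a three-regime split of the sum $\sum_{n\neq m}(m^2+1)/[(n^2+1)(n-m)^2]$. The paper's split ($n>m$, $m/2<n<m$, $n\le m/2$) is slightly different from yours, but the idea is the same; just make sure your three ranges actually exhaust $\mathbb{Z}\setminus\{m\}$ (e.g.\ the points with $|m|/2<|n|<3|m|/2$ and $|n-m|>|m|/2$ fall in a gap as you wrote them, though they are trivially handled since both $|n|$ and $|n-m|$ are then $\asymp|m|$).
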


\begin{proof}
We use Lemma \ref{L:supVm} to show that the condition of Theorem \ref{T:main1} is satisfied. We first notice that
\[
V_\mu(1)=\sum_{n} \frac{1}{|u'(\zeta_n)|^2}\frac{1}{|\zeta_n-1|^2}\asymp\sum_{n\geq 1} \frac{1}{n^4} n^2<\infty.
\]
To deal with $\sup_{\mathbb{T}\setminus\rho(u)} |a|^2V_\mu$, we appeal to Lemma \ref{L:Lemmaequiv}. We have that
\begin{align*}
\sum_{n\neq m}\frac{1}{|u'(\zeta_n)|^2} \frac{1}{|\zeta_n-\zeta_m|^2} \asymp  \sum_{n\neq m} \frac{1}{n^2}\frac{m^2}{(n-m)^2},
\end{align*}
and splitting the sum in three parts, for $n>m,$ $m/2<n<m$ and $n\leq m/2$, we see that
\[
\sup_m \sum_{n\neq m} \frac{1}{n^2}\frac{m^2}{(n-m)^2} <\infty,
\]
and we conclude that $\mathcal{H}(b)=\mathcal{D}_\mu$.
\end{proof}

For this particular inner function $u$, we are able to prove that  the Cauchy transform $\mathcal{C}_\sigma$ is bounded without invoking Theorem \ref{T:main2}. The discrete Hilbert transform on $\ell^2(\mathbb{Z})$ naturally appears in this case. For other discussions about the relationship  between the discrete Hilbert transform and the model spaces we refer to \cite{BELLAVITA2025110708,Eoff1995TheDN}.

\begin{proposition}\label{L:excauchy}
Let $u$ be a one-component inner function defined as in Theorem \ref{T:mainexample}. Then the Cauchy transform $\mathcal{C}_\sigma$ is bounded in $L^2(\sigma)$.
\end{proposition}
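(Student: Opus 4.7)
\medskip

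\noindent\textbf{Proof plan.} The idea is to transfer the problem to a question about the classical discrete Hilbert transform on $\ell^2(\mathbb Z)$, exploiting the very explicit formulas available for the Clark atoms of this atomic singular inner function. From the computations carried out just before Theorem \ref{T:mainexample}, the atoms are $\zeta_n=(2n\pi i+1)/(2n\pi i -1)$, the masses satisfy $\sigma_n \asymp 1/(1+n^{2})$, and
\[
|\zeta_n-\zeta_m| \;\asymp\; \frac{|n-m|}{(1+|n|)(1+|m|)},\qquad n\neq m,
\]
while $|1-\overline{\zeta_n}\zeta_m|=|\zeta_n-\zeta_m|$ since $\zeta_n,\zeta_m\in\mathbb T$. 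In particular,
\[
\frac{\sigma_n}{|1-\overline{\zeta_n}\zeta_m|}\;\asymp\;\frac{1+|m|}{(1+|n|)\,|n-m|}.
\]

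For $f\in L^2(\sigma)$ write $f_n=f(\zeta_n)$, so that $\|f\|_{L^2(\sigma)}^{2}\asymp \sum_n |f_n|^2/(1+n^{2})$. My plan is to introduce the rescaled sequence $g_n=f_n/(1+|n|)$, which belongs to $\ell^2(\mathbb Z)$ with $\|g\|_{\ell^{2}}\asymp \|f\|_{L^2(\sigma)}$. The triangle inequality together with the estimate above gives
\[
|\mathcal{C}_\sigma f(\zeta_m)|\;\lesssim\;(1+|m|)\sum_{n\neq m}\frac{|g_n|}{|n-m|}\;=\;(1+|m|)\,|H g(m)|,
\]
where $H$ denotes the discrete Hilbert transform on $\ell^{2}(\mathbb Z)$. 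Dividing by $1+|m|$ and squaring, one obtains $|\mathcal{C}_\sigma f(\zeta_m)|^{2}/(1+m^{2})\lesssim |Hg(m)|^{2}$, whence
\[
\|\mathcal{C}_\sigma f\|_{L^{2}(\sigma)}^{2}\;\asymp\;\sum_{m}\frac{|\mathcal{C}_\sigma f(\zeta_m)|^{2}}{1+m^{2}}\;\lesssim\;\sum_{m}|Hg(m)|^{2}\;\lesssim\;\|g\|_{\ell^{2}}^{2}\;\asymp\;\|f\|_{L^{2}(\sigma)}^{2},
\]
using the classical $\ell^{2}$-boundedness of the discrete Hilbert transform in the last inequality.

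The only delicate point is to verify rigorously that the two-sided comparison $|\zeta_n-\zeta_m|\asymp |n-m|/((1+|n|)(1+|m|))$ is uniform in $n,m\in\mathbb Z$ (in particular near $n=0$ or $m=0$, where the Clark masses remain bounded and the point $\zeta_0=-1$ sits far from the spectrum $\{1\}$). This is a direct consequence of the closed-form identity $\zeta_n-\zeta_m=4\pi i(n-m)/((2n\pi i-1)(2m\pi i-1))$ already recorded in the text, and of the equally explicit expression for $|u'(\zeta_n)|$, so no further difficulty arises. Once this comparison is in place, the argument is complete, and no appeal to the general Theorem \ref{T:main2} is required.
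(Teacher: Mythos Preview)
There is a genuine gap. When you apply the triangle inequality to get
\[
|\mathcal{C}_\sigma f(\zeta_m)|\;\lesssim\;(1+|m|)\sum_{n\neq m}\frac{|g_n|}{|n-m|},
\]
the operator on the right is \emph{not} the discrete Hilbert transform: it has kernel $1/|n-m|$ rather than $1/(n-m)$. This operator is unbounded on $\ell^2(\mathbb Z)$ (take $g=\chi_{\{1,\dots,N\}}$; for $m$ in the middle third of $[1,N]$ the sum is of order $\log N$, so the $\ell^2$-norm of the output grows like $N(\log N)^2$ while $\|g\|_{\ell^2}^2=N$). The boundedness of the genuine discrete Hilbert transform relies on the cancellation coming from the odd kernel $1/(n-m)$, and that cancellation is destroyed the moment you pass to absolute values. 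Hence the chain of inequalities you wrote does not close.

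The paper's proof avoids this by never taking modulus inside the sum: from the exact identity
\[
\frac{1}{1-\overline{\zeta_m}\zeta_n}=\frac{(2m\pi i+1)(2n\pi i-1)}{4\pi i\,(n-m)}
\]
one factors out $(2n\pi i-1)$ and absorbs $(2m\pi i+1)$ into the sequence, obtaining $\mathcal{C}_\sigma f(\zeta_n)= c_n\,(\mathcal{H}_d x)(n)$ with $|c_n|\asymp 1+|n|$ and $x_m=(2m\pi i+1)f(\zeta_m)/|u'(\zeta_m)|\in\ell^2$. This is an \emph{equality}, so the signed kernel $1/(m-n)$ is preserved and the $\ell^2$-boundedness of $\mathcal{H}_d$ applies. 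Your asymptotic $|\zeta_n-\zeta_m|\asymp |n-m|/((1+|n|)(1+|m|))$ is correct but only controls the modulus; to salvage your approach you must use the closed-form expression itself, which brings you back to the paper's argument.
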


\begin{proof}
The Cauchy transform $\mathcal{C}_\sigma\colon L^2(\sigma)\to L^2(\sigma)$ defined as
\[
\mathcal{C}_\sigma f(\zeta_n)=\sum_{m\neq n}\frac{f(\zeta_m)}{|u'(\zeta_m)|} \frac{1}{1-\overline{\zeta_m}\zeta_n},
\]
in this setting becomes
\[
\mathcal{C}_\sigma f(\zeta_n)=\frac{2n\pi i-1}{4\pi i}\sum_{m\neq n}\frac{f(\zeta_m)}{|u'(\zeta_m)|} \frac{2m\pi i+1}{n-m}.
\]
We define the sequence $x=(x_m)_{m\in\mathbb{Z}}$ as
\[
x_m = (2m\pi i +1)\frac{f(\zeta_m)}{|u'(\zeta_m)|}, \qquad m\in\mathbb{Z},
\]
and we notice that
\begin{align*}
\sum_{m\in\mathbb{Z}} |x_m|^2 \asymp \sum_{m\in\mathbb{Z}} \frac{|f(\zeta_m)|^2}{|u'(\zeta_m)|} = \|f\|_{L^2(\sigma)}^2 <\infty.
\end{align*}

The discrete Hilbert transform is
\[
(\mathcal{H}_d x)(n)=\sum_{m\neq n}\frac{x(m)}{m-n}, \qquad n\in\mathbb{Z}.
\]
Plancherel and Polya \cite{Plancherel1936FonctionsEE} proved that $\mathcal{H}_d$ is bounded on $\ell^2(\mathbb{Z})$ and, consequently, we have that
\begin{align*}
\|\mathcal{C}_\sigma f\|_{L^2(\sigma)}^2 &= \sum_n \frac{1}{|u'(\zeta_m)|} \left|\frac{2n\pi i-1}{4\pi i}\sum_{m\neq n}\frac{f(\zeta_m)}{|u'(\zeta_m)|} \frac{2m\pi i+1}{n-m}\right|^2 \\
&\asymp \sum_n |\mathcal{H}_d x(n)|^2 \lesssim \|x\|_{\ell^2(\mathbb{Z})} \asymp \|f\|_{L^2(\sigma)}^2.
\end{align*}
\end{proof}

\subsection{Optimality of condition \eqref{E:main1potential}}\label{Subsection-counterexample}

Here we show that the potential condition \eqref{E:main1potential} is not induced by the fact that $u$ is one-component. More precisely, we exhibit a class of one-component Blaschke products for which \eqref{E:main1potential} is not satisfied. We introduce the Cayley transform
\[
\phi\colon\mathbb{C}_+\to\mathbb{D},\qquad \phi(w)=\frac{w-i}{w+i}.
\]
Also, $\rho$ denotes the pseudohyperbolic distance in the upper half-plane,
\[\rho(u,w)=\bigg|\frac{u-w}{u-\overline{w}}\bigg|,\qquad u,w\in\mathbb{C}_+.\]

In what follows we need interpolating sequences for the Hardy space $H^2(\mathbb{C}_+)$ of the upper half-plane. Since there will be no confusion possible, we will simply call these sequences interpolating. Recall that a sequence of points $\Lambda=(\lambda_n)$ in the upper half-plane is interpolating if for every sequence $(v_n)$ with $\sum \Im(\lambda_n)|v_n|^2<\infty$ there exists $f\in H^2(\mathbb{C}_+)$ such that $f(\lambda_n)=v_n$, $n \geq 1$. Starting from Carleson’s characterization of interpolating sequences in $H^{\infty}(\mathbb{D})$, Shapiro-Shields characterized interpolating sequences for $H^2(\mathbb{D})$. Translated to the upper half-plane, $\Lambda$ is interpolating if and only if it is separated in
the metric $\rho$ and $\mu_{\Lambda}= \sum \Im \lambda_n \delta_{\lambda_n}$ is a Carleson measure. Note that a measure $\mu$ supported in the upper half-plane is Carleson if and only if $\sum_{\lambda_n\in S_I} \Im \lambda \lesssim |I|$, where $I\subset \mathbb{R}$ is an interval and
$S_I=\{x+iy:x\in I,0<y<|I|\}$ is a Carleson box. We recall that an interpolating sequence is automatically a Blaschke sequence, i.e., satisfies $\sum_n \frac{\Im\lambda_n}{1+|\lambda_n|^2}<\infty$. With these elements in mind, we can state the following result.

\begin{proposition}
Let $\Lambda=\{\lambda_n\}_{n\geq 1}$ be an interpolating sequence in the upper half-plane $\mathbb{C}_+$, enumerated so that $\Re(\lambda_n)\leq\Re(\lambda_{n+1})$ and such that $|\lambda_n|\to\infty$ as $n\to\infty$. We also assume that $\Lambda$ satisfies the following two properties.
\begin{enumerate}[(i)]
\item There exists $c\in (0,1)$ such that for every $n\geq 1$ \begin{equation}\label{E:sep}
 \rho(\lambda_n,\lambda_{n+1})\le c.
\end{equation}
\item $\Re(\lambda_n)\geq 0$ for every $n\geq 1$.
\end{enumerate}
Then the Blaschke product on the disk
\[
B_{\phi(\Lambda)}(z)= \prod_{n\geq 1}\frac{\overline{\phi(\lambda_n)}}{|\phi(\lambda_n)|}\frac{\phi(\lambda_n)-z}{1-\overline{\phi(\lambda_n)}z},\qquad z\in\mathbb{D},
\]
is a one-component inner function that does not satisfy the potential condition \eqref{E:main1potential}.
\end{proposition}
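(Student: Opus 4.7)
My plan divides into two parts: showing that $u := B_{\phi(\Lambda)}$ is one-component, and showing that the potential condition \eqref{E:main1potential} fails. The first assertion follows from the standard characterization of one-component interpolating Blaschke products. Indeed, for an interpolating sequence $\Lambda$ the sublevel set $\{z\in\mathbb{D}:|B_{\phi(\Lambda)}(z)|<\epsilon\}$ is comparable to a union of pseudohyperbolic disks centered at the $\phi(\lambda_n)$; hypothesis (i) ensures that consecutive disks overlap, giving connectedness (see for instance the ideas in \cite{Aleksandrov2000,nicolau2021}). Moreover, since $\Re\lambda_n\geq 0$ and $|\lambda_n|\to\infty$, the zeros $\phi(\lambda_n)$ accumulate only at $1\in\mathbb{T}$, so $\rho(u)=\{1\}$.

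By Lemma \ref{L:supVm}, the potential condition \eqref{E:main1potential} fails as soon as $V_\mu(1)=\infty$. To compute this quantity I would work in the half-plane via the Cayley transform: set $\tilde{u}=u\circ\phi$, which is an inner function on $\mathbb{C}_+$ with zeros $\{\lambda_n\}$, and let $\{\tau_n\}\subset\mathbb{R}$ denote its Clark atoms, so that $\zeta_n=\phi(\tau_n)$. The identities $|\phi(\tau)-1|^2=4/(\tau^2+1)$ and $|u'(\phi(\tau))|=(\tau^2+1)|\tilde{u}'(\tau)|/2$ give
\[
V_\mu(1)\;=\;\sum_n\frac{\mu_n}{|\zeta_n-1|^2}\;=\;\sum_n\frac{1}{(\tau_n^2+1)\,|\tilde{u}'(\tau_n)|^2}.
\]
Translating Bessonov's condition (iv) of Theorem \ref{T:Bessonov} to half-plane coordinates yields $|\tau_n-\tau_n^+|\asymp 1/|\tilde{u}'(\tau_n)|$, so this sum is comparable to $\sum_n(\tau_n-\tau_n^+)^2/(\tau_n^2+1)$. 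It therefore suffices to exhibit infinitely many Clark atoms $\tau_n\to-\infty$ whose gaps to the next atom satisfy $|\tau_n-\tau_n^+|\gtrsim|\tau_n|$.

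Since $\Re\lambda_n\geq 0$, no zero of $\tilde{u}$ lies to the left of the imaginary axis, and the total winding of $\tilde{u}$ on $(-\infty,0)$ equals
\[
\int_{-\infty}^{0}|\tilde{u}'(t)|\operatorname{d}\!t\;=\;\sum_n\bigl(\pi-2\arctan(\Re\lambda_n/\Im\lambda_n)\bigr)\;\asymp\;\sum_n\min\bigl(1,\,\Im\lambda_n/\Re\lambda_n\bigr).
\]
The pseudohyperbolic constraint (i) gives both $\Im\lambda_{n+1}\asymp\Im\lambda_n$ and $|\Re(\lambda_{n+1}-\lambda_n)|\lesssim\Im\lambda_n$, so that $\Re\lambda_n\lesssim S_n:=\sum_{k<n}\Im\lambda_k$. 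The hypothesis $|\lambda_n|\to\infty$ forces $S_n\to\infty$, and then a standard telescoping argument yields $\sum_n\Im\lambda_n/S_{n+1}=\infty$, hence $\sum_n\min(1,\Im\lambda_n/\Re\lambda_n)=\infty$. Thus the winding on $(-\infty,0)$ is infinite, producing infinitely many Clark atoms which, since $|\tilde{u}'|$ is locally bounded on $\mathbb{R}$, can only accumulate at $-\infty$. For the gap estimate, a dyadic shell decomposition of $|\tilde{u}'(t)|=\sum_n 2\Im\lambda_n/|\lambda_n-t|^2$ on the scales $|t|+\Re\lambda_n+\Im\lambda_n$, combined with the Carleson measure property of $\mu_\Lambda=\sum_n\Im\lambda_n\,\delta_{\lambda_n}$, produces the decay $|\tilde{u}'(t)|\lesssim 1/|t|$ as $t\to-\infty$; consequently $|\tau_n-\tau_n^+|\gtrsim|\tau_n|$, each term $(\tau_n-\tau_n^+)^2/(\tau_n^2+1)\gtrsim 1$, and $V_\mu(1)=\infty$. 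The most delicate step will be this decay estimate: the naive bound $|\tilde{u}'(t)|\leq\sum\Im\lambda_n/t^2$ is inadequate when $\sum\Im\lambda_n$ diverges, and the Carleson property must be carefully applied within each dyadic shell to handle shallow zeros ($\Im\lambda_n\leq|t|$) and tall zeros ($\Im\lambda_n>|t|$) on an equal footing.
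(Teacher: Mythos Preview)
Your proposal is correct and follows a route close to the paper's, with two differences in packaging worth noting.

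First, both arguments hinge on the half-plane derivative estimate $|B_\Lambda'(x)|\lesssim 1/|x|$ for $x<0$. The paper obtains it in one stroke by quoting \cite[Equation~26]{BARANOV2005116}, which gives $1/|B_\Lambda'(x)|\asymp\operatorname{dist}(x,\Lambda)\asymp|x|$ (the last $\asymp$ being immediate from $\Re\lambda_n\ge 0$), whereas you derive the upper bound from scratch via the Carleson measure condition and a dyadic shell decomposition. Your hands-on approach is correct and more self-contained. You also supply an explicit winding-number argument (Abel--Dini type) for the existence of infinitely many Clark atoms on $(-\infty,0)$, a fact the paper simply asserts.

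Second, you target condition~(i) of Lemma~\ref{L:supVm}, showing directly that
\[
V_\mu(1)=\sum_n \frac{1}{(\tau_n^2+1)\,|\tilde u'(\tau_n)|^2}=\infty.
\]
The paper instead shows that the double sum of Lemma~\ref{L:Lemmaequiv} diverges (condition~(ii) of Lemma~\ref{L:supVm}). But the paper's final estimate $\sum_{n<m} t_n^2|x_n|^2\asymp m$ rests on the per-term fact $t_n^2|x_n|^2\asymp 1$, which is exactly your $\mu_n/|\zeta_n-1|^2\asymp 1$; so the two computations are doing the same work, and yours is marginally more direct. One small redundancy in your outline: the detour through Bessonov~(iv) and the gaps $|\tau_n-\tau_n^+|$ is unnecessary (and requires checking that the Cayley transform preserves the comparison, which it does but only after noting $|\tau_n^+|\asymp|\tau_n|$). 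The derivative estimate $|\tilde u'(\tau_n)|\lesssim 1/|\tau_n|$ already yields each summand $\gtrsim 1$ immediately.
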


\begin{proof}
We introduce $B_\Lambda$, the Blaschke product on $\mathbb{C}_+$ associated to $\Lambda$. Since $\Lambda$ is interpolating, so will be $\phi(\Lambda).$ In view of \eqref{E:sep} we can apply \cite[Theorem 6]{cimamortini}, showing that $B_{\phi(\Lambda)}$ is one-component. Since such a property is conserved by the conformal mapping $\phi$, $B_\Lambda$ is one-component, as well. Now, we show that $B_{\phi(\Lambda)}$ does not satisfy the potential condition \eqref{E:main1potential}. Since $\phi(\lambda_n)\to 1$ as $n\to\infty$, the boundary spectrum of $B_{\phi(\Lambda)}$ satisfies $\rho(B_{\phi(\Lambda)})=\{1\}$. We focus on the (infinitely many) Clark atoms $e^{it_n}, n>0,$ with $t_n\in(0,\pi/2)$. We order them so that $(t_n)_n$ is monotonically decreasing to $0$ as $n\to+\infty$. The function $\phi^{-1}(e^{it})$ for $t\in(0,2\pi)$ increases as $t$ increases, and maps conformally $(0,\pi/2)$ to $(-\infty,-1)$. We move to the Clark atoms $x_n=\phi^{-1}(e^{it_n})$ of $B_\Lambda$ on $\mathbb{R}_-$. With this enumeration, $x_{1}$ is the closest to $-1$ and $-\infty<x_{n+1}< x_n< -1$, for every $n>0$. Using \cite[Equation 26]{BARANOV2005116}, we have that
\[
\frac{1}{|B_{\Lambda}'(x)|}\asymp \operatorname{dist}(x,\Lambda)\asymp |x|,\qquad x<0.
\]

Let us move back to the disk. Since
\[
e^{it_n}=\frac{x_n-i}{x_n+i},
\]
we have that uniformly in $n>0$
\begin{equation} \label{E:exampleneg1}
t_n=\arg \frac{x_n-i}{x_n+i}=\arctan \frac{-2x_n}{x_n^2-1} \asymp \frac{1}{|x_n|}.
\end{equation}
 Also,
\[
 |B'_{\phi(\Lambda)}(z)|=\frac{2}{|1-z|^2}\left|B_{\Lambda}'\left(i\frac{1+z}{1-z}\right)\right|,\qquad z\in\overline{\mathbb{D}}\setminus\{1\},
\]
and hence
\[
 |B'_{\phi(\Lambda)}(e^{it_n})|\asymp \frac{1}{t_n^2}|B'_{\Lambda}(x_n)|\asymp \frac{1}{t_n^2}\frac{1}{|x_n|},
\]
 uniformly in $n>0$. To show that the potential condition is not satisfied it is enough to consider the Clark points $t_n\in[0,\pi/2]$. For this, we use Lemmas \ref{L:Lemmaequiv} and \ref{L:supVm}. For $m\geq1$,
 \begin{align*}
 \sum_{n\neq m}\frac{1}{|B'_{\phi(\Lambda)}(e^{it_n})|^2}\frac{1}{|e^{it_n}-e^{it_m}|^2} \gtrsim \sum_{n<m}\frac{t_n^4|x_n|^2}{(t_n-t_m)^2}  \geq\sum_{n<m}\frac{t_n^4|x_n|^2}{t_n^2} \asymp m,
 \end{align*}
 where the second inequality follows from the fact that $t_n>t_m>0$, and the conclusion follows from \eqref{E:exampleneg1}. In particular,
 \[
 \sup_{m\in\mathbb{N}}\sum_{n\neq m}\frac{1}{|B'_{\phi(\Lambda)}(e^{it_n})|^2}\frac{1}{|e^{it_n}-e^{it_m}|^2}=+\infty,
 \]
 proving that $B_{\phi(\Lambda)}$ does not satisfy the potential condition \eqref{E:main1potential}.
\end{proof}

For explicit examples of sequences $\Lambda$, one can consider $\lambda_n=n+i$ or, more generally, \[
 \lambda_n=n^{\alpha}+i\frac{1}{n^{1-\alpha}},
\]
for $\alpha\in (0,1]$.

A direct computation shows that $\Lambda$ is a Blaschke sequence and that $0<c_1<\rho(\lambda_n,\lambda_{n+1})<c_2<1$ for every $n$. It is also straightforward to check that $\mu=\sum_n \Im \lambda_n \delta_{\lambda_n}$ satisfies the Carleson condition.

We remark that, if we symmetrize this set $\Lambda$, then we get positive examples for the potential condition \eqref{E:main1potential}: setting
\[\widetilde{\Lambda}=\{x+iy\in\mathbb{C}\colon |x|+iy\in\Lambda\},\]
one can show that the Blaschke product $B_{\phi(\widetilde{\Lambda})}$ on $\mathbb{D}$ is a one-component inner function that satisfies the potential condition \eqref{E:main1potential}, for certain values of $\alpha$.

\section{A Perturbation result: proof of Theorem \ref{T:perturbation}} \label{S:perp-result}
\begin{proof}[Proof of Theorem \ref{T:perturbation}]
We verify that $\theta$ is one-component by verifying the conditions of Theorem \ref{T:Bessonov}. Since $\lim_n\sigma_n\alpha_n =0$, it is easy to see that $\{\zeta_n\}_n$ and $\{t_n\}_n$ have the same accumulation points. In particular, $\tau(\lambda)$ coincides with $\tau(\sigma)$. Moreover, by the definition of $\alpha$,
\begin{align*}
|t_n-\zeta_n| \leq  \sigma_n\alpha_n
&\leq B_{\sigma}\min\left(|\zeta_n-\zeta_n^{+}|, |\zeta_n-\zeta_n^{-}|\right)\|\alpha\|_{\infty} \\
&\leq \frac{A_\sigma}{3B_\sigma}\min\left(|\zeta_n-\zeta_n^{+}|, |\zeta_n-\zeta_n^{-}|\right) \\
&\leq \frac{1}{3} \min\left(|\zeta_n-\zeta_n^{+}|, |\zeta_n-\zeta_n^{-}|\right),
\end{align*}
so that if $\zeta_n^+=\zeta_{n+1}$ and $\zeta_n^-=\zeta_{n-1}$, then $t_n$ has $t_{n+1}$ and $t_{n-1}$ as neighbors, and the first condition of Theorem \ref{T:Bessonov} holds true.

Since $|\epsilon_n|\leq \|\alpha\|_\infty \sigma_n\leq\sigma_n/2$, we have that $\lambda_n \asymp \sigma_n$. Also, if $s=n+1$ or $s=n-1$,
\[
\sigma_s\alpha_s\leq \frac{A_\sigma}{3B_\sigma}|\zeta_s-\zeta_n|\leq \frac{1}{3}  \min\left(|\zeta_n-\zeta_n^{+}|, |\zeta_n-\zeta_n^{-}|\right),
\]
by Theorem \ref{T:Bessonov}, and trivially,
\[
\sigma_s\alpha_s\leq \frac{A_\sigma}{3B_\sigma}|\zeta_s-\zeta_n|\leq \frac{A_\sigma}{3B_\sigma}\max\left(|\zeta_n-\zeta_n^{+}|, |\zeta_n-\zeta_n^{-}|\right).
\]
Considering two consecutive atoms $t_n, t_{s}$ of $\lambda$, we have that
\begin{align*}
|t_n-t_s|&\leq |t_n-\zeta_{n}|+ |\zeta_{n}-\zeta_s|+ |\zeta_s-t_s| \\
&\leq \bigg(\frac{A_\sigma}{3B_\sigma}+ \frac{B_\sigma}{A_\sigma}+\frac{1}{3}\bigg)\min\left(|\zeta_n-\zeta_n^{+}|, |\zeta_n-\zeta_n^{-}|\right).
\end{align*}
On the other hand,
\begin{align*}
|t_n-t_{s}| &\geq |\zeta_n-\zeta_s|-|t_n-\zeta_n|-|t_s-\zeta_s|\\
&\geq\frac{A_\sigma}{B_\sigma}\max\left(|\zeta_n-\zeta_n^{+}|, |\zeta_n-\zeta_n^{-}|\right)-\sigma_{n}\alpha_{n}-\sigma_{s}\alpha_{s}\\
&\geq \frac{A_\sigma}{3B_\sigma}\max\left(|\zeta_n-\zeta_n^{+}|, |\zeta_n-\zeta_n^{-}|\right).
\end{align*}
Since $\lambda_n\asymp\sigma_n$ and $\sigma_n,\zeta_n$ satisfy the second condition of Theorem \ref{T:Bessonov}, so will $\lambda_n$ and $t_n$.

Finally, we show that the third condition is verified, i.e., that
\[
\mathcal{C}_\lambda 1(z)=\int_{\mathbb{T}\setminus\{z\}} \frac{\operatorname{d}\!\lambda(\xi)}{1-\overline{\xi}z}, \qquad z\in\mathbb{T},
\]
is uniformly bounded on $z\in a(\lambda)$. We write
\begin{eqnarray}
|\mathcal{C}_\lambda 1(t_n)| &=& \bigg|\sum_{m\neq n}\frac{\lambda_m}{t_m-t_n} \bigg| \notag\\
&\leq& \bigg|\sum_{m\neq n}\frac{\lambda_m}{\zeta_m-\zeta_n} \bigg| + \bigg|\sum_{m\neq n}\bigg(\frac{\lambda_m}{t_m-t_n} -\frac{\lambda_m}{\zeta_m-\zeta_n}\bigg) \bigg|. \label{E:examples}
\end{eqnarray}
Concerning the first summand,
\begin{align*}
\bigg|\sum_{m\neq n}\frac{\lambda_m}{\zeta_m-\zeta_n} \bigg| &\leq \bigg|\sum_{m\neq n}\frac{\sigma_m}{\zeta_m-\zeta_n} \bigg| + \bigg|\sum_{m\neq n}\frac{\epsilon_m}{\zeta_m-\zeta_n} \bigg|\\
&\leq |\mathcal{C}_\sigma1(\zeta_n)| +\sum_{m\neq n}\frac{\sigma_m\alpha_m}{|\zeta_m-\zeta_n|}.
\end{align*}
We notice that $|\mathcal{C}_\sigma(1)|$ is uniformly bounded on $a(\sigma)$ since $u$ is one-component, and that the other term is finite by condition \eqref{E:pertubation2}. Concerning the second summand in \eqref{E:examples}, we have
\begin{align*}
\bigg|\sum_{m\neq n}&\frac{\lambda_m}{t_m-t_n} -\frac{\lambda_m}{\zeta_m-\zeta_n} \bigg|\leq \sum_{m\neq n} \frac{\lambda_m\big(|\zeta_m-t_m|+|\zeta_n-t_n|\big)}{|t_m-t_n|\,|\zeta_m-\zeta_n|}\\
&\leq |t_n-\zeta_n|\sum_{m\neq n} \frac{\lambda_m}{|t_m-t_n|\,|\zeta_m-\zeta_n|}+\sum_{m\neq n} \frac{\lambda_m|\zeta_m-t_m|}{|t_m-t_n|\,|\zeta_m-\zeta_n|}.
\end{align*}
Notice that, for fixed $n$, for every $m\neq n$,
\begin{align*}
|\zeta_m-\zeta_n| &\leq |\zeta_m-t_m|+|t_m-t_n|+|t_n-\zeta_n|\\
&\leq \alpha_m\sigma_m + \alpha_n\sigma_n+|t_m-t_n|\\
&\lesssim \lambda_m+\lambda_n +|t_m-t_n|\\
&\leq B_\lambda(|t_m-t_m^\pm|+|t_n-t_n^\pm|)+|t_m-t_n|\\
&\leq (2 B_\lambda+1)|t_m-t_n|,
\end{align*}
where $B_\lambda$ is the constant that realizes the second condition of Theorem \ref{T:Bessonov} for the measure $\lambda$, that we have shown to hold. In a similar way one shows that
\[
|t_m-t_n|\lesssim |\zeta_m-\zeta_n|,
\]
uniformly for $m\neq n$.
By Lemma \ref{L:integralestimate} and Theorem \ref{T:Bessonov}, we have that
\begin{eqnarray*}
\sum_{m\neq n} \frac{\lambda_m}{|t_m-t_n|\,|\zeta_m-\zeta_n|}
&\lesssim& \sum_{m\neq n} \frac{\sigma_m}{|\zeta_m-\zeta_n|^2}\\
&=& \int_{\mathbb{T}\setminus I_{\zeta_n}}\frac{1}{|\zeta-\zeta_n|^2}\operatorname{d}\!\sigma(\zeta)\lesssim\frac{1}{\sigma_n},
\end{eqnarray*}
where $I_\zeta$ is the open arc having for extremes $\zeta_n^+,\zeta_n^-$. In particular,
\[
|t_n-\zeta_n|\sum_{m\neq n} \frac{\lambda_m}{|t_m-t_n|\,|\zeta_m-\zeta_n|} \lesssim \alpha_n
\]
is uniformly bounded in $n$. Concerning the remaining term,
\begin{eqnarray*}
\sum_{m\neq n} \frac{\lambda_m|\zeta_m-t_m|}{|t_m-t_n|\,|\zeta_m-\zeta_n|}
&\lesssim& \sum_{m\neq n} \frac{\lambda_m\sigma_m\alpha_m}{|\zeta_m-\zeta_n|^2}\\
&\leq& \sup_{m\neq n}\frac{\sigma_m}{|\zeta_m-\zeta_n|}\sum_{m\neq n}\frac{\lambda_m\alpha_m}{|\zeta_m-\zeta_n|}.
\end{eqnarray*}
Now, since $\sigma_m\leq B_\sigma |\zeta_m-\zeta_m^\pm|\leq B_\sigma|\zeta_m-\zeta_n|$ for every $m\neq n$, we conclude using again the assumption \eqref{E:pertubation2} that $\mathcal{C}_\lambda1$ is uniformly bounded on $a(\lambda)$. Therefore, by Theorem \ref{T:Bessonov}, we conclude that $\theta$ is a one-component inner function.
\end{proof}

It was observed at the end of the proof that $\sigma_m\leq B_\sigma |\zeta_m-\zeta_m^\pm|\leq B_\sigma|\zeta_m-\zeta_n|$ for every $m\neq n$. In particular, $\alpha\in\ell^1$ implies condition \eqref{E:pertubation2} of \ref{T:perturbation}. As an application of Theorem \ref{T:perturbation}, we are able to construct numerous examples of equalities $\mathcal{H}(b)=\mathcal{D}_\mu$, starting from the explicit one that was established in Subsection \ref{Subsection-example}.

\begin{corollary}\label{P:examples}
Let $u$ be a one-component inner function, let $\{\zeta_n\}_n$ be its Clark atoms and $\sigma$ its Clark measure. We also assume that $\mathcal{H}(b)=\mathcal{D}_\nu$, where $b=(1+u)/2$ and $\nu$ is a suitable measure satisfying the conditions of Theorem \ref{T:main1}. We consider a sequence of positive numbers $\alpha=(\alpha_n)_n\in \ell^2$ such that $\|\alpha\|_{\infty} \leq\min\{(3B_\sigma)^{-1},A_\sigma/3B_\sigma^2, 1/2\}$. If we take points $t_n\in\mathbb{T}$ and positive numbers $\lambda_n=\sigma_n + \epsilon_n$ that satisfy the assumptions of Theorem \ref{T:perturbation}, then we have that the measure
\[
\lambda=\sum_{n}\lambda_n \delta_{t_n}
\]
is the Clark measure associated to a one-component inner function $\theta$, that satisfies $\mathcal{H}( (1+\theta)/2)=\mathcal{D}_\mu$, with
\[
\mu=\sum_{n}\lambda_n^2 \delta_{t_n}.
\]
\end{corollary}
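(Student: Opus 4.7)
My plan is to derive the equality $\mathcal{H}((1+\theta)/2) = \mathcal{D}_\mu$ by applying Theorem \ref{T:main1} to $\theta$ and $\mu$. First, Theorem \ref{T:perturbation} guarantees that $\lambda$ is the Clark measure $\sigma^0$ of a one-component inner function $\theta$, so its atoms $\{t_n\}_n$ satisfy $\theta(t_n)=1$ and $\lambda_n = 1/|\theta'(t_n)|$. Consequently, the measure $\mu = \sum_n \lambda_n^2 \delta_{t_n}$ trivially meets conditions \eqref{E:main1expressionmu} and \eqref{E:main1atoms} of Theorem \ref{T:main1}, being discretely supported on $\theta^{-1}(1)$ with masses exactly equal to $1/|\theta'(t_n)|^2$. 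Thus the entire proof reduces to verifying the potential condition \eqref{E:main1potential}, i.e., $\sup_{\mathbb{D}} |a_\theta|^2 V_\mu < \infty$, where $a_\theta = \gamma_\theta (1-\theta)/2$.

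Noting that $\mu$ is precisely of the form $\sum_n (1/|\theta'(t_n)|^2)\delta_{t_n}$, I would invoke Lemma \ref{L:supVm} applied to $\theta$, which reduces the task to: (i) $\sup_{\rho(\theta)} V_\mu < \infty$, and (ii) $\sup_{\xi \in \mathbb{T}\setminus\rho(\theta)} \limsup_{z \to \xi} |1-\theta(z)|^2 V_\mu(z) < \infty$. Since $\rho(\theta) = \tau(\lambda) = \tau(\sigma) = \rho(u)$ as observed in the proof of Theorem \ref{T:perturbation}, both conditions concern the same set as for $u$.

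Condition (ii) is the easier of the two. By Lemma \ref{L:Lemmaequiv} for $\theta$, it is equivalent to $\sup_m \sum_{n \neq m} \mu_n/|t_n - t_m|^2 < \infty$. Extending the reasoning of the proof of Theorem \ref{T:perturbation} from consecutive to arbitrary pairs, one uses that at least one of $\zeta_n^\pm$ lies between $\zeta_n$ and $\zeta_m$ to get $\sigma_n \leq B_\sigma|\zeta_n - \zeta_m|$ for all $n \neq m$; together with $\|\alpha\|_\infty \leq (3B_\sigma)^{-1}$ this yields $|t_n - t_m| \asymp |\zeta_n - \zeta_m|$. Since $\mu_n = \lambda_n^2 \asymp \sigma_n^2 \asymp \nu_n$, we obtain $\sum_{n\neq m} \mu_n/|t_n-t_m|^2 \asymp \sum_{n\neq m} \nu_n/|\zeta_n-\zeta_m|^2$, which is uniformly bounded in $m$ by Lemma \ref{L:Lemmaequiv} applied to $(u, \nu)$ together with the standing hypothesis $\mathcal{H}(b)=\mathcal{D}_\nu$.

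The main obstacle is condition (i), and it is precisely here that the $\ell^2$ hypothesis on $\alpha$ becomes essential. For $\xi \in \rho(u)$, I would split each summand $\mu_n/|\xi - t_n|^2$ according to whether $|\xi - \zeta_n| \geq 2\sigma_n \alpha_n$. In the good case, the triangle inequality gives $|\xi - t_n| \geq \tfrac{1}{2}|\xi - \zeta_n|$, so the corresponding contribution is controlled by $4\sum_n \mu_n/|\xi - \zeta_n|^2 \asymp V_\nu(\xi)$, uniformly bounded by Lemma \ref{L:supVm} for $(u,\nu)$. In the bad case, $|\xi - \zeta_n| < 2\sigma_n \alpha_n$ combined with $\sigma_n \leq C_u|\zeta_n - \xi|$ (from \eqref{dist spectrum}) forces $\alpha_n \geq 1/(2C_u)$; since $\alpha \in \ell^2$, only finitely many indices can satisfy this, independently of $\xi$. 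For each bad index, applying \eqref{dist spectrum} to $\theta$ yields $|t_n - \xi| \geq \lambda_n/C_\theta$, hence $\mu_n/|\xi-t_n|^2 \leq C_\theta^2$, so the bad contribution is bounded by a $\xi$-independent constant. Combining both cases completes the verification of \eqref{E:main1potential}, and Theorem \ref{T:main1} delivers $\mathcal{H}((1+\theta)/2) = \mathcal{D}_\mu$.
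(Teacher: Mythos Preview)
Your argument has a gap at the very first step. You invoke Theorem \ref{T:perturbation} to obtain that $\theta$ is one-component, but that theorem requires condition \eqref{E:pertubation2} on $\alpha$, namely $\sup_n \sum_{m\neq n} \sigma_m\alpha_m/|\zeta_n-\zeta_m| < \infty$, which is \emph{not} among the hypotheses of the Corollary: the statement only assumes $\alpha\in\ell^2$ with the stated $\ell^\infty$ bound, and the clause ``satisfy the assumptions of Theorem \ref{T:perturbation}'' refers to the constraints $|t_n-\zeta_n|\leq\sigma_n\alpha_n$ and $|\epsilon_n|\leq\sigma_n\alpha_n$. The paper's proof begins precisely by supplying this missing piece, via Cauchy--Schwarz:
\[
\sum_{m\neq n}\frac{\sigma_m\alpha_m}{|\zeta_n-\zeta_m|}\leq \|\alpha\|_2\Big(\sum_{m\neq n}\frac{\sigma_m^2}{|\zeta_n-\zeta_m|^2}\Big)^{1/2},
\]
the inner sum being uniformly bounded because $\sigma_m^2\asymp\nu_m$ (Theorem \ref{T:main1}) together with the hypothesis $\mathcal{H}(b)=\mathcal{D}_\nu$ and Lemma \ref{L:Lemmaequiv}. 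This is where the $\ell^2$ condition is truly essential; your good/bad split for condition (i) of Lemma \ref{L:supVm} actually uses only that $\alpha_n\to 0$, so you have misidentified the role of the $\ell^2$ hypothesis.

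Once this is fixed, your verification of (ii) of Lemma \ref{L:supVm} matches the paper's. For (i), your case split is correct but more elaborate than needed: the paper simply notes that the uniform equivalence $|\zeta_m-\zeta_n|\asymp|t_m-t_n|$ established in the proof of Theorem \ref{T:perturbation} passes to the limit along a sequence of atoms $\zeta_{m_k}\to\eta\in\rho(u)=\tau(\sigma)$ (such sequences exist, and then $t_{m_k}\to\eta$ as well), yielding $|\eta-\zeta_n|\asymp|\eta-t_n|$ uniformly in $n$ and hence $V_\mu(\eta)\asymp V_\nu(\eta)$ on $\rho(u)$ directly, with no case distinction.
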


\begin{proof}
We show that the sequence $\alpha$ satisfies the condition \eqref{E:pertubation2} of Theorem~\ref{T:perturbation}. For every $n$, by the Cauchy-Schwarz inequality
\[
\sum_{m\neq n}\frac{\sigma_m\alpha_m}{|\zeta_n-\zeta_m|}\leq \|\alpha\|_2\bigg(\sum_{m\neq n}\frac{\sigma_m^2}{|\zeta_n-\zeta_m|^2}\bigg)^\frac{1}{2},
\]
and this quantity is uniformly bounded by Theorem \ref{T:main1} and Lemma \ref{L:Lemmaequiv}. Then, $\theta$ is a one-component inner function. We already know that $\rho(\theta)=\rho(u)$. It is also clear by the proof of Theorem \ref{T:perturbation} that $|\eta-\zeta_n|\asymp |\eta-t_n|$ for every $\eta\in\rho(\theta)$, uniformly in $n$. In particular,
\[
V_\mu(\eta)\asymp V_\nu(\eta), \qquad \eta\in\rho(u).
\]
Again, reasoning as in the proof of Theorem \ref{T:perturbation},
\[
\sup_m\sum_{n\neq m} \frac{\sigma_n^2}{|\zeta_n-\zeta_m|^2}\asymp \sup_m\sum_{n\neq m} \frac{\lambda_n^2}{|t_n-t_m|^2},
\]
and, by Theorem \ref{T:main1} and Lemma \ref{L:Lemmaequiv}, we have that
\[
\sup_{\eta \in \mathbb{T}\setminus \rho(\theta)} |1-\theta(\eta)|^2V_\mu(\eta)<\infty.
\]
As a result, using  Lemma \ref{L:supVm} we conclude that the inner function $\theta$ satisfies the condition of Theorem \ref{T:main1}, consequently,
$\mathcal{H}( (1+\theta)/2)=\mathcal{D}_\mu$.
\end{proof}

\section{A variation of the Brown-Shields conjecture} \label{S:brownshields}
The Brown-Shields conjecture concerns cyclicity in Dirichlet spaces.  Given a space $X$ that is closed under the action of the forward shift, a function $f\in X$ is cyclic if the set of products $pf$ with polynomials $p$ is dense in $X$.  Specifically, the conjecture, posed by Brown and Shields \cite{Brown1984CyclicVI}, asserts that a function $f$ in the classical Dirichlet space is cyclic if and only if it is an outer function and its zero set on the unit circle $\mathcal{Z}(f)$ has zero capacity. Here,
\[\mathcal{Z}(f) = \lbrace \zeta \in \mathbb{T} : \lim_{r \to 1^-} |f(r\zeta)| = 0 \rbrace.\]

As it turns out, a positive answer to this conjecture can be given in certain $\mathcal{H}(b)$ spaces when they are actually equal to a $\mathcal{D}_\mu$ space. While we do not really use techniques from the preceding sections, it seemed interesting to mention the following result which explores the equality of spaces discussed previously. In particular, the fact that this equality holds for more general symbols than rational ones may give an additional motivation for this characterization.

\begin{theorem}\label{T:main3}
Let $b$ be an analytic function with $\|b\|_{H^\infty}=1$ and  $\mathcal{H}(b)=\mathcal{D}_\mu$. If $f \in \mathcal{H}(b)$ is such that the set
\begin{equation}\label{E:Tmain3set}
\operatorname{supp}(\mu)\cap \{z\in\overline{\mathbb{D}}\colon \liminf_{w\to z}|f(w)|=0\}
\end{equation}
 is countable, then the following assertions are equivalent:
\begin{itemize}
\item[(i)] $f$ is cyclic in $\mathcal{H}(b)$.
\item[(ii)] $f$ is an outer function and the $\mathcal{H}(b)$-capacity of $\mathcal{Z}(f)$ is zero.
\end{itemize}
\end{theorem}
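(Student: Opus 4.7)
The plan is to reduce the theorem to a direct application of Theorem~1 of \cite{ELFALLAH20163262}, which establishes exactly this cyclicity dichotomy but formulated inside the harmonically weighted Dirichlet space $\mathcal{D}_\mu$. The role of the hypothesis $\mathcal{H}(b)=\mathcal{D}_\mu$ is then to transfer the statement back into the de Branges--Rovnyak setting.

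First, since the equality $\mathcal{H}(b)=\mathcal{D}_\mu$ is an identity of sets of holomorphic functions, the closed graph theorem forces the identity map between the Hilbert spaces $(\mathcal{H}(b),\|\cdot\|_{\mathcal{H}(b)})$ and $(\mathcal{D}_\mu,\|\cdot\|_{\mathcal{D}_\mu})$ to be bounded in both directions, so the two norms are equivalent. Two consequences follow immediately. On the one hand, cyclicity of $f$ is a purely topological notion (density of $\{pf : p \text{ polynomial}\}$), so $f$ is cyclic in $\mathcal{H}(b)$ if and only if $f$ is cyclic in $\mathcal{D}_\mu$; polynomials are dense in both spaces. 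On the other hand, the Hilbert-space capacities attached to the two spaces, defined variationally as infima of squared norms over admissible functions, are comparable in the two settings, so the vanishing of the $\mathcal{H}(b)$-capacity of $\mathcal{Z}(f)$ is equivalent to the vanishing of its $\mathcal{D}_\mu$-capacity.

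Next, I would invoke Theorem~1 of \cite{ELFALLAH20163262} applied to $\mathcal{D}_\mu$ and to the function $f$. The hypothesis of that result is precisely the countability of the set appearing in \eqref{E:Tmain3set}, and its conclusion is that cyclicity of $f$ in $\mathcal{D}_\mu$ is equivalent to $f$ being outer together with $\mathcal{Z}(f)$ having vanishing $\mathcal{D}_\mu$-capacity. Combining this equivalence with the two transfers established in the previous paragraph yields the desired equivalence (i)~$\Leftrightarrow$~(ii) inside $\mathcal{H}(b)$. The only technical point to verify is that the notion of $\mathcal{H}(b)$-capacity used in (ii) coincides, up to comparability, with the $\mathcal{D}_\mu$-capacity used in \cite{ELFALLAH20163262}; this is a bookkeeping exercise once the variational definitions are spelled out and the norm equivalence is applied, and no substantive obstacle remains beyond correctly matching the definitions.
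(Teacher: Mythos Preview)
Your proposal is correct and follows essentially the same route as the paper: transfer cyclicity to $\mathcal{D}_\mu$ via the norm equivalence, invoke Theorem~1 of \cite{ELFALLAH20163262}, and then transfer the capacity condition back. The one place where you are slightly too casual is in calling the capacity comparison ``bookkeeping.'' Norm equivalence alone gives you comparability of the $\mathcal{H}(b)$- and $\mathcal{D}_\mu$-capacities \emph{only if both are defined with the same class of admissible test functions}. The paper introduces two natural $\mathcal{H}(b)$-capacities, $c_{b,1}$ (test functions with $|f|\geq 1$ near $E$) and $c_{b,2}$ (test functions with $|f|=1$ near $E$), and it is not a priori clear that they are comparable; this is an open question of Fricain--Grivaux in general $\mathcal{H}(b)$ spaces. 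The paper handles this via Proposition~\ref{P:cyclicity}, whose proof uses the Richter--Sundberg machinery specific to $\mathcal{D}_\mu$ (passing to the outer part and then to $f_o\wedge 1$ without increasing the norm). So the step you flag at the end is exactly the substantive one, and it is not merely a matter of matching definitions: you need the $\mathcal{D}_\mu$ structure to make both choices of capacity agree.
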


We point out that when the measure $\mu$ takes the specific form given in \eqref{E:main1expressionmu}, then the set in \eqref{E:Tmain3set} is automatically countable. This provides an explicit characterization for the cyclic vectors of such $\mathcal{H}(b)$ spaces.

Several recent works have advanced the study of cyclic vectors in de Branges--Rovnyak spaces. Fricain, Mashreghi, and Seco \cite{Seco} characterized the cyclic vectors in the $\mathcal{H}(b)$ spaces for which the Toeplitz operator $T_{a/\overline{a}}$, where $a$ is the Pythagorean mate of $b$, is invertible in $L^2(\mathbb{T})$. Bergman \cite{Bergman2023OnCI} characterized the cyclic elements of $\mathcal{H}(b)$ by relating the boundary spectrum of their outer parts to $\rho(b)$. Furthermore, Fricain and Lebreton \cite{Fricain2024CyclicityOT} provided sufficient conditions for the cyclicity of $f\in \mathcal{H}(b)$ using the Corona Theorem.

In this section we consider another approach introduced by Fricain and Grivaux in \cite{FricainGrivaux}. For a set $E\subset \mathbb{T}$, we define the quantities
\[
c_{b,1}(E)=\inf\{\|f\|^2_{\mathcal{H}(b)} \colon f \in \mathcal{H}(b),  |f|\geq 1 \text{ a.e. on a neighbourhood of } E\}
\]
and
\[
c_{b,2}(E)=\inf\{\|f\|^2_{\mathcal{H}(b)} \colon f \in  \mathcal{H}(b),  |f|=1 \text{ a.e. on a neighborhood of } E\}.
\]
It is clear that
\[
c_{b,1}(E)\leq c_{b,2}(E),
\]
but in general it is not known whether the reverse holds. However, when $\mathcal{H}(b)=\mathcal{D}_\mu$, we can show that $c_{b,1}$ and $c_{b,2}$ are comparable. This provides a partial answer to Question 3.4 posed by Fricain and Grivaux in \cite{FricainGrivaux}, concerning whether $c_{b,1}$ and $c_{b,2}$ are comparable in every $\mathcal{H}(b)$ space.

\begin{proposition}\label{P:cyclicity}
Let $b \in H^{\infty}$ with $\|b\|_{H^\infty} = 1$ and $\mu$ a finite measure on $\mathbb{T}$. If
$\mathcal{H}(b) = \mathcal{D}_{\mu}$, then there exists a positive constant $A$ such that for every $E \subset \mathbb{T}$
\[
c_{b,2}(E) \le A c_{b,1}(E).
\]
\end{proposition}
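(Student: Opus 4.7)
The plan is to associate to each function $f$ admissible for $c_{b,1}(E)$ a function $g$ admissible for $c_{b,2}(E)$ whose norm is uniformly controlled by $\|f\|_{\mathcal{H}(b)}$. Once that construction is in place, taking the infimum over all admissible $f$ will immediately yield the stated inequality with a universal constant~$A$.

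The construction I propose is the following: given $f\in\mathcal{H}(b)$ with $|f^*|\geq 1$ a.e. on an open neighborhood $V$ of $E$, let $g$ be the outer function whose boundary modulus satisfies $|g^*|=\min(|f^*|,1)$ a.e. on $\mathbb{T}$. Since $\log|g^*|\leq 0\in L^1(\mathbb{T})$, such an outer $g$ exists, belongs to $H^\infty$, and $\|g\|_\infty\leq 1$. By construction $|g^*|=1$ a.e. on $V$, so $g$ is admissible for $c_{b,2}(E)$. The heart of the proof is then the norm comparison $\|g\|_{\mathcal{H}(b)}^2 \leq A\|f\|_{\mathcal{H}(b)}^2$ for a constant $A$ independent of $f$. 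Using the hypothesis $\mathcal{H}(b)=\mathcal{D}_\mu$ and the closed graph theorem, the norms are equivalent, so it suffices to establish the analogous estimate $\|g\|_{\mathcal{D}_\mu}^2 \leq C\|f\|_{\mathcal{D}_\mu}^2$. The $H^2$-part is free from $|g^*|\leq |f^*|$; the substance lies in the harmonic Dirichlet integral.

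For the latter I invoke the Richter--Sundberg disintegration
\[
\mathcal{D}_\mu(g)=\int_{\mathbb{T}}\mathcal{D}_\zeta(g)\operatorname{d}\!\mu(\zeta),
\]
and reduce the problem to the pointwise truncation estimate $\mathcal{D}_\zeta(g)\leq C\mathcal{D}_\zeta(f)$ for $\mu$-a.e.~$\zeta$. To handle this I factor $f=IF$ with $I$ inner and $F$ outer, observe $|F^*|=|f^*|$ on $\mathbb{T}$ so that $g$ is the outer function with $|g^*|=\min(|F^*|,1)$, and then compare $\mathcal{D}_\zeta(g)$ to $\mathcal{D}_\zeta(F)$ using the outer-function representation $\log|g(z)|=\int_{\mathbb{T}} P_z(\lambda)\log|g^*(\lambda)|\operatorname{d}\!m(\lambda)$ together with the fact that $t\mapsto\min(t,1)$ is $1$-Lipschitz on $[0,\infty)$. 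A separate elementary estimate for the inner factor (inner functions preserving the harmonic Dirichlet integral up to bounded factors, since $\mathcal{H}(b)=\mathcal{D}_\mu$ is continuously contained in $H^2$) then yields $\mathcal{D}_\zeta(F)\lesssim\mathcal{D}_\zeta(f)$.

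The main obstacle is precisely this pointwise truncation estimate for outer functions in the local Dirichlet spaces. The subtlety is that although $|g^*|\leq |f^*|$ on $\mathbb{T}$, the functions $g$ and $f$ may have unrelated arguments, so the comparison $\mathcal{D}_\zeta(g)\leq C\mathcal{D}_\zeta(f)$ does not follow trivially from a pointwise inequality of difference quotients. A direct way to overcome this is to express $\mathcal{D}_\zeta$ via the boundary modulus for outer functions (Carleson-type integral formulas) and then use the contractivity of the scalar truncation on $\log$-moduli combined with the Poisson representation. Once the pointwise bound is in hand, integration against $\mu$ gives $\mathcal{D}_\mu(g)\lesssim \mathcal{D}_\mu(f)$, hence $\|g\|_{\mathcal{D}_\mu}^2 \lesssim \|f\|_{\mathcal{D}_\mu}^2$, and taking the infimum over $f$ finishes the proof with $A$ depending only on $b$, $\mu$, and the constants in the equivalence $\mathcal{H}(b)=\mathcal{D}_\mu$.
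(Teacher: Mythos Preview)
Your construction is exactly the one the paper uses: replace $f$ by the outer function $g$ with boundary modulus $\min(|f^*|,1)$, observe that $g$ is admissible for $c_{b,2}(E)$, and show $\|g\|_{\mathcal{D}_\mu}\leq\|f\|_{\mathcal{D}_\mu}$. The paper, however, does not attempt to reprove the contraction step; it simply invokes two standard results from the Dirichlet-space literature (Corollary~7.6.2 and Theorem~7.5.2 of the El-Fallah--Kellay--Mashreghi--Ransford primer): passing to the outer part does not increase $\|\cdot\|_{\mathcal{D}_\mu}$, and neither does the $\wedge 1$ truncation of an outer function. Both hold with constant~$1$, so no extra multiplicative constant beyond the norm equivalence $\mathcal{H}(b)\simeq\mathcal{D}_\mu$ is needed.

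Where your proposal stumbles is precisely in the attempted re-derivation of those two facts. Your justification of $\mathcal{D}_\zeta(F)\lesssim\mathcal{D}_\zeta(f)$ via ``inner functions preserving the harmonic Dirichlet integral up to bounded factors, since $\mathcal{H}(b)=\mathcal{D}_\mu$ is continuously contained in $H^2$'' is not a valid argument and has nothing to do with $\mathcal{H}(b)$; the correct statement (Richter--Sundberg) is the exact inequality $\mathcal{D}_\zeta(F)\leq\mathcal{D}_\zeta(IF)$, coming from an explicit nonnegative defect term. For the truncation $\mathcal{D}_\zeta(F\wedge 1)\leq\mathcal{D}_\zeta(F)$ you only gesture at Carleson-type formulas and the Lipschitz property of $t\mapsto\min(t,1)$; this is the genuinely nontrivial ingredient, and the Lipschitz bound on the modulus alone does not control the oscillation of the outer function's argument. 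Both inequalities are true theorems, but neither is proved in your write-up. Once you replace the hand-waving by citations, your argument coincides with the paper's.
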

\begin{proof}
Let $E\subset \mathbb{T}$. Just in this proof and to save space, we write a.e.n. $E$ for ‘‘almost everywhere in a neighborhood of the set $E$".
We first note that
\begin{align}
\nonumber c_{b,2}(E) &= \inf\{\|f\|^2_{\mathcal{H}(b)} : f \in \mathcal{H}(b), |f| = 1 \text{ a.e.n. } E\}\\
 &\lesssim\inf\{\|f\|^2_{\mathcal{D}_{\mu}} : f \in \mathcal{H}(b),|f| = 1 \text{ a.e.n. }E\}.
 \label{Expr1}
\end{align}
Using successively Corollary 7.6.2 and Theorem 7.5.2 in \cite{primer-fkmr}, we have
\[
\|f\|^2_{\mathcal{D}_\mu} \geq  \|f_o\|^2_{\mathcal{D}_\mu} \geq  \|f_o\wedge 1\|^2_{\mathcal{D}_\mu},
\]
where $f_o$ is the outer part of $f$ and $f_o\wedge 1$ is the outer function with boundary values
\[
|(f_o \wedge 1)(e^{it})|=\min(|f(e^{it})|,1).
\]
From the above inequalities it follows that in both infima $c_{b,1}$ and $c_{b,2}$
we can replace $f$ by its outer part, which conserves the condition on the neighborhood of $E$.
 We now get
\begin{align*}
c_{b,1}(E) &\asymp  \inf\{\|f\|^2_{\mathcal{D}_\mu} : f \in \mathcal{H}(b),|f| \ge 1 \text{ a.e.n. }E\}\\
&=\inf\{\|f_o\|^2_{\mathcal{D}_\mu} : f_o \in \mathcal{H}(b) \text{ outer },|f_o| \ge 1 \text{ a.e.n. }E\}\\
&\geq\inf\{\|f_o\wedge 1\|^2_{\mathcal{D}_\mu} : f_o\in \mathcal{H}(b),|f_o| \ge 1 \text{ a.e.n. }E\}\\
&= \inf\{\|f_o\wedge 1\|^2_{\mathcal{D}_\mu} : f_o\in \mathcal{H}(b),|f_o\wedge 1| = 1 \text{ a.e.n. }E\}\\
&\geq \inf\{\|g\|^2_{\mathcal{D}_\mu} : g\in \mathcal{H}(b),|g| = 1 \text{ a.e.n. }E\},
\end{align*}
where in the last inequality we have used the inclusion between the two sets involved in the infima.
We recognize the expression in \eqref{Expr1}.
\end{proof}

By applying Proposition \ref{P:cyclicity} to Theorem 3.6 of \cite{FricainGrivaux}, we are able to prove the following theorem.
\begin{theorem}\label{zeta}
Let $b \in H^\infty$ with $\|b\|_{H^\infty}=1$ and $\mu$ a finite measure on $\mathbb{T}$. Assume that $\mathcal{H}(b)=\mathcal{D}_\mu$ and $\zeta \in \mathbb{T}$. The following properties are equivalent:
\begin{itemize}
\item[i)] $k^b_\zeta$ belongs to $\mathcal{H}(b)$.
\item[ii)] The polynomial $z-\zeta$ is not cyclic in $\mathcal{H}(b)$.
\item[iii)] $c_{b,1}(\zeta)>0$.
\item[iv)] $c_{b,2}(\zeta)>0$.
\end{itemize}
\end{theorem}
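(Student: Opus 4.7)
The plan is to reduce the four-way equivalence to two separate chunks: a three-way equivalence that already exists in the literature, and a single extra equivalence that follows from the preceding Proposition~\ref{P:cyclicity}. Specifically, under the assumption $\mathcal{H}(b)=\mathcal{D}_\mu$, Theorem~3.6 of \cite{FricainGrivaux} gives the equivalences (i) $\iff$ (ii) $\iff$ (iii) more or less for free, since the hypotheses of that theorem are precisely that $\mathcal{H}(b)$ coincides with a harmonically weighted Dirichlet space. So the only new content to establish is that (iii) and (iv) are equivalent under our hypothesis.

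For the equivalence (iii) $\iff$ (iv), one direction is trivial: the inequality
\[
c_{b,1}(E) \leq c_{b,2}(E)
\]
holds by definition for every $E\subset\mathbb{T}$, since the admissible class for $c_{b,2}$ is contained in the admissible class for $c_{b,1}$. Applied to $E=\{\zeta\}$, this gives (iv) $\implies$ (iii) unconditionally. For the converse direction, Proposition~\ref{P:cyclicity} provides a constant $A>0$ such that
\[
c_{b,2}(E) \leq A\,c_{b,1}(E)
\]
for every $E\subset\mathbb{T}$ whenever $\mathcal{H}(b)=\mathcal{D}_\mu$. Specializing to the singleton $E=\{\zeta\}$, this immediately yields (iii) $\implies$ (iv).

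Combining these two blocks closes the cycle: (i) $\iff$ (ii) $\iff$ (iii) by the cited Fricain--Grivaux result, and (iii) $\iff$ (iv) by the comparability of the capacities in our setting. There is essentially no obstacle in this argument beyond verifying that the hypotheses of Theorem~3.6 of \cite{FricainGrivaux} are met; the genuine work lies upstream, first in establishing the equality $\mathcal{H}(b)=\mathcal{D}_\mu$ and then in Proposition~\ref{P:cyclicity} itself, which is where the nontrivial comparison between the two capacities is obtained via truncation arguments involving the outer part $f_o$ and the function $f_o\wedge 1$.
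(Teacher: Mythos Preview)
Your approach is essentially the one the paper outlines in the sentence preceding the theorem (``By applying Proposition~\ref{P:cyclicity} to Theorem~3.6 of \cite{FricainGrivaux}\ldots''); the paper's formal proof simply defers to Lemma~3.2 of \cite{ELFALLAH20163262}, but the content is the same.

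There is, however, a logical slip in your handling of (iii) $\iff$ (iv): you have the two directions swapped. The trivial inequality $c_{b,1}(\zeta)\le c_{b,2}(\zeta)$ yields (iii) $\Rightarrow$ (iv), since $c_{b,1}(\zeta)>0$ forces $c_{b,2}(\zeta)\ge c_{b,1}(\zeta)>0$; it does \emph{not} give (iv) $\Rightarrow$ (iii). Conversely, Proposition~\ref{P:cyclicity} gives $c_{b,2}(\zeta)\le A\,c_{b,1}(\zeta)$, from which $c_{b,2}(\zeta)>0$ forces $c_{b,1}(\zeta)>0$, i.e.\ (iv) $\Rightarrow$ (iii). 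The equivalence survives the swap, so the argument is correct once the labels are fixed.
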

\begin{proof}
For the proof, see Lemma 3.2 of \cite{ELFALLAH20163262}.
\end{proof}

\begin{proof}[Proof of Theorem \ref{T:main3}]
We note that a function $f \in \mathcal{H}(b)$ is cyclic if and only if it is cyclic in $\mathcal{D}_\mu$ as well. Therefore, we apply Theorem 1 of \cite{ELFALLAH20163262}. We note that due to Proposition \ref{P:cyclicity}, as definition of $\mathcal{H}(b)$-capacity, we can consider both $c_{b,1}$ and $c_{b,2}$.
\end{proof}

\section*{Acknowledgements}

The first two authors are members of Gruppo Nazionale per l'Analisi Matematica, la Probabilità e le loro Applicazioni (GNAMPA) of Istituto Nazionale di Alta Matematica (INdAM). The first author was partially supported by PID2021-123405NB-I00 by the Ministerio de Ciencia e Innovación. The third author was partially supported by the exchange programme FRQ-CRM-CNRS and by ANR-24-CE40-5470. The fourth author was supported by the Discovery grant of NSERC and the Canada Research Chairs program.

\bibliographystyle{plain}
\bibliography{mybibliography}

\bigskip

\end{document}